\documentclass{article}

\usepackage{amsthm}
\usepackage{amsmath}
\usepackage{amssymb}
\usepackage{mathrsfs}
\usepackage{dsfont}
\usepackage[all]{xy}
\usepackage{tikz-cd}
\usepackage{adjustbox}
\usepackage{enumitem}
\usepackage{newclude}
\usepackage{hyperref}

\newcommand{\yo}{\text{\usefont{U}{min}{m}{n}\symbol{'210}}}

\DeclareFontFamily{U}{min}{}
\DeclareFontShape{U}{min}{m}{n}{<-> udmj30}{}

\theoremstyle{definition}
\newtheorem{Definition}{Definition}[subsection]

\theoremstyle{plain}
\newtheorem{Theorem}[Definition]{Theorem}

\theoremstyle{plain}
\newtheorem{Proposition}[Definition]{Proposition}

\theoremstyle{plain}
\newtheorem{Lemma}[Definition]{Lemma}

\theoremstyle{plain}
\newtheorem{Corollary}[Definition]{Corollary}

\theoremstyle{definition}
\newtheorem{Example}[Definition]{Example}

\theoremstyle{definition}
\newtheorem{Question}[Definition]{Question}

\theoremstyle{remark}
\newtheorem{Remark}[Definition]{Remark}

\theoremstyle{plain}
\newcommand{\thistheoremname}{}
\newtheorem*{genericthm*}{\thistheoremname}
\newenvironment{namedthm*}[1]
  {\renewcommand{\thistheoremname}{#1}%
   \begin{genericthm*}}
  {\end{genericthm*}}

\title{Compact Semisimple 2-Categories}
\author{Thibault D. Décoppet}

\begin{document}

\bibliographystyle{alpha}

     \maketitle
    \hspace{1cm}
    \begin{abstract}
        Working over an arbitrary field, we define compact semisimple 2-categories, and show that every compact semisimple 2-category is equivalent to the 2-category of separable module 1-categories over a finite semisimple tensor 1-category. Then, we prove that, over an algebraically closed field or a real closed field, compact semisimple 2-categories are finite. Finally, we explain how a number of key results in the theory of finite semisimple 2-categories over an algebraically closed field of characteristic zero can be generalized to compact semisimple 2-categories.
    \end{abstract}
    
\tableofcontents

\section*{Introduction}
\addcontentsline{toc}{section}{Introduction}

\subsection*{Overview}
\addcontentsline{toc}{subsection}{Overview}

The theory of fusion 1-categories over an algebraically closed field of characteristic zero is very well-understood (for instance, see \cite{EGNO}). Over an algebraically closed field of positive characteristic, the theory becomes more complicated. One example of this complication is the fact that the Drinfel'd center of an arbitrary fusion 1-category is no longer necessarily semisimple (see \cite{DSPS13}). In order to remedy this problem, one can define a separable fusion 1-category to be a fusion 1-category whose Drinfel'd center is semisimple. As explained in \cite{DSPS13}, this definition makes sense over any perfect field, and this class of fusion 1-categories is particularly well-behaved (see \cite{ENO} and \cite{Et}). Moreover, separable tensor 1-categories over a perfect field are fully dualizable objects of an appropriate symmetric monoidal 3-category $\mathbf{TC}^{ss}$ of finite semisimple tensor 1-categories (see \cite{DSPS13}). A key ingredient in the definition of this 3-category is the notion of separable module 1-category, a module 1-category whose 1-category of module endomorphisms is semisimple; this definition of a separable module 1-category makes sense over any field. Now, over an arbitrary perfect field, the $Hom$-2-categories of $\mathbf{TC}^{ss}$ have received little attention. This article aims to fill this gap, by studying the 2-categories of separable module 1-categories over finite semisimple tensor 1-categories.

Taking a slightly different perspective, this article should also be viewed as generalizing the theory of finite semisimple 2-categories introduced in \cite{DR} over an algebraically closed field of characteristic zero. In particular, we wish to categorify the notion of a finite semisimple 1-category over an arbitrary field. At first glance, one might expect the definition of finite semisimple semisimple 2-category to translate directly to arbitrary fields. However, this is not the case. More precisely, we certainly expect the 2-categories of separable module 1-categories over finite semisimple tensor 1-categories to give us examples of the desired categorified notion. But, as we will see, these 2-categories are rarely finite. This explains why we introduce the notion of a compact semisimple 2-category, which relaxes slightly the finiteness assumptions to accommodate our desired examples. More precisely, the existence of non-zero 1-morphisms organizes the set of simple objects of a semisimple 2-categories into ``connected components", and we say that a semisimple 2-category is compact if it has only finitely many ``connected components".

Perhaps the most important reason for introducing compact semisimple 2-categories is that this brings us closer to an abstract definition of finite tensor 2-categories. Namely, due to the existence of non-separable finite semisimple tensor 1-categories, the theory of finite semisimple tensor 1-categories over a field of characteristic $p$ is indissociable from the broader theory of finite tensor 1-categories (see \cite{EGNO}). Given that compact semisimple 2-categories categorify finite semisimple 1-categories over an arbitrary field, we can define compact semisimple tensor 2-categories by categorifying the notion of a finite semisimple tensor 1-category over an arbitrary field. Now, it is conjectured (see \cite{DR} and \cite{JFR}) that fusion 2-categories over an algebraically closed field of characteristic zero are fully dualizable objects of an appropriate symmetric monoidal 4-category. Some progress in this direction has been made (see \cite{D3}, \cite{D4}, and \cite{D6}). As compact semisimple tensor 2-categories generalize fusion 2-categories, and, by analogy with the decategorified situation recalled above, and described in details in \cite{DSPS13}, we expect that over a perfect field certain compact semisimple tensor 2-categories satisfying an additional separability axiom are fully dualizable objects of an appropriate symmetric monoidal 4-category generalizing the main result of \cite{BJS} (see also \cite{BJSS}). As a consequence, we could then use the cobordism hypothesis (see \cite{L}) to define interesting 4-dimensional TQFT's. Finally, over algebraically closed fields and real closed fields, we will see that compact semisimple 2-categories are in fact finite. Therefore, over such fields, compact semisimple tensor 2-categories are really fusion 2-categories. It is therefore natural to attempt to generalize to these more general fields the state-sum invariant described in \cite{DR}, which takes as input a fusion 2-category over an algebraically closed field of characteristic zero.

\subsection*{Results}
\addcontentsline{toc}{subsection}{Results}

We begin section \ref{sec:compact} by adapting to an arbitrary field $\mathds{k}$ the definition of a semisimple 2-category given in \cite{DR} for an algebraically closed field. We also show that the fundamental results derived in \cite{DR} hold in our more general context. Then, we show that given a finite semisimple tensor 1-category $\mathcal{C}$, its 2-category of separable right $\mathcal{C}$-module 1-categories $\mathbf{Mod}(\mathcal{C})$ is semisimple. After having done so, we abstract the properties of the 2-category $\mathbf{Mod}(\mathcal{C})$ in order to obtain the definition of a compact semisimple 2-category. Namely, given a semisimple 2-category $\mathfrak{C}$, we can define $\pi_0(\mathfrak{C})$ as the quotient of the set of simple objects of $\mathfrak{C}$ under the relation identifying two simple objects if there is a non-zero 1-morphism between them. A semisimple 2-category $\mathfrak{C}$ is called compact if $\pi_0(\mathfrak{C})$ is a finite set. Further, we prove that compact semisimple 2-categories correspond to finite semisimple tensor 1-categories in the following sense.

\begin{namedthm*}{Theorem \ref{thm:semisimplemodule}}
Let $\mathfrak{C}$ be a compact semisimple 2-category. Then, there exists a finite semisimple tensor 1-category $\mathcal{C}$ such that $\mathfrak{C}\simeq\mathbf{Mod}(\mathcal{C})$.
\end{namedthm*}

\noindent We end the first section by introducing the notion of a locally separable compact semisimple 2-category, provided $\mathds{k}$ is perfect, which abstracts the features of the semisimple 2-category $\mathbf{Mod}(\mathcal{C})$ when $\mathcal{C}$ is a separable tensor 1-category.

In section \ref{sec:finite}, we investigate the conditions that are necessary to ensure that a compact semisimple 2-category is finite. We begin by showing that over certain base fields, this can never be the case.

\begin{namedthm*}{Proposition \ref{prop:infiniteseparableclosure}}
Let $\mathds{k}$ be a field whose separable closure $\mathds{k}^{sep}$ is an infinite extension. Then, given any finite semisimple tensor 1-category $\mathcal{C}$ over $\mathds{k}$, the compact semisimple 2-category $\mathbf{Mod}(\mathcal{C})$ has infinitely many equivalence classes of simple objects.
\end{namedthm*}

\noindent Now, it happens that, if $\mathds{k}$ is perfect, requiring that $\mathds{k}^{sep}$ is a finite extension of $\mathds{k}$, forces $\mathds{k}$ to be either an algebraically closed field or a real closed field. In this case, we can prove that all compact semisimple 2-categories are finite.

\begin{namedthm*}{Theorem \ref{thm:modulefinitesemisimple}}
Let $\mathcal{C}$ be a multifusion 1-category over an algebraically closed field or a real closed field. The 2-category $\mathbf{Mod}(\mathcal{C})$ of separable right $\mathcal{C}$-module 1-categories is a finite semisimple 2-category.
\end{namedthm*}

\begin{namedthm*}{Corollary \ref{cor:separablefinite}}
Over an algebraically closed field or a real closed field, a semisimple 2-category is finite if and only if it is compact.
\end{namedthm*}

\noindent The underlying idea of our proof comes from section 9.1 of \cite{EGNO}, where these results are established over an algebraically closed field of characteristic zero. Namely, our proof begins by translating the above problem to a question about the deformations of suitable monoidal functors between finite semisimple 1-categories. Then, a careful analysis of the vanishing of Davydov-Yetter cohomology combined with various results on the openness of orbits under an algebraic group action shows that these objects do not admit any non-trivial deformations, giving the desired result.

Further, the same ideas can be used to show that separable multifusion 1-categories do not admit non-trivial deformations. Our original contribution is the case of multifusion 1-categories over real closed fields.

\begin{namedthm*}{Theorem \ref{thm:nodeformationmulti}}
Let $\mathcal{C}$ be a separable multifusion 1-category over an algebraically closed field or a real closed field. Then, $\mathcal{C}$ has no non-trivial deformations. Consequently, the number of equivalence classes of separable multifusion 1-categories with a fixed Grothendieck ring is finite.
\end{namedthm*}

\noindent Over an algebraically closed field of characteristic zero, the above result was first proven in \cite{ENO}. Moreover, over an algebraically closed field of positive characteristic, the statement is given in section 9 of \cite{ENO}, and a proof can be obtained using the techniques developed in their article. The second section then ends by explaining how to use our techniques to analyze the field of definition of a separable multifusion 1-category over an algebraically closed field or a real closed field. In details, it is shown in \cite{EGNO} that a multifusion 1-category over an algebraically closed field of characteristic zero is defined over $\overline{\mathbb{Q}}$. We show that, over an algebraically closed field of positive characteristic $p$, separable multifusion 1-categories are always defined over $\overline{\mathbb{F}}_p$. Further, over a real closed field, separable multifusion 1-categories are defined over $\mathbb{R}^{alg}=\overline{\mathbb{Q}}\cap\mathbb{R}$.

Finally, section \ref{sec:additional} begins by showing that the main result of \cite{D1} can be generalized to compact semisimple 2-categories over arbitrary fields. Namely, if $\mathbf{TC}^{rss}$ denotes the 3-category of finite semisimple tensor 1-categories and right separable bimodule 1-categories between them, and $\mathbf{CSS2C}$ denotes the 3-category of compact semisimple 2-categories, we have the following theorem.

\begin{namedthm*}{Theorem \ref{thm:equivalence}}
Over an arbitrary field $\mathds{k}$, there is an equivalence of linear 3-categories $$\mathbf{Mod}(-):\mathbf{TC}^{rsep}\xrightarrow{\simeq} \mathbf{CSS2C}.$$
\end{namedthm*}

\noindent Provided $\mathds{k}$ is perfect, the above equivalence of 3-categories restricts to an equivalence between the full sub-3-category of $\mathbf{TC}^{rsep}$ on the separable tensor 1-categories, and the full sub-3-category of $\mathbf{CSS2C}$ on the locally separable compact semisimple 2-categories. In fact, locally separable compact semisimple 2-categories enjoy additional properties. In particular, we show that the Yoneda embedding of a locally separable compact semisimple 2-category $\mathfrak{C}$ is an equivalence, a result which was shown in \cite{DR} over an algebraically closed field of characteristic zero. More precisely, we show that the canonical embedding of $\mathfrak{C}$ into the 2-category of linear $\mathbf{2Vect}$-valued presheaves on $\mathfrak{C}$ is an equivalence. We move on to give the definition of a compact semisimple tensor 2-category over an arbitrary field, and explore its fundamental properties. We conclude by analyzing the example of the 2-category of separable right module 1-categories over the fusion 1-category of $\mathbb{Z}/p$-graded vector spaces over an arbitrary algebraically closed field.

\subsubsection*{Acknowledgments}

I am grateful to Christopher Douglas his supervison and the extensive feedback he provided when preparing this article. I would also like to thank Andr\'e Henriques and Christoph Weis for spotting a mistake in a draft of this paper. Finally, I am particularly indebted to the person who referred the article \cite{D3}, and whose comments prompted me to write this article in the first place.

\section{Compact Semisimple 2-Categories}\label{sec:compact}

Let $\mathds{k}$ be an arbitrary field. All of the categories we will consider are $\mathds{k}$-linear.

\subsection{Semisimple 2-Categories}

\begin{Definition}
Recall the following standard definitions:
\begin{itemize}
    \item An object of an abelian 1-category is called simple if it is not zero, and it has no nontrivial subobject. The endomorphism space of a simple object is a division algebra over $\mathds{k}$.
    \item An abelian 1-category is called semisimple provided that every object splits as a finite direct sum of simple objects.
    \item An abelian 1-category is called finite it is is equivalent to the category of finite-dimensional modules over a finite-dimensional $\mathds{k}$-algebra.
\end{itemize}
\end{Definition}

\begin{Remark}
There is an intrinsic characterization of finite 1-categories (see \cite{DSPS14}). In particular, such a 1-category has finitely many equivalence classes of simple objects, and finite-dimensional $Hom$-spaces.
\end{Remark}

\begin{Definition}
A semisimple 2-category is a locally semisimple $\mathds{k}$-linear 2-category that has right and left adjoints for 1-morphisms and that is Cauchy complete (see definition 1.2.1 of \cite{D1}).
\end{Definition}

\begin{Definition}\label{def:semisimple}
An object $X$ of a semisimple 2-category $\mathfrak{C}$ is called simple if its identity 1-morphism $Id_X$ is a simple object of $End_{\mathfrak{C}}(X)$.
\end{Definition}

The categorical Schur lemma (proposition 1.2.19 of \cite{DR}) continues to hold in our more general context.

\begin{Lemma}\label{lem:catschur}
Let $f:X\rightarrow Y$, and $g:Y\rightarrow Z$ be two nonzero 1-morphisms between simple object of a semisimple 2-category. Then, $g\circ f$ is nonzero.
\end{Lemma}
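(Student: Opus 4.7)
The plan is to categorify the classical proof of Schur's lemma, leveraging the existence of adjoints for 1-morphisms. Concretely, I will argue that $f$ embeds as a direct summand of $g^R\circ g\circ f$ for a chosen right adjoint $g^R$ of $g$; this suffices, because $g\circ f=0$ would then force $g^R\circ g\circ f=0$ and hence $f=0$, contradicting the hypothesis that $f$ is nonzero.

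To set this up, pick a right adjoint $g^R:Z\to Y$ of $g$, with unit $\eta:Id_Y\Rightarrow g^R\circ g$ and counit $\varepsilon:g\circ g^R\Rightarrow Id_Z$. The triangle identity $(\varepsilon*g)\circ(g*\eta)=id_g$ implies that $\eta$ must be a nonzero 2-morphism: otherwise $id_g$ would vanish, forcing $g=0$.

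Now $Id_Y$ is a simple object of the semisimple abelian 1-category $End_{\mathfrak{C}}(Y)$, and $\eta$ is a nonzero morphism from $Id_Y$ to $g^R\circ g$ inside this category. In any semisimple abelian 1-category over $\mathds{k}$, such a morphism is automatically a split monomorphism: decomposing the codomain into simple summands, each component of $\eta$ is a map between simple objects, and the 1-categorical Schur's lemma---valid over an arbitrary field $\mathds{k}$ because endomorphism rings of simples are division algebras---forces each such component to vanish or to be an isomorphism. Thus $Id_Y$ appears as a direct summand of $g^R\circ g$.

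Whiskering this splitting on the right by $f$ exhibits $f\cong Id_Y\circ f$ as a direct summand of $g^R\circ g\circ f$, which completes the argument. The only point requiring attention compared with the algebraically closed case treated in \cite{DR} is the 1-categorical Schur's lemma invoked in the previous paragraph; since $\mathds{k}$-linearity plays no role there beyond producing a division algebra of endomorphisms for each simple object, no new ideas beyond those of \emph{loc.\ cit.} are needed.
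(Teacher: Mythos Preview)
Your proof is correct and follows essentially the same approach as the paper: both use an adjoint of $g$ to exhibit $f$ as a retract of $g^{R}\circ g\circ f$ via a splitting of the (co)unit at the simple object $Id_Y$, then conclude by contraposition. The only cosmetic difference is that you work with the unit $\eta:Id_Y\Rightarrow g^{R}\circ g$ as a split monomorphism, whereas the paper phrases the same step via a section of the map $g^{*}\circ g\Rightarrow Id_Y$.
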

\begin{proof}
Let $g^*:Z\rightarrow Y$ be a right adjoint to $g$, and write $\epsilon:g^*\circ g \Rightarrow Id_Y$ for the counit. As $g$ is non-zero and $Id_Y$ is simple, there exists a section $\gamma$ of $\epsilon$. In particular, the composite 2-morphism $(\gamma \circ f)\cdot (\epsilon \circ f)$ is the identity on $f$, and factors through $g^*\circ g\circ f$. As $f$ is non-zero, this shows that $g\circ f$ is non-zero.
\end{proof}

In fact, it turns out that most of the theory of semisimple 2-categories developed in \cite{DR} holds over an arbitrary field. Let us make explicit the following results.

\begin{Lemma}\label{lem:decompositionprimer}
Let $C$ be an object of a semisimple 2-category $\mathfrak{C}$. Then, a decomposition $Id_C\cong \oplus_{i=1}^nf_i$ of $Id_C$ into a direct sum of non-zero 1-morphisms induces a decomposition $C\simeq \boxplus_{i=1}^nC_i$ of $C$ into a direct sum of non-zero objects $C_i$ of $\mathfrak{C}$.
\end{Lemma}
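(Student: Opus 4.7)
The plan is to use Cauchy completeness to split each summand $f_i$ of the identity 1-morphism into an object $C_i$, and then to verify that $C$ is the biproduct of the $C_i$.

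First I would record the structure 2-morphisms $\pi_i : Id_C \Rightarrow f_i$ and $\iota_i : f_i \Rightarrow Id_C$ witnessing $Id_C \cong \oplus_i f_i$, which satisfy the biproduct identities $\pi_i \cdot \iota_j = \delta_{ij} \mathrm{id}_{f_i}$ and $\sum_i \iota_i \cdot \pi_i = \mathrm{id}_{Id_C}$. Since $f_i$ is a retract of the monoidal unit of $End_{\mathfrak{C}}(C)$, it naturally acquires the structure of a condensation monad: the unit is $\pi_i$, the counit is $\iota_i$, and the multiplication and comultiplication are built from the unitor $Id_C \circ Id_C = Id_C$ together with horizontal composites of $\iota_i$ and $\pi_i$; the separability identity then reduces to $\pi_i \cdot \iota_i = \mathrm{id}_{f_i}$. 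By the Cauchy completeness of $\mathfrak{C}$ (Definition 1.2.1 of \cite{D1}), each such condensation monad splits through an object, producing $C_i$ together with 1-morphisms $e_i : C \to C_i$ and $r_i : C_i \to C$ satisfying $r_i \circ e_i \cong f_i$ and $e_i \circ r_i \cong Id_{C_i}$. Each $C_i$ is non-zero because $r_i \circ e_i \cong f_i$ is non-zero by hypothesis.

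I would then assemble $\Phi : \boxplus_i C_i \to C$ with components $r_i$ and $\Psi : C \to \boxplus_i C_i$ with components $e_i$. The composite $\Phi \circ \Psi$ is $\sum_i r_i \circ e_i \cong \sum_i f_i \cong Id_C$ by construction.

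The main obstacle will be establishing the reverse identity $\Psi \circ \Phi \cong Id_{\boxplus_i C_i}$, which amounts to the vanishing of the off-diagonal 1-morphisms $e_j \circ r_i$ for $i \neq j$. My plan is to horizontally pre-compose the decomposition $Id_C \cong \oplus_k f_k$ with $e_j$ to obtain $e_j \cong \oplus_k (e_j \circ f_k)$; since $e_j \circ f_j = e_j \circ r_j \circ e_j \cong e_j$, the Krull--Schmidt property of the semisimple Hom 1-category---recall that the paper's notion of semisimplicity requires every object to be a \emph{finite} direct sum of simples---forces $e_j \circ f_k \cong 0$ for $k \neq j$. It then follows that $e_j \circ r_i \cong (e_j \circ r_i) \circ (e_i \circ r_i) \cong (e_j \circ f_i) \circ r_i \cong 0$ for $i \neq j$, which completes the verification.
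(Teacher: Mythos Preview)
Your proposal is correct and follows essentially the same approach as the paper: the paper's proof consists of invoking theorem 3.1.7 of \cite{GJF} to identify Cauchy completeness with the splitting of separable monads, and then citing proposition 1.3.16 of \cite{DR}, whose argument is precisely the one you have written out---each summand $f_i$ of $Id_C$ is a separable (equivalently, condensation) monad which one splits to obtain $C_i$, and one then checks that the $C_i$ assemble into a direct sum decomposition of $C$. One small notational point: when you compute $\Phi\circ\Psi$, the expression $\sum_i r_i\circ e_i$ should really be the direct sum $\bigoplus_i r_i\circ e_i$ in the $Hom$-category, since 1-morphisms are not added but rather summed in the additive (semisimple) sense; with this reading, the identification with $\bigoplus_i f_i\cong Id_C$ is the one you want.
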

\begin{proof}
Observe that by theorem 3.1.7 of \cite{GJF}, a locally semisimple 2-category with adjoints is Cauchy complete if and only if it has direct sums and every separable monad as defined in \cite{DR} splits. Thus, our definition \ref{def:semisimple} is equivalent to definition 1.4.1 of \cite{DR} over an arbitrary field. Thus, the proof of proposition 1.3.16 of \cite{DR} applies in our context.
\end{proof}

\begin{Proposition}\label{prop:decomposition}
In a semisimple 2-category $\mathfrak{C}$, every object splits as a direct sum of simple objects. Further, this splitting is unique up to permutation and equivalence.
\end{Proposition}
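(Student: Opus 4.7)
The plan is to derive the existence of a simple decomposition from Lemma \ref{lem:decompositionprimer} together with local semisimplicity, and then to deduce uniqueness by a categorified Krull--Schmidt argument whose main input is the categorical Schur lemma \ref{lem:catschur}.

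For existence, given any object $C$ of $\mathfrak{C}$, local semisimplicity ensures that $Id_C$ splits in $End_\mathfrak{C}(C)$ as a finite direct sum $\bigoplus_{i=1}^n f_i$ of simple 1-morphisms. Lemma \ref{lem:decompositionprimer} then supplies an equivalence $C\simeq \boxplus_{i=1}^n C_i$ with each $C_i$ non-zero, and the induced equivalence $End_\mathfrak{C}(C)\simeq \bigoplus_{i,j} Hom_\mathfrak{C}(C_j,C_i)$ identifies the simple summand $f_i$ of $Id_C$ with $Id_{C_i}\in End_\mathfrak{C}(C_i)$. Thus each $Id_{C_i}$ is simple, so each $C_i$ is simple.

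For uniqueness, suppose $C\simeq \boxplus_{i=1}^n C_i\simeq \boxplus_{j=1}^m D_j$ are two splittings into simples, with associated inclusions and projections $\iota_i,\pi_i$ and $\iota'_j,\pi'_j$. Set $\alpha_{ij}:=\pi'_j\circ\iota_i$ and $\beta_{ji}:=\pi_i\circ\iota'_j$. Inserting $\sum_j \iota'_j\pi'_j\cong Id_C$ into the relation $Id_{C_i}\cong \pi_i\circ Id_C\circ \iota_i$ yields $Id_{C_i}\cong \bigoplus_j \beta_{ji}\circ\alpha_{ij}$ in $End_\mathfrak{C}(C_i)$. Simplicity of $Id_{C_i}$ forces exactly one summand to be non-zero, for some index $\sigma(i)$, and the relation $\beta_{\sigma(i),i}\circ\alpha_{i,\sigma(i)}\cong Id_{C_i}$ exhibits $C_i$ as a retract of $D_{\sigma(i)}$. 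Because the simple object $D_{\sigma(i)}$ is indecomposable (any nontrivial splitting of $D_{\sigma(i)}$ would contradict simplicity of $Id_{D_{\sigma(i)}}$ via Lemma \ref{lem:decompositionprimer}), the 1-morphism $\alpha_{i,\sigma(i)}$ must be an equivalence. Running the same argument starting from the $D_j$ side produces an inverse map $\tau$, so $\sigma$ is a bijection, and the decomposition is unique up to permutation and equivalence.

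The main obstacle is the bookkeeping in the uniqueness step, namely verifying that $Id_{C_i}\cong \bigoplus_j \beta_{ji}\circ\alpha_{ij}$ really is a direct sum decomposition inside the semisimple 1-category $End_\mathfrak{C}(C_i)$ (rather than merely an isomorphism obtained by abstract composition), so that simplicity of $Id_{C_i}$ genuinely collapses the sum to a single summand. Once this is clean, the remaining steps are routine categorifications of the classical argument.
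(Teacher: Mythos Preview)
Your existence argument is the same as the paper's. For uniqueness, however, the paper takes a shorter route: rather than running a Krull--Schmidt argument directly on the objects of $\mathfrak{C}$, it observes that decompositions $C \simeq \boxplus_i C_i$ into simples correspond bijectively to decompositions $Id_C \cong \oplus_i f_i$ into simples of the semisimple 1-category $End_\mathfrak{C}(C)$, and then simply invokes the ordinary (1-categorical) uniqueness of simple decompositions there. This reduction sidesteps all of the 2-categorical bookkeeping you carry out by hand.

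Your direct approach does work, but the step you flag as the ``main obstacle'' is not the delicate one: composition of 1-morphisms in a linear 2-category is additive, so $Id_{C_i} \cong \bigoplus_j \beta_{ji}\alpha_{ij}$ is automatically a genuine direct-sum decomposition in $End_\mathfrak{C}(C_i)$. The step that actually needs care is your assertion that ``$C_i$ is a retract of the indecomposable $D_{\sigma(i)}$, hence $\alpha_{i,\sigma(i)}$ is an equivalence.'' In a 2-category, a retract in your sense (a pair $\alpha,\beta$ with $\beta\alpha \cong Id$) does not obviously yield a direct-sum splitting of the target: the ``idempotent'' $\alpha\beta$ is a 1-morphism rather than a subobject of $Id_{D_{\sigma(i)}}$, and there is no ``$1 - \alpha\beta$'' available to play the role of a complementary summand. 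Fortunately, your own final sentence already contains the fix. Running the symmetric argument first gives $\tau$ with $\alpha_{\tau(j),j}\beta_{j,\tau(j)} \cong Id_{D_j}$; then Lemma~\ref{lem:catschur} applied to the nonzero 1-morphisms $\alpha_{i,\sigma(i)}$ and $\beta_{\sigma(i),i}$ shows $\alpha_{i,\sigma(i)}\beta_{\sigma(i),i} \neq 0$, which forces $\tau(\sigma(i)) = i$. At that point $\alpha_{i,\sigma(i)}$ has $\beta_{\sigma(i),i}$ as a two-sided quasi-inverse and is therefore an equivalence. You should reorder the argument so that this conclusion comes \emph{after} establishing $\tau\sigma = \mathrm{id}$ via Schur, not before.
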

\begin{proof}
Let $C$ be an object of $\mathfrak{C}$. Then, as $End_{\mathfrak{C}}(C)$ is a semisimple 1-category, we can pick $Id_C\cong \oplus_{i=1}^nf_i$ a decomposition of $Id_C$ into a direct sum of simple objects of $End_{\mathfrak{C}}(C)$. Lemma \ref{lem:decompositionprimer} then yields the first part of the result. To get the second part, observe that, in fact, splittings $C\simeq \boxplus_{i=1}^nC_i$ of $C$ into a direct sum of simple objects $C_i$ up to equivalence, correspond to splittings $Id_C\cong \oplus_{i=1}^nf_i$ of $Id_C$ into a direct sum of simple objects of $End_{\mathfrak{C}}(C)$. Thence, the second part of the result follows from the uniqueness of the splitting of an object as a direct of simple objects in a semisimple 1-category.
\end{proof}

Finally, over algebraically closed fields of characteristic zero, the following objects play a central role in \cite{DR}. We shall say more about their existence over arbitrary fields in section \ref{sec:finite}.

\begin{Definition}\label{def:finitesemisimple}
A semisimple 2-category is finite if it is locally finite (i.e. $Hom$-categories are finite 1-categories), and has finitely many equivalence classes of simple objects.
\end{Definition}

\subsection{2-Categories of Separable Module 1-Categories}\label{sub:sepmod}

Recall the following definitions from \cite{DSPS13}:
\begin{itemize}
    \item A tensor 1-category is a linear 1-category equipped with a rigid monoidal structure.
    \item Over an algebraically closed field or a real closed field, a multifusion 1-category is a finite semisimple tensor 1-category. A fusion 1-category is a multifusion 1-category whose monoidal unit is a simple object.
    \item Let $\mathcal{C}$ be a monoidal 1-category. A right $\mathcal{C}$-module 1-category is a 1-category $\mathcal{M}$ equipped with a bilinear right action $\mathcal{M}\times \mathcal{C}\rightarrow \mathcal{M}$ that is suitably coherent (see \cite{DSPS13} definition 2.2.6).
\end{itemize}

Now, let us fix $\mathcal{C}$ a finite semisimple tensor 1-category. Given a finite semisimple right $\mathcal{C}$-module 1-category $\mathcal{M}$, Ostrik's theorem (in the generality of theorem 2.18 of \cite{DSPS14}) tells us that there is an algebra object $A$ in $\mathcal{C}$, and an equivalence of right $\mathcal{C}$-module 1-categories between $\mathcal{M}$ and $\mathrm{Mod}_{\mathcal{C}}(A)$ the 1-category of left $A$-modules in $\mathcal{C}$.

\begin{Definition}
A finite semisimple right $\mathcal{C}$-module 1-category $\mathcal{M}$ is called separable if the algebra $A$ is separable. We then say that $\mathcal{M}$ is a separable right $\mathcal{C}$-module 1-category.
\end{Definition}

\begin{Remark}
There is also an intrinsic characterization of separable right $\mathcal{C}$-module 1-categories. Namely, it is shown in theorem 1.5.4 of \cite{DSPS13} that a finite semisimple right $\mathcal{C}$-module 1-category $\mathcal{M}$ is separable if and only if the 1-category $\mathrm{Fun}_{\mathcal{C}}(\mathcal{M},\mathcal{M})$ of linear right $\mathcal{C}$-module functors $\mathcal{M}\rightarrow \mathcal{M}$ is finite semisimple.
\end{Remark}

\begin{Lemma}\label{lem:Homfinitesemisimple}
Let $\mathcal{M}$ and $\mathcal{N}$ be two separable right $\mathcal{C}$-module 1-categories. Then, $\mathrm{Fun}_{\mathcal{C}}(\mathcal{M},\mathcal{N})$ is finite semisimple.
\end{Lemma}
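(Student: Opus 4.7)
My plan is to reduce the mixed $\mathrm{Fun}$-category case to the endo-case that already appears in the intrinsic characterization (the remark preceding the lemma), by exploiting the fact that a direct sum of separable module 1-categories is again separable.

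First, I would invoke Ostrik's theorem to present $\mathcal{M}\simeq \mathrm{Mod}_{\mathcal{C}}(A)$ and $\mathcal{N}\simeq \mathrm{Mod}_{\mathcal{C}}(B)$ for separable algebra objects $A,B$ in $\mathcal{C}$. Next, I would form the direct sum $\mathcal{M}\boxplus\mathcal{N}$ as a right $\mathcal{C}$-module 1-category. Under Ostrik's equivalence this corresponds to $\mathrm{Mod}_{\mathcal{C}}(A\oplus B)$, and the separability of $A\oplus B$ follows immediately from the separability of $A$ and $B$ (a multiplication section for $A\oplus B$ is built from the multiplication sections for the two factors, viewing $A\oplus B$ as an internal algebra with no interaction between its components). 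Hence $\mathcal{M}\boxplus\mathcal{N}$ is a separable right $\mathcal{C}$-module 1-category.

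Applying the intrinsic characterization of separability (the remark above the lemma), the 1-category $\mathrm{Fun}_{\mathcal{C}}(\mathcal{M}\boxplus\mathcal{N},\mathcal{M}\boxplus\mathcal{N})$ is therefore finite semisimple. On the other hand, by the universal property of the direct sum of module 1-categories, this functor 1-category decomposes as
\[
\mathrm{Fun}_{\mathcal{C}}(\mathcal{M},\mathcal{M})\oplus \mathrm{Fun}_{\mathcal{C}}(\mathcal{M},\mathcal{N})\oplus \mathrm{Fun}_{\mathcal{C}}(\mathcal{N},\mathcal{M})\oplus \mathrm{Fun}_{\mathcal{C}}(\mathcal{N},\mathcal{N}).
\]
Since any direct summand of a finite semisimple 1-category is itself finite semisimple, we conclude in particular that $\mathrm{Fun}_{\mathcal{C}}(\mathcal{M},\mathcal{N})$ is finite semisimple.

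I do not anticipate a serious obstacle: the only point to verify carefully is that $\mathcal{M}\boxplus\mathcal{N}$ is a separable module 1-category, for which the algebra description makes the argument essentially formal. The decomposition of the functor category is a categorical analogue of the standard block decomposition of $\mathrm{Hom}(X\oplus Y, X\oplus Y)$, and the fact that summands of finite semisimple 1-categories are finite semisimple is standard.
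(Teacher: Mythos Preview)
Your argument is correct and takes a genuinely different route from the paper. The paper proceeds by identifying $\mathrm{Fun}_{\mathcal{C}}(\mathcal{M},\mathcal{N})$ with the bimodule category $\mathrm{Bimod}_{\mathcal{C}}(A,B)$ via results of \cite{DSPS13}, and then proves directly that every $A$--$B$-bimodule $M$ is projective by exhibiting it as a summand of the free bimodule $A\otimes M\otimes B$ using the separability sections $\Delta_A$ and $\Delta_B$. Your approach instead reduces the mixed case to the endo case already recorded in the remark (theorem 1.5.4 of \cite{DSPS13}) by passing to $\mathcal{M}\boxplus\mathcal{N}\simeq\mathrm{Mod}_{\mathcal{C}}(A\oplus B)$ and extracting the off-diagonal block. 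This is slicker and avoids redoing any projectivity computation, at the cost of invoking the intrinsic characterization as a black box; the paper's argument is more explicit and yields the bimodule description of the functor category along the way, which is useful later. Both rely on the same underlying separability data, just packaged differently.
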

\begin{proof}
Let $A$, $B$ be separable algebra in $\mathcal{C}$ such that $\mathrm{Mod}_{\mathcal{C}}(A)\simeq \mathcal{M}$, and $\mathrm{Mod}_{\mathcal{C}}(B)\simeq \mathcal{N}$ as right $\mathcal{C}$-module 1-categories. Theorem 2.2.17, corollary 2.4.11, and corollary 2.4.17 of \cite{DSPS13} prove that $$\mathrm{Fun}_{\mathcal{C}}(\mathcal{M},\mathcal{N})\simeq \mathrm{Bimod}_{\mathcal{C}}(A,B).$$ Furthermore, the right hand-side is a finite category, so it only remains to prove that every object is projective. Let $M$ be an $A$-$B$-bimodule in $\mathcal{C}$. We write $m_A$ for the multiplication of $A$, and $\Delta_A$ for its splitting as a map of $A$-$A$-bimodules. Similarly, we write $m_B$ for the multiplication of $B$, and $\Delta_B$ for its splitting as a map of $B$-$B$-bimodules. The bimodule map $$A\otimes_A M\otimes_B B\xrightarrow{\Delta_A\otimes_A Id_M\otimes_B \Delta_B} (A\otimes A)\otimes_A M\otimes_B (B\otimes B)$$ is a splitting for the canonical bimodule map $$(A\otimes A)\otimes_A M\otimes_B (B\otimes B) \rightarrow A\otimes_A M\otimes_BB.$$ Thus, we find that $M$ is a direct summand of a free bimodule, which prove the claim.
\end{proof}

We are now ready to given our first examples of semisimple 2-categories.

\begin{Proposition}\label{prop:modulesemisimple}
The 2-category $\mathbf{Mod}(\mathcal{C})$ of separable right $\mathcal{C}$-module 1-categories is a semisimple 2-category that is locally finite.
\end{Proposition}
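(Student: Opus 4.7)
The plan is to verify each defining property of a semisimple 2-category together with local finiteness. Local semisimplicity and local finiteness are immediate from Lemma \ref{lem:Homfinitesemisimple}, which shows every Hom-category $\mathrm{Fun}_{\mathcal{C}}(\mathcal{M},\mathcal{N})$ is finite semisimple. So the two remaining conditions to check are the existence of adjoints for 1-morphisms and Cauchy completeness.

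For the existence of left and right adjoints, I would use Ostrik's theorem together with Theorem 2.2.17 of \cite{DSPS13} to present any 1-morphism $\mathcal{M}\to \mathcal{N}$ as tensoring with an $A$-$B$-bimodule $M$ in $\mathcal{C}$, where $\mathcal{M}\simeq \mathrm{Mod}_{\mathcal{C}}(A)$ and $\mathcal{N}\simeq \mathrm{Mod}_{\mathcal{C}}(B)$. Since $\mathcal{C}$ is rigid, $M$ admits both left and right duals in the bicategory of algebras and bimodules in $\mathcal{C}$, and tensoring with these duals supplies the desired adjoints to $-\otimes_A M$.

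For Cauchy completeness, I would invoke the same criterion (Theorem 3.1.7 of \cite{GJF}) used in Lemma \ref{lem:decompositionprimer}, which reduces the question to closure under direct sums and splittings of separable monads. Direct sums are straightforward: $\mathrm{Mod}_{\mathcal{C}}(A\oplus B)$ realizes $\mathcal{M}\boxplus \mathcal{N}$, and $A\oplus B$ is separable whenever $A$ and $B$ are. A separable monad on $\mathcal{M}\simeq \mathrm{Mod}_{\mathcal{C}}(A)$ amounts to a separable algebra $T$ in the monoidal 1-category $\mathrm{Bimod}_{\mathcal{C}}(A,A)$. Viewing $T$ as an algebra in $\mathcal{C}$ via its unit $A\to T$, I would combine the bimodule section of $T\otimes_A T\to T$ with the separability idempotent of $A$ (which provides a $T$-bimodule section of the canonical quotient $T\otimes T\to T\otimes_A T$) to produce a $T$-bimodule section of the multiplication $T\otimes T\to T$ in $\mathcal{C}$. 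This exhibits $T$ as a separable algebra in $\mathcal{C}$, so that $\mathrm{Mod}_{\mathcal{C}}(T)$ is an object of $\mathbf{Mod}(\mathcal{C})$, and I would then identify it with the Eilenberg-Moore splitting of the original monad.

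The main obstacle will be this last step: confirming in detail that $\mathrm{Mod}_{\mathcal{C}}(T)$ genuinely realizes the 2-categorical splitting of the monad in $\mathbf{Mod}(\mathcal{C})$, rather than merely being a separable right $\mathcal{C}$-module 1-category with the correct endomorphism algebra. Once this is done, the verification of the remaining axioms is essentially bookkeeping through the bimodule description.
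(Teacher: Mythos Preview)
Your proposal is correct and follows essentially the same route as the paper. The paper verifies local finite semisimplicity via Lemma~\ref{lem:Homfinitesemisimple}, cites \cite{DSPS14} for adjoints, invokes Theorem~3.1.7 of \cite{GJF} to reduce Cauchy completeness to direct sums and the splitting of separable monads, and then defers to the proof of Theorem~1.4.8 of \cite{DR} for the splitting argument; your outline simply unpacks these citations, and in particular your construction of the separability section for $T$ in $\mathcal{C}$ from that of $A$ together with the section of $T\otimes_A T\to T$ is precisely the content of the cited argument in \cite{DR}.
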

\begin{proof}
The proof follows using the argument given in the proof of theorem 1.4.8 of \cite{DR}. We give some key points for the reader's convenience. Firstly, observe that given two separable right $\mathcal{C}$-module 1-categories $\mathcal{M}$, and $\mathcal{N}$, then $Hom_{\mathbf{Mod}(\mathcal{C})}(\mathcal{M},\mathcal{N})$ is finite semisimple by lemma \ref{lem:Homfinitesemisimple}. The existence of adjoints for 1-morphisms follows from \cite{DSPS14}. Moreover, it is clear that $\mathbf{Mod}(\mathcal{C})$ has a zero object and direct sums for objects. Thus, it only remains to prove that every 2-condensation monad splits. By theorem 3.1.7 of \cite{GJF}, it is enough to show that separable algebras split. The argument used in the proof of theorem 1.4.8 of \cite{DR} also applies in our context, which concludes the proof.
\end{proof}

It is useful to describe the simple objects of $\mathbf{Mod}(\mathcal{C})$ more explicitly.

\begin{Lemma}
A separable right $\mathcal{C}$-module 1-category $\mathcal{M}$ is simple as an object of $\mathbf{Mod}(\mathcal{C})$ if and only if it is indecomposable.
\end{Lemma}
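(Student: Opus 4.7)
The plan is to use Lemma \ref{lem:decompositionprimer} in both directions, together with the semisimplicity of $\mathrm{Fun}_{\mathcal{C}}(\mathcal{M},\mathcal{M})$ established in Lemma \ref{lem:Homfinitesemisimple}. Unpacking the definitions: $\mathcal{M}$ is simple as an object of $\mathbf{Mod}(\mathcal{C})$ precisely when $\mathrm{Id}_{\mathcal{M}}$ is a simple object of $\mathrm{End}_{\mathbf{Mod}(\mathcal{C})}(\mathcal{M})=\mathrm{Fun}_{\mathcal{C}}(\mathcal{M},\mathcal{M})$, while $\mathcal{M}$ is indecomposable when it does not admit a splitting $\mathcal{M}\simeq \mathcal{M}_1\boxplus \mathcal{M}_2$ with both summands nonzero in $\mathbf{Mod}(\mathcal{C})$.

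For the ``simple implies indecomposable'' direction, I would argue by contrapositive. If $\mathcal{M}\simeq \mathcal{M}_1\boxplus \mathcal{M}_2$ with both $\mathcal{M}_i$ nonzero, then under this equivalence $\mathrm{Id}_{\mathcal{M}}$ corresponds to $\mathrm{Id}_{\mathcal{M}_1}\oplus \mathrm{Id}_{\mathcal{M}_2}$ in $\mathrm{Fun}_{\mathcal{C}}(\mathcal{M},\mathcal{M})$, and each $\mathrm{Id}_{\mathcal{M}_i}$ is nonzero because the $\mathcal{M}_i$ are nonzero objects of a semisimple $2$-category. This exhibits $\mathrm{Id}_{\mathcal{M}}$ as a nontrivial direct sum in the semisimple $1$-category $\mathrm{Fun}_{\mathcal{C}}(\mathcal{M},\mathcal{M})$, so it is not simple there.

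For the converse, suppose $\mathcal{M}$ is not simple in $\mathbf{Mod}(\mathcal{C})$. Since $\mathcal{M}$ is itself nonzero (otherwise the question is vacuous, and one can just exclude the zero case explicitly), $\mathrm{Id}_{\mathcal{M}}$ is nonzero in the finite semisimple $1$-category $\mathrm{Fun}_{\mathcal{C}}(\mathcal{M},\mathcal{M})$. Failure of simplicity then forces $\mathrm{Id}_{\mathcal{M}}$ to decompose as a direct sum $\mathrm{Id}_{\mathcal{M}}\cong f_1\oplus\cdots\oplus f_n$ with $n\geq 2$ and each $f_i$ a nonzero $\mathcal{C}$-module endofunctor. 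Applying Lemma \ref{lem:decompositionprimer} to this decomposition yields a splitting $\mathcal{M}\simeq \mathcal{M}_1\boxplus\cdots\boxplus \mathcal{M}_n$ of $\mathcal{M}$ into nonzero objects of $\mathbf{Mod}(\mathcal{C})$, so $\mathcal{M}$ is decomposable.

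The only mild subtlety is the bookkeeping that under the equivalence $\mathcal{M}\simeq\boxplus_i\mathcal{M}_i$ the identity $\mathrm{Id}_{\mathcal{M}}$ really does correspond to $\oplus_i \mathrm{Id}_{\mathcal{M}_i}$ (and that each $\mathrm{Id}_{\mathcal{M}_i}$ is nonzero); this follows from the naturality of identity $1$-morphisms under the projections and inclusions of a biproduct, together with Cauchy completeness. No genuine obstacle arises because all the hard work is already packaged into Lemma \ref{lem:decompositionprimer}.
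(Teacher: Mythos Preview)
Your proof is correct and follows essentially the same approach as the paper: both reduce the question to whether $\mathrm{Id}_{\mathcal{M}}$ is simple in $\mathrm{Fun}_{\mathcal{C}}(\mathcal{M},\mathcal{M})$. The only difference is that the paper invokes the standard module-category fact ``the monoidal unit of $\mathrm{Fun}_{\mathcal{C}}(\mathcal{M},\mathcal{M})$ is simple if and only if $\mathcal{M}$ is indecomposable'' directly, whereas you supply a proof of that equivalence internal to the 2-categorical framework via Lemma~\ref{lem:decompositionprimer}; your version is thus slightly more self-contained.
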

\begin{proof}
By definition, an object $\mathcal{M}$ of $\mathbf{Mod}(\mathcal{C})$ is simple if and only if the monoidal unit of $\mathrm{End}_{\mathbf{Mod}(\mathcal{C})}(\mathcal{M})$ is simple. But, this finite semisimple tensor 1-category is exactly $\mathrm{Fun}_{\mathcal{C}}(\mathcal{M},\mathcal{M})$, whose monoidal unit is simple if and only if $\mathcal{M}$ is indecomposable as a right $\mathcal{C}$-module 1-category.
\end{proof}

Proposition \ref{prop:modulesemisimple} gives us plenty of examples of semisimple 2-categories. Let us examine one in particular.

\begin{Example}\label{ex:infinitesimples}
Let $\mathds{k} = \mathbb{F}_p$ for a prime $p$, and $G$ a finite group. Write $\mathcal{C}$ for the finite semisimple tensor 1-category of $G$-graded finite-dimensional $\mathbb{F}_p$-vector spaces. By the above proposition, $\mathbf{Mod}(\mathcal{C})$ is a semisimple 2-category. Let us prove that it has infinitely many equivalence classes of simple objects. Given any positive integer $q$, write $\mathcal{C}_q$ for the finite semisimple 1-category of $G$-graded finite-dimensional $\mathbb{F}_{p^q}$-vector spaces. The 1-category $\mathcal{C}_q$ admits an obvious coherent right action by $\mathcal{C}$. Further, these 1-categories are pairwise non-equivalent. Therefore, it suffices to show that the right $\mathcal{C}$-module 1-categories $\mathcal{C}_q$ are separable. In order to see this, observe that $\mathbb{F}_{p^q}$ is a separable algebra in $\mathbf{Vect}_{\mathbb{F}_p}$ the finite semisimple tensor 1-category of finite dimensional $\mathbb{F}_p$-vector spaces (see corollary 4.5.8 of \cite{For}). Pushing this algebra forward along the monoidal inclusion $\mathbf{Vect}_{\mathbb{F}_p}\hookrightarrow \mathcal{C}$, we obtain a separable algebra in $\mathcal{C}$, which we also denote by $\mathbb{F}_{p^q}$. Now, it is not hard to show that $\mathrm{Mod}_{\mathcal{C}}(\mathbb{F}_{p^q})\simeq \mathcal{C}_q$ as right $\mathcal{C}$-module 1-categories, which proves the claim.
\end{Example}

\subsection{Definition \& First Properties}

Example \ref{ex:infinitesimples} shows that we can not expect $\mathbf{Mod}(\mathcal{C})$ to be a finite semisimple 2-category in general. Nevertheless, such 2-categories still have reasonable finiteness properties. In order to capture them, we need the following definition.

\begin{Definition}
Let $\mathfrak{C}$ be a semisimple 2-category. Two simple objects $X$ and $Y$ of $\mathfrak{C}$ are said to be connected if there is a non-zero 1-morphism between them.
\end{Definition}

As is explained in \cite{DR} over an algebraically closed field of characteristic zero, the categorical Schur lemma shows that being connected defines an equivalence relation on the set of simple objects of a given semisimple 2-category. But the categorical Schur lemma holds over an base field (see lemma \ref{lem:catschur}), so we can make the following definitions.

\begin{Definition}
The set $\pi_0(\mathfrak{C})$ is the quotient of the set of simple objects of $\mathfrak{C}$ under the equivalence relation of being connected. A semisimple 2-category $\mathfrak{C}$ is connected if $\pi_0(\mathfrak{C})$ is a singleton.
\end{Definition}

\begin{Definition}
A semisimple 2-category $\mathfrak{C}$ is compact if it is locally finite and $\pi_0(\mathfrak{C})$ is finite.
\end{Definition}

We now show that $\mathbf{Mod}(\mathcal{C})$ is a compact semisimple 2-category for every finite semisimple tensor 1-category $\mathcal{C}$.

\begin{Theorem}\label{thm:modulenearfinitesemisimple}
Let $\mathcal{C}$ be a finite semisimple tensor 1-category. The 2-category of separable right $\mathcal{C}$-module 1-categories is a compact semisimple 2-category.
\end{Theorem}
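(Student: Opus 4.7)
The plan is to reduce the compactness of $\mathbf{Mod}(\mathcal{C})$ to the decomposition of the regular module. Since Proposition \ref{prop:modulesemisimple} already guarantees that $\mathbf{Mod}(\mathcal{C})$ is a semisimple 2-category that is locally finite, the only remaining task is to show that $\pi_0(\mathbf{Mod}(\mathcal{C}))$ is finite.

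I would begin by observing that $\mathcal{C}$ itself, viewed as a right $\mathcal{C}$-module over itself, is separable: it is $\mathrm{Mod}_{\mathcal{C}}(\mathbb{1})$, and the unit $\mathbb{1}$ is trivially a separable algebra. Thus $\mathcal{C}$ is an object of $\mathbf{Mod}(\mathcal{C})$. By Proposition \ref{prop:decomposition}, it decomposes into a direct sum $\mathcal{C} \simeq \boxplus_{i=1}^n \mathcal{C}_i$ of simple (equivalently, indecomposable) separable right $\mathcal{C}$-modules. Finiteness of the number of summands is forced because $\mathrm{End}_{\mathbf{Mod}(\mathcal{C})}(\mathcal{C}) \simeq \mathcal{C}$ is a finite semisimple 1-category, so $\mathrm{Id}_{\mathcal{C}}$ admits only finitely many summands; each summand $\mathcal{C}_i$ is automatically separable because it lives in $\mathbf{Mod}(\mathcal{C})$, which is Cauchy complete.

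The key claim is that every simple object of $\mathbf{Mod}(\mathcal{C})$ is connected to some $\mathcal{C}_i$. Given a simple $\mathcal{M}$, I would pick any non-zero object $M \in \mathcal{M}$ and form the right $\mathcal{C}$-module functor $F_M : \mathcal{C} \to \mathcal{M}$ defined by $c \mapsto M \cdot c$. This is non-zero since $F_M(\mathbb{1}) = M \neq 0$. Applying the decomposition $\mathcal{C} \simeq \boxplus_i \mathcal{C}_i$, the functor $F_M$ splits as a direct sum $\oplus_i F_i$ with $F_i : \mathcal{C}_i \to \mathcal{M}$, and $F_M \neq 0$ forces $F_{i_0} \neq 0$ for some $i_0$. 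Hence $\mathcal{M}$ is connected to $\mathcal{C}_{i_0}$, and $|\pi_0(\mathbf{Mod}(\mathcal{C}))| \leq n$.

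I do not anticipate a major obstacle: all the ingredients are already in place. Separability of the unit is trivial, the decomposition of $\mathcal{C}$ into finitely many simples is Proposition \ref{prop:decomposition} combined with local finiteness, and the generation argument is a mild categorification of the fact that in a module category over an algebra, every module receives a non-zero map from (a summand of) the regular one. The only mildly delicate point is to confirm that a direct-sum decomposition of $\mathrm{Id}_{\mathcal{C}}$ really does force an analogous decomposition of every 1-morphism out of $\mathcal{C}$, but this is immediate from whiskering $F_M$ by the decomposition of $\mathrm{Id}_{\mathcal{C}}$.
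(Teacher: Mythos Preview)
Your proposal is correct and follows essentially the same route as the paper: construct a nonzero right $\mathcal{C}$-module functor $F_M:\mathcal{C}\to\mathcal{M}$ by $c\mapsto M\cdot c$, then use Proposition~\ref{prop:decomposition} to decompose $\mathcal{C}$ into finitely many simple summands and conclude that some restriction $F_{i_0}$ is nonzero. The paper's argument is terser but identical in substance.
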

\begin{proof}
Let us begin by observing that for any nonzero separable right $\mathcal{C}$-module $\mathcal{M}$, there exists a right $\mathcal{C}$-module functor $F:\mathcal{C}\rightarrow \mathcal{M}$. Namely, pick an object $M$ in $\mathcal{M}$, and set $F(C)= M\otimes C$ with the canonical right $\mathcal{C}$-module structure. By proposition \ref{prop:decomposition}, we know that $\mathcal{C}$ splits as a direct sum of finitely many simple objects. Thus, we have shown that any simple object of $\mathbf{Mod}(\mathcal{C})$ admits a nonzero 1-morphism from a direct summand of $\mathcal{C}$. This finishes the proof.
\end{proof}

In fact, the converse also holds.

\begin{Definition}
We say that a semisimple 2-category $\mathfrak{C}$ has a generator if there exists $X$ in $\mathfrak{C}$ such that the inclusion $\mathrm{B}End_{\mathfrak{C}}(X)\hookrightarrow \mathfrak{C}$ is a Cauchy completion.
\end{Definition}

\begin{Theorem}\label{thm:semisimplemodule}
Let $\mathfrak{C}$ be a compact semisimple 2-category. Then, $\mathfrak{C}$ has a generator. In particular, there exists a finite semisimple tensor 1-category $\mathcal{C}$ such that $\mathfrak{C}\simeq \mathbf{Mod}(\mathcal{C})$.
\end{Theorem}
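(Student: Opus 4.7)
The approach is to produce a generator $X$ using the finiteness of $\pi_0(\mathfrak{C})$, and then identify $\mathfrak{C}$ with $\mathbf{Mod}(\mathcal{C})$ where $\mathcal{C}:=End_{\mathfrak{C}}(X)$. First I would choose a representative simple object $X_{[i]}$ for each connected component $[i]\in\pi_0(\mathfrak{C})$, and form the finite direct sum $X:=\boxplus_{[i]\in\pi_0(\mathfrak{C})} X_{[i]}$, which exists by semisimplicity and is finite precisely because $\mathfrak{C}$ is compact.

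Next I would show that $X$ is a generator in the sense of the definition above. By proposition \ref{prop:decomposition} every object of $\mathfrak{C}$ splits as a finite direct sum of simples, so it suffices to show that every simple object $Y$ of $\mathfrak{C}$ arises as the splitting of a separable 2-condensation based at $X$. Given such $Y$, there is a unique $[i]$ such that $Y$ lies in the same component as $X_{[i]}$, hence a non-zero 1-morphism $f:X_{[i]}\to Y$. Taking a right adjoint $f^*:Y\to X_{[i]}$, the counit $\epsilon:f\circ f^*\Rightarrow Id_Y$ is non-zero by the triangle identities, and since $Id_Y$ is simple in the semisimple 1-category $End_{\mathfrak{C}}(Y)$, the map $\epsilon$ admits a section $\sigma:Id_Y\to f\circ f^*$. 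The data $(f,f^*,\epsilon,\sigma)$ assembles into a 2-condensation exhibiting $Y$ as a splitting of $X_{[i]}$ in the sense of \cite{GJF}; equivalently, the 1-endomorphism $A:=f^*\circ f$ of $X_{[i]}$ carries a canonical separable algebra structure in $End_{\mathfrak{C}}(X_{[i]})\subseteq End_{\mathfrak{C}}(X)$, and $Y$ is recovered as the splitting of the associated 2-condensation monad. This exhibits $X$ as a generator.

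Once $X$ is known to be a generator, I would set $\mathcal{C}:=End_{\mathfrak{C}}(X)$. This $\mathcal{C}$ is finite semisimple since $\mathfrak{C}$ is locally finite semisimple, and it is rigid monoidal because $\mathfrak{C}$ has left and right adjoints for 1-morphisms; hence $\mathcal{C}$ is a finite semisimple tensor 1-category. The 2-category $\mathbf{Mod}(\mathcal{C})$ is by construction the Cauchy completion of the one-object 2-category $\mathrm{B}\mathcal{C}$: separable algebras in $\mathrm{B}\mathcal{C}$ are exactly separable algebras in $\mathcal{C}$, and their splittings are precisely the separable right $\mathcal{C}$-module 1-categories appearing in subsection \ref{sub:sepmod}. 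The generator property therefore identifies the Cauchy completion of $\mathrm{B}\mathcal{C}$ with $\mathfrak{C}$, so $\mathfrak{C}\simeq\mathbf{Mod}(\mathcal{C})$.

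The step I expect to be the main obstacle is the precise verification that every simple $Y$ arises as the splitting of a separable 2-condensation on $X$ in the technical sense of \cite{GJF}. One must check that the adjunction data together with the chosen section $\sigma$ satisfies the coherence axioms making $A=f^*\circ f$ a separable algebra, and that the resulting splitting recovers exactly $Y$ rather than some larger summand; this should be closed off by lemma \ref{lem:decompositionprimer} together with the uniqueness part of proposition \ref{prop:decomposition}, but the bookkeeping with adjunctions is the delicate part of the argument.
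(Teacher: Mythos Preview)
Your proposal is correct and follows essentially the same route as the paper's proof: choose representatives $X_i$ of each class in $\pi_0(\mathfrak{C})$, set $X=\boxplus_i X_i$ and $\mathcal{C}=End_{\mathfrak{C}}(X)$, and use a section of the adjunction counit to exhibit every simple $Y$ as the splitting of a 2-condensation monad on $X$, so that the Cauchy completion of $\mathrm{B}\mathcal{C}$ is all of $\mathfrak{C}$. Your final worry is overstated: a 2-condensation in the sense of \cite{GJF} is precisely the data $(X,Y,f,f^*,\epsilon,\gamma)$ subject only to $\epsilon\cdot\gamma = Id_{Id_Y}$, so there is no further coherence to verify, and by definition such a 2-condensation witnesses $Y$ itself (not a larger summand) as the splitting of the induced 2-condensation monad $f^*\circ f$ on $X$.
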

\begin{proof}
Let us pick a simple object $X_i$ in every equivalence class of $\pi_0(\mathfrak{C})$. We define $X:=\boxplus_i X_i$, which is an object of $\mathfrak{C}$, and write $\mathcal{C}:=End_{\mathfrak{C}}(X)$. Now, the inclusion $$\begin{tabular}{ccc}$\mathrm{B}\mathcal{C}$&$\hookrightarrow$&$\mathfrak{C}$\\ $*$&$\mapsto$&$X$\end{tabular}$$ is full on 1-morphisms and fully faithful on 2-morphisms. As the right hand-side is Cauchy complete, using definition 1.2.1 of \cite{D1}, we get a linear 2-functor $$F:\mathbf{Mod}(\mathcal{C})\simeq Cau(\mathrm{B}\mathcal{C})\hookrightarrow \mathfrak{C},$$ which is still full on 1-morphisms and fully faithful on 2-morphisms. Thence, it only remains to prove essential surjectivity. Given that $\mathbf{Mod}(\mathcal{C})$ has direct sums, it is enough to show that every simple object in $\mathfrak{C}$ is in the essential image of $F$. In order to do so, we prove that for any simple object $Y$ of $\mathfrak{C}$, there exists a 2-condensation monad on $X$, whose splitting is $Y$. Let $f:X\rightarrow Y$ be a simple 1-morphism, which exists by construction of $X$. We write $f^*$ for its right adjoint, $\epsilon^f:f\circ f^*\Rightarrow Id_Y$ for the corresponding counit. As $f$ is non-zero, and $Id_Y$ is simple, $\epsilon^f$ admits a section, which we denote by $\gamma$. It follows from the definitions that the data $(X,Y, f, f^*, \epsilon, \gamma)$ defines a 2-condensation. In particular, there exists a 2-condensation monad on $X$ whose splitting in $\mathfrak{C}$ is $Y$ as desired.
\end{proof}

\subsection{Locally Separable Compact semisimple 2-Categories}

We now assume that $\mathds{k}$ is a perfect field. Let us recall the following definition from \cite{DSPS13}.

\begin{Definition}
A finite semisimple tensor 1-category is called separable if it is separable as a right $\mathcal{C}^{\otimes op}\boxtimes \mathcal{C}$-module 1-category.
\end{Definition}

\begin{Remark}
It is shown in \cite{DSPS13} that a finite semisimple tensor 1-category is separable if and only if its Drinfel'd center is finite semisimple. Furthermore, they show that over a field of characteristic zero, every finite semisimple tensor 1-category is separable.
\end{Remark}

Let $\mathcal{C}$ be a finite semisimple tensor 1-category. Following \cite{EGNO}, a decomposition of the unit into simple summands induces an equivalence of finite semisimple tensor 1-categories $$\mathcal{C}\simeq \begin{pmatrix}
_{1}\mathcal{C}_1 & \cdots & _1\mathcal{C}_n\\
\vdots & \ddots & \vdots\\
_n\mathcal{C}_1&\cdots & _n\mathcal{C}_n\\
\end{pmatrix}$$
for some integer $n$, where the monoidal unit of $_i\mathcal{C}_i$ is simple for all $i$.

\begin{Lemma}\label{lem:multifusionseparability}
A finite semisimple tensor 1-category $\mathcal{C}$ is separable if and only if the finite semisimple tensor sub-1-categories $_i\mathcal{C}_i$ are separable for all $i$.
\end{Lemma}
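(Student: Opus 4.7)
The plan is to apply the intrinsic characterization recorded in the preceding Remark, namely that a finite semisimple tensor 1-category is separable if and only if its Drinfel'd center is finite semisimple. Once this is in hand, it suffices to exhibit an equivalence of linear 1-categories
$$Z(\mathcal{C}) \simeq \prod_{i=1}^n Z({}_i\mathcal{C}_i),$$
since a finite product of locally finite linear 1-categories is finite semisimple if and only if each factor is, and each $_i\mathcal{C}_i$ is already a finite semisimple tensor 1-category so the Remark applies to it as well.

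To produce this equivalence, let $\mathbf{1}_\mathcal{C} \cong \bigoplus_i \mathbf{1}_i$ be the decomposition of the monoidal unit into its simple summands underlying the given matrix block form, so that $\mathbf{1}_i \otimes \mathbf{1}_j = 0$ for $i \neq j$ and $\mathbf{1}_i \otimes \mathbf{1}_i = \mathbf{1}_i$. Given $(X, \gamma) \in Z(\mathcal{C})$, I would write $X = \bigoplus_{i,j} {}_iX_j$ with ${}_iX_j := \mathbf{1}_i \otimes X \otimes \mathbf{1}_j$, which lands in the block ${}_i\mathcal{C}_j$. Evaluating the half-braiding $\gamma_{\mathbf{1}_i} : \mathbf{1}_i \otimes X \xrightarrow{\simeq} X \otimes \mathbf{1}_i$, then tensoring on the right by $\mathbf{1}_j$ and using orthogonality of the units, forces ${}_iX_j = 0$ whenever $i \neq j$. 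Hence $X$ is block-diagonal, and each surviving piece ${}_iX_i \in {}_i\mathcal{C}_i$ inherits a half-braiding against ${}_i\mathcal{C}_i$ obtained by restricting $\gamma$ to objects supported on the $i$-th block. Conversely, a tuple of objects in $\prod_i Z({}_i\mathcal{C}_i)$ assembles into an object of $Z(\mathcal{C})$ whose half-braiding against a general object $Y = \bigoplus_{k,l} {}_kY_l$ is defined block-by-block. An entirely analogous block-diagonal analysis handles morphism spaces, yielding the desired equivalence.

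Combined with the Remark, this immediately gives that $\mathcal{C}$ is separable if and only if each $Z({}_i\mathcal{C}_i)$ is finite semisimple, which is if and only if each ${}_i\mathcal{C}_i$ is separable. The main obstacle is the block-diagonality of objects in $Z(\mathcal{C})$, which rests on a careful application of the naturality of the half-braiding against the idempotent projections onto the summands $\mathbf{1}_i$; this manipulation is essentially standard for multifusion 1-categories, and it is the only non-formal ingredient in the argument.
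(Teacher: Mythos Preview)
Your argument contains a genuine gap: the claimed equivalence $Z(\mathcal{C}) \simeq \prod_{i} Z({}_i\mathcal{C}_i)$ is false in general. The forward direction is fine, and your block-diagonality argument is correct: tensoring $\gamma_{\mathbf{1}_i}$ on the right by $\mathbf{1}_j$ does kill the off-diagonal pieces ${}_iX_j$, so restriction gives a well-defined functor $Z(\mathcal{C}) \to \prod_i Z({}_i\mathcal{C}_i)$. The problem is the converse. Given a tuple $({}_iX_i,\gamma^{(i)})$ with each $\gamma^{(i)}$ a half-braiding against ${}_i\mathcal{C}_i$, you need a half-braiding of $X=\bigoplus_i {}_iX_i$ against an arbitrary ${}_kY_l$ with $k\neq l$. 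Computing, ${}_kY_l\otimes X = {}_kY_l\otimes {}_lX_l$ while $X\otimes {}_kY_l = {}_kX_k\otimes {}_kY_l$, and nothing in the data $\gamma^{(k)},\gamma^{(l)}$ furnishes an isomorphism between these. Concretely, for $\mathcal{C}=\mathrm{Mat}_n(\mathbf{Vect}_{\mathds{k}})$ one has ${}_i\mathcal{C}_i\simeq \mathbf{Vect}_{\mathds{k}}$ for every $i$, so $\prod_i Z({}_i\mathcal{C}_i)\simeq \mathbf{Vect}_{\mathds{k}}^{\,n}$, whereas $Z(\mathcal{C})\simeq \mathbf{Vect}_{\mathds{k}}$; the restriction functor is the diagonal embedding, not an equivalence.

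The paper's proof circumvents this by using Morita invariance of the Drinfel'd center rather than a product decomposition. When $\mathcal{C}$ is connected (every ${}_i\mathcal{C}_j$ nonzero), the bimodule ${}_1\mathcal{C}_i$ witnesses a Morita equivalence between ${}_1\mathcal{C}_1$ and ${}_i\mathcal{C}_i$, and likewise $\mathcal{C}$ is Morita equivalent to ${}_1\mathcal{C}_1$; hence $Z(\mathcal{C})\simeq Z({}_1\mathcal{C}_1)\simeq Z({}_i\mathcal{C}_i)$ for all $i$, and separability of $\mathcal{C}$ is equivalent to separability of any single ${}_i\mathcal{C}_i$. In the general case one passes to the decomposition of $\mathcal{C}$ into connected summands, where the center genuinely does split as a product; this reduces to the connected case. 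Your product formula is only valid once one has already passed to that block-diagonal decomposition, which is coarser than the one indexed by the simple summands of the unit.
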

\begin{proof}
Let us begin by assuming that $\mathcal{C}$ is connected, i.e. ${_{i}\mathcal{C}_j}$ is non-zero for every $i,j$. In this case, $\mathcal{C}$ is Morita equivalent to $_{1}\mathcal{C}_1$, so the statement follows from the fact that the Drinfel'd center is a Morita invariant. More generally, $\mathcal{C}$ is Morita equivalent to a finite semisimple tensor 1-category of the form $\oplus_{j\in J}{_{j}\mathcal{C}_j}$ for some subset $J\subseteq \{1,...,n\}$. It follows from the definitions that $\oplus_{j\in J}{_{j}\mathcal{C}_j}$ is separable if and only if each of the finite semisimple tensor 1-category ${_{j}\mathcal{C}_j}$ is.
\end{proof}

\begin{Definition}
A compact semisimple 2-category $\mathfrak{C}$ is locally separable if $End_{\mathfrak{C}}(X)$ is a separable finite semisimple tensor 1-category for every simple object $X$ of $\mathfrak{C}$.
\end{Definition}

\begin{Remark}
Thanks to lemma \ref{lem:multifusionseparability}, in order to check that a compact semisimple 2-category $\mathfrak{C}$ is locally separable, it is enough to prove that $End_{\mathfrak{C}}(X_i)$ is a separable finite semisimple tensor 1-category, where the $X_i$ are simple objects representing the equivalence classes in $\pi_0(\mathfrak{C}$.
\end{Remark}

\begin{Theorem}\label{thm:sepmultsep2cat}
Let $\mathcal{C}$ be a separable finite semisimple tensor 1-category. The 2-category of separable right $\mathcal{C}$-module 1-categories is a locally separable compact semisimple 2-category.
\end{Theorem}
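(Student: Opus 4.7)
The plan is to bootstrap from Theorem \ref{thm:modulenearfinitesemisimple}, which already gives that $\mathbf{Mod}(\mathcal{C})$ is a compact semisimple 2-category, and to upgrade this to local separability by controlling the endomorphism tensor 1-categories of the simple objects. By the remark following the definition of local separability, I only need to check that $\mathrm{End}_{\mathbf{Mod}(\mathcal{C})}(\mathcal{M})$ is a separable finite semisimple tensor 1-category as $\mathcal{M}$ runs over a set of representatives of the classes in $\pi_0(\mathbf{Mod}(\mathcal{C}))$. By the lemma characterizing the simples of $\mathbf{Mod}(\mathcal{C})$ as the indecomposable separable right $\mathcal{C}$-modules, and by the identification used in the proof of Lemma \ref{lem:Homfinitesemisimple}, this endomorphism tensor 1-category is equivalent to $\mathrm{Fun}_{\mathcal{C}}(\mathcal{M},\mathcal{M})$, the dual $\mathcal{C}^{*}_{\mathcal{M}}$ of $\mathcal{C}$ with respect to the separable module $\mathcal{M}$.

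Next, I would invoke the Morita-invariance of separability for finite semisimple tensor 1-categories established in \cite{DSPS13}: if $\mathcal{C}$ is a separable finite semisimple tensor 1-category and $\mathcal{M}$ is an indecomposable separable right $\mathcal{C}$-module 1-category, then the dual tensor 1-category $\mathcal{C}^{*}_{\mathcal{M}}\simeq\mathrm{Fun}_{\mathcal{C}}(\mathcal{M},\mathcal{M})$ is again a separable finite semisimple tensor 1-category. Concretely, writing $\mathcal{M}\simeq\mathrm{Mod}_{\mathcal{C}}(A)$ for a separable algebra $A$ in $\mathcal{C}$, one has $\mathrm{Fun}_{\mathcal{C}}(\mathcal{M},\mathcal{M})\simeq \mathrm{Bimod}_{\mathcal{C}}(A,A)$, and separability of this bimodule category follows because its Drinfel'd center is equivalent to that of $\mathcal{C}$, which is finite semisimple by hypothesis on $\mathcal{C}$. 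Applying this to each representative $\mathcal{M}$ then yields local separability.

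The only subtle point is to make sure that the Morita-invariance statement I am quoting is available in the generality of an arbitrary perfect field $\mathds{k}$; this is where I expect to spend most care. The argument via Drinfel'd centers works over any perfect field once one knows that the center is insensitive to Morita equivalence of finite semisimple tensor 1-categories, which is exactly the content of the results cited in the Remark following the definition of a separable finite semisimple tensor 1-category. With that in hand, the theorem follows by combining Theorem \ref{thm:modulenearfinitesemisimple} with the identification of the endomorphism tensor 1-category of each simple as a separable dual, and then quoting the remark to deduce local separability from the finitely many representative checks.
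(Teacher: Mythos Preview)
Your approach is essentially the same as the paper's: reduce to showing that $\mathrm{Fun}_{\mathcal{C}}(\mathcal{M},\mathcal{M})$ is separable for each indecomposable separable $\mathcal{M}$, and then use Morita invariance of separability (via the Drinfel'd center) from \cite{DSPS13}. The only refinement in the paper is that when $\mathcal{C}$ has non-simple unit, $\mathcal{M}$ establishes a Morita equivalence between $\mathrm{Fun}_{\mathcal{C}}(\mathcal{M},\mathcal{M})$ and a tensor \emph{sub}-1-category of $\mathcal{C}$ rather than $\mathcal{C}$ itself, so one invokes Lemma~\ref{lem:multifusionseparability} to know these subcategories are separable; your claim that the Drinfel'd center of the dual agrees with that of $\mathcal{C}$ is not literally correct in that case, but the conclusion survives since the relevant center is a summand of $\mathcal{Z}(\mathcal{C})$.
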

\begin{proof}
Let $\mathcal{M}$ be an indecomposable separable right $\mathcal{C}$-module 1-category. Then, $End_{\mathcal{C}}(\mathcal{M})$ is Morita equivalent (via $\mathcal{M}$) to a finite semisimple tensor sub-1-category of $\mathcal{C}$. As all such finite semisimple tensor sub-1-categories are separable the statement follows.
\end{proof}

\begin{Theorem}\label{thm:sep2catsepmult}
Let $\mathfrak{C}$ be a locally separable compact semisimple 2-category. Then, there exists a separable finite semisimple tensor 1-category $\mathcal{C}$ such that $\mathfrak{C}\simeq \mathbf{Mod}(\mathcal{C})$.
\end{Theorem}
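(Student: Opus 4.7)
The plan is to reuse the generator construction from Theorem \ref{thm:semisimplemodule} and verify that, under the local separability hypothesis, the finite semisimple tensor 1-category it produces is separable. Concretely, I would pick a simple object $X_i$ in each equivalence class of $\pi_0(\mathfrak{C})$, set $X:=\boxplus_i X_i$, and $\mathcal{C}:=End_{\mathfrak{C}}(X)$. Theorem \ref{thm:semisimplemodule} already guarantees that $\mathfrak{C}\simeq \mathbf{Mod}(\mathcal{C})$, so the only remaining task is to establish separability of $\mathcal{C}$.

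The key observation is that, since the $X_i$ are drawn from distinct equivalence classes of $\pi_0(\mathfrak{C})$, there are by definition no non-zero 1-morphisms between $X_i$ and $X_j$ whenever $i\neq j$. Consequently, the $\mathds{k}$-linear $Hom$ 1-categories $Hom_{\mathfrak{C}}(X_j, X_i)$ vanish for $i\neq j$. In the matrix decomposition of $\mathcal{C}$ associated to the unit splitting $Id_X\cong \oplus_i Id_{X_i}$, this means that the off-diagonal components ${_i\mathcal{C}_j}$ are zero, while the diagonal components satisfy ${_i\mathcal{C}_i}\simeq End_{\mathfrak{C}}(X_i)$. Equivalently, $\mathcal{C}$ splits as a direct sum of finite semisimple tensor 1-categories
$$\mathcal{C}\simeq\bigoplus_i End_{\mathfrak{C}}(X_i).$$

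By the local separability hypothesis, each summand $End_{\mathfrak{C}}(X_i)$ is a separable finite semisimple tensor 1-category. Lemma \ref{lem:multifusionseparability} then immediately yields the separability of $\mathcal{C}$, finishing the argument. I do not anticipate any real obstacle here: the substantive work of producing a finite semisimple tensor 1-category with the right Morita class is already packaged in Theorem \ref{thm:semisimplemodule}, and the local separability hypothesis is engineered precisely to transfer separability to the diagonal blocks of the matrix form of $\mathcal{C}$.
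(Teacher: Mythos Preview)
Your proposal is correct and follows exactly the approach the paper takes: the paper's proof is the single sentence ``The argument used to prove theorem \ref{thm:semisimplemodule} applies, but, thanks to our hypothesis, the finite semisimple tensor 1-category we construct is separable,'' and you have simply spelled out why that last clause holds by noting that the off-diagonal blocks of $\mathcal{C}=End_{\mathfrak{C}}(\boxplus_i X_i)$ vanish and invoking Lemma~\ref{lem:multifusionseparability}. No gap, no divergence.
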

\begin{proof}
The argument used to prove theorem \ref{thm:semisimplemodule} applies, but, thanks to our hypothesis, the finite semisimple tensor 1-category we construct is separable.
\end{proof}

\begin{Remark}\label{rem:separableperfectfinitesemisimple}
As $\mathds{k}$ is a perfect field, and $\mathcal{C}$ is separable, $\mathbf{Mod}(\mathcal{C})$ may also be described as the 2-category of finite semisimple right $\mathcal{C}$-module 1-categories as can be seen using proposition 2.5.10 of \cite{DSPS13}.
\end{Remark}

\begin{Example}\label{ex:separablevectG}
Let $G$ be a finite group, and $\mathds{k}$ be an algebraically closed field such that $char(\mathds{k})\nmid |G|$. Define $\mathcal{C}$ to be the finite semisimple tensor 1-category of finite dimensional $G$-graded $\mathds{k}$-vector spaces. Then, $\mathcal{C}$ is separable. We now describe the structure of the compact semisimple 2-category $\mathbf{Mod}(\mathcal{C})$. Thanks to the separability of $\mathcal{C}$, we know that the equivalence classes simple objects of $\mathbf{Mod}(\mathcal{C})$ correspond to the equivalence classes of indecomposable finite semisimple right $\mathcal{C}$-module 1-categories. Following example 7.4.10 of \cite{EGNO}, we know that the latter are given by pairs consisting of a subgroup $H\subseteq G$ and a cohomology class $\phi\in H^2(H;\mathds{k}^{\times})$ (under a suitable equivalence relation induced by conjugation). Thus, there are only finitely many equivalences classes of simple objects in $\mathbf{Mod}(\mathcal{C})$, i.e. it is a finite semisimple 2-category.
\end{Example}

\section{Finite Semisimple 2-Categories}\label{sec:finite}

The objective of this section is to give examples of finite semisimple 2-categories. Our first task is to identify when the base field allows for such 2-categories to exist. Having done so, we prove that over an appropriate base field, every compact semisimple 2-category is finite.

\subsection{Non-examples}

Given $\mathcal{C}$ a finite semisimple tensor 1-category. Example \ref{ex:infinitesimples} suggests that there is, in general, no hope for the 2-category of separable right $\mathcal{C}$-module 1-categories to be finite semisimple if our base field has infinitely many non-equivalent finite separable extensions. We begin by making this precise.

\begin{Proposition}\label{prop:infiniteseparableclosure}
Let $\mathds{k}$ be a field whose separable closure $\mathds{k}^{sep}$ is an infinite extension. Then, given any finite semisimple tensor 1-category $\mathcal{C}$ over $\mathds{k}$, the compact semisimple 2-category $\mathbf{Mod}(\mathcal{C})$ has infinitely many equivalence classes of simple objects.
\end{Proposition}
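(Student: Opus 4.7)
The plan is to exhibit, for each isomorphism class of finite separable extension $\mathds{k}'/\mathds{k}$, a corresponding simple object of $\mathbf{Mod}(\mathcal{C})$, and then to distinguish them by the algebra of natural endomorphisms of the identity 1-morphism. Since $[\mathds{k}^{sep}:\mathds{k}]=\infty$ implies the existence of infinitely many pairwise non-isomorphic such extensions $\{\mathds{k}_i\}_{i\in I}$ (otherwise the compositum of representatives inside a fixed algebraic closure would be a finite extension equal to $\mathds{k}^{sep}$, a contradiction), this will suffice.

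For each $\mathds{k}_i$, the extension $\mathds{k}_i$ is a commutative separable algebra in $\mathbf{Vect}_\mathds{k}$. Pushing forward along the canonical linear monoidal functor $\mathbf{Vect}_\mathds{k}\to\mathcal{C}$, $V\mapsto V\otimes \mathds{1}_\mathcal{C}$ (exactly as in Example~\ref{ex:infinitesimples}), I would obtain a commutative separable algebra $A_i := \mathds{k}_i\otimes \mathds{1}_\mathcal{C}$ in $\mathcal{C}$. Then $\mathcal{M}_i := \mathrm{Mod}_\mathcal{C}(A_i)$ is a separable right $\mathcal{C}$-module 1-category, hence an object of $\mathbf{Mod}(\mathcal{C})$.

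The key invariant is $\mathrm{End}_{\mathbf{Mod}(\mathcal{C})}(\mathrm{Id}_{\mathcal{M}_i})$. Using the identification $\mathrm{End}_{\mathbf{Mod}(\mathcal{C})}(\mathcal{M}_i)\simeq \mathrm{Bimod}_\mathcal{C}(A_i,A_i)$ (cf.\ the proof of Lemma~\ref{lem:Homfinitesemisimple}), the identity 1-morphism corresponds to the regular bimodule $A_i$, and commutativity together with centrality of $A_i$ give $\mathrm{End}_{A_i\text{-bimod}}(A_i)=A_i$ as an algebra. Under the standard assumption $\mathrm{End}_\mathcal{C}(\mathds{1}_\mathcal{C})=\mathds{k}$, this algebra is $\mathds{k}_i$, a field — so $\mathrm{Id}_{\mathcal{M}_i}$ has no non-trivial idempotents, hence is simple in $\mathrm{End}_{\mathbf{Mod}(\mathcal{C})}(\mathcal{M}_i)$, meaning $\mathcal{M}_i$ is a simple object of $\mathbf{Mod}(\mathcal{C})$. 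Since this invariant is preserved under equivalence in $\mathbf{Mod}(\mathcal{C})$, the pairwise non-isomorphism of the $\mathds{k}_i$'s yields the desired infinite family of pairwise non-equivalent simple objects.

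The main obstacle is handling the case $\mathrm{End}_\mathcal{C}(\mathds{1}_\mathcal{C})=K$ for a non-trivial finite extension $K/\mathds{k}$; then the invariant for $\mathcal{M}_i$ becomes the finite étale $K$-algebra $\mathds{k}_i\otimes_\mathds{k} K$, which may split as a product of smaller field extensions of $K$. I would resolve this by decomposing $\mathcal{M}_i$ into its indecomposable summands, each giving a simple object whose identity endomorphism algebra is one such field factor; since $[K:\mathds{k}]$ is finite, $[\mathds{k}^{sep}:K]$ is still infinite, so infinitely many distinct finite separable extensions of $K$ appear across varying $i$, yielding infinitely many non-equivalent simple objects of $\mathbf{Mod}(\mathcal{C})$ as required.
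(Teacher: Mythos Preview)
Your argument is essentially correct and takes a genuinely different route from the paper's. The paper proceeds inductively: starting from any indecomposable separable module category $\mathcal{M}$, it tensors with $\mathbf{Vect}_{\mathds{k}'}$ for a large enough separable extension $\mathds{k}'/\mathds{k}$ and tracks the invariant $N(\mathcal{M})=\max_X\dim_{\mathds{k}}\mathrm{End}_{\mathcal{M}}(X)$ over simple objects $X\in\mathcal{M}$, showing this can be strictly increased at each step. You instead construct all the module categories at once as $\mathcal{M}_i=\mathrm{Mod}_{\mathcal{C}}(\mathds{k}_i\otimes\mathds{1})$ and distinguish their indecomposable summands by a different invariant, the commutative $\mathds{k}$-algebra $\mathrm{End}(\mathrm{Id}_{\mathcal{M}})$. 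Your invariant is arguably cleaner (a single algebraic object rather than a maximum), and the direct construction avoids the inductive scaffolding; on the other hand, the paper's argument is uniform in $\mathcal{C}$ and never needs to examine $\mathrm{End}_{\mathcal{C}}(\mathds{1}_{\mathcal{C}})$.

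Two minor points you should tighten. First, the proposition is stated for an arbitrary finite semisimple tensor $1$-category, so $\mathds{1}_{\mathcal{C}}$ need not be simple; in that case $\mathrm{End}_{\mathcal{C}}(\mathds{1}_{\mathcal{C}})$ is a finite product of fields rather than a single field extension $K$. Your final paragraph only treats the latter. The fix is painless: your invariant for an indecomposable summand of $\mathcal{M}_i$ is still a field factor of $\mathds{k}_i\otimes_{\mathds{k}}\mathrm{End}_{\mathcal{C}}(\mathds{1}_{\mathcal{C}})$, and since $\mathds{k}_i$ embeds into each such factor, their $\mathds{k}$-dimensions are unbounded as $[\mathds{k}_i:\mathds{k}]\to\infty$, giving infinitely many non-isomorphic invariants. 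Second, the expression ``$[\mathds{k}^{sep}:K]$'' is ill-posed when $K/\mathds{k}$ is not separable (which you have not ruled out); what you actually need, and what follows immediately from $[K:\mathds{k}]<\infty$, is that the field factors appearing have unbounded dimension over $\mathds{k}$.
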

\begin{proof}
Let $\mathcal{M}_0$ be any separable right $\mathcal{C}$-module 1-category. For instance, take $\mathcal{M}_0=\mathcal{C}$. We let $$N(\mathcal{M}_0):=\mathrm{max}\{\mathrm{dim}_{\mathds{k}}(End_{\mathcal{M}}(X))|X\in\mathcal{M},\ X\ \textrm{simple}\}\in\mathbb{N}.$$
We claim that there exists indecomposable separable right $\mathcal{C}$-module 1-categories $\mathcal{M}_i$ for every non-negative integer $i$, such that $N(\mathcal{M}_i)>N(\mathcal{M}_{i-1})$ for all $i\geq 1$. This readily implies the statement of the proposition.

In order to prove the above claim, it is enough to prove that given any indecomposable separable right $\mathcal{C}$-module 1-category $\mathcal{M}$, there exists an indecomposable separable right $\mathcal{C}$-module 1-category $\mathcal{M}'$ such that $N(\mathcal{M}')>N(\mathcal{M})$. So let us fix such an $\mathcal{M}$. As the extension $\mathds{k}^{sep}/\mathds{k}$ is infinite, there exists a finite separable extension $\mathds{k}'/\mathds{k}$ whose degree is at least $N(\mathcal{M})+1$. In particular, $\mathds{k}'$ is finite separable $\mathds{k}$-algebra.

We claim that the Deligne tensor product $\widehat{\mathcal{M}}:=\mathbf{Vect}_{\mathds{k}'}\boxtimes\mathcal{M}$ over $\mathds{k}$ is a separable right $\mathcal{C}$-module 1-category. Namely, by hypothesis, there exists a separable algebra $A$ in $\mathcal{C}$ such that $\mathcal{M}\simeq \mathrm{Mod}_{\mathcal{C}}(A)$ as right $\mathcal{C}$-module 1-categories. We then find that $$\mathbf{Vect}_{\mathds{k}'}\boxtimes\mathcal{M}\simeq \mathrm{Mod}_{\mathcal{C}}(\mathds{k}'\otimes A),$$ as right $\mathcal{C}$-module 1-categories. As $\mathds{k}'\otimes A$ is a separable algebra in $\mathcal{C}$, this proves the claim.

Note that $\widehat{\mathcal{M}}$ might not be indecomposable, so we pick an indecomposable summand $\mathcal{M}'$. It remains to check that $N(\mathcal{M}')\geq N+1$. Pick $X$ a simple object of $\mathcal{M}$. Abusing the notation, we will also write $X$ for the corresponding object of $\widehat{\mathcal{M}}$, which might not be simple. By construction, $$End_{\widehat{\mathcal{M}}}(X)\cong End_{\mathcal{M}}(X)\otimes_{\mathds{k}}\mathds{k}',$$ which is a finite semisimple $\mathds{k}$-algebra (see corollary 18 of \cite{ERZ}). In fact, this is a finite semisimple $\mathds{k}'$-algebra. Thus, by the Artin-Wedderburn theorem, $$End_{\widehat{\mathcal{M}}}(X) \cong \mathrm{Mat}_{n_1}(D_1)\times ...\times \mathrm{Mat}_{n_m}(D_m),$$ for some integers $m, n_1,..., n_m$, and finite division rings $D_1$, ..., $D_m$ over $\mathds{k}'$. In particular, for any simple summand $Y$ of $X$, we have $dim_{\mathds{k}}(End_{\widehat{\mathcal{M}}}(Y))\geq N+1$. As $X$ and $Y$ were arbitrary, this shows that $N(\mathcal{M}')\geq N+1$, and so the proof is complete.
\end{proof}

It turns out that we can describe those fields whose separable closure is a finite extension quite explicitly using a slight elaboration on the Artin-Schreier theorem. This is carried out in theorem 4.1 of \cite{C}, which we now recall.

\begin{Theorem}\label{thm:ArtinSchreier}
Let $\mathds{k}$ be a field whose separable closure is a finite extension. Then, either $\mathds{k}$ is separably closed, or $\mathds{k}$ is a real closed field, i.e. $char(\mathds{k})=0$, $\mathds{k}$ is not algebraically closed, and $\mathds{k}\lbrack\sqrt{-1}\rbrack$ is algebraically closed.
\end{Theorem}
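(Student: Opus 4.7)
The plan is to combine the classical Artin-Schreier theorem for algebraic closures with a separate positive-characteristic argument that rules out the possibility that a non-trivial extension $\mathds{k}^{sep}/\mathds{k}$ can be finite when $\mathrm{char}(\mathds{k}) > 0$.

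In characteristic zero, separable and algebraic closures coincide, so $\mathds{k}^{sep} = \bar{\mathds{k}}$, and I can directly invoke the classical Artin-Schreier theorem: if $\bar{\mathds{k}}/\mathds{k}$ is a non-trivial finite extension, then $[\bar{\mathds{k}}:\mathds{k}] = 2$, $\bar{\mathds{k}} = \mathds{k}(\sqrt{-1})$, and $\mathds{k}$ is real closed. This disposes of the characteristic-zero half of the statement.

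The substantive task is to rule out positive characteristic. Assume for contradiction that $\mathrm{char}(\mathds{k}) = p > 0$ and $G := \mathrm{Gal}(\mathds{k}^{sep}/\mathds{k})$ is a non-trivial finite group. By Cauchy's theorem, $G$ contains an element of some prime order $\ell$; writing $H \leq G$ for the cyclic subgroup it generates and setting $F := (\mathds{k}^{sep})^H$, the extension $\mathds{k}^{sep}/F$ is cyclic Galois of prime degree $\ell$. If $\ell \neq p$, then, after arranging (via a degree argument using $\gcd(\ell, \ell - 1) = 1$) that $F$ already contains a primitive $\ell$-th root of unity, Kummer theory gives $\mathds{k}^{sep} = F(\alpha)$ with $\alpha^\ell \in F^\times$; iterating the extraction of $\ell$-th roots inside the separably closed field $\mathds{k}^{sep}$ and tracking the resulting degree and Galois constraints forces $\ell = 2$ and $\mathrm{char}(\mathds{k}) = 0$, contradicting $p > 0$.

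If instead $\ell = p$, I would apply Artin-Schreier theory to write $\mathds{k}^{sep} = F(\theta)$ with $\wp(\theta) := \theta^p - \theta \in F \setminus \wp(F)$. Since $x^p - x - \theta$ is separable over $\mathds{k}^{sep}$ (its derivative is the nonzero constant $-1$), it must have a root $\beta \in \mathds{k}^{sep}$; a Galois generator $\sigma$ of $\mathds{k}^{sep}/F$ then sends $\beta$ to $\beta + \gamma$ for some $\gamma \in \mathds{k}^{sep}$ with $\wp(\gamma) = 1$, and iterating this action under the constraint $\sigma^p = \mathrm{id}$ yields a contradiction. This Artin-Schreier case is the main obstacle of the proof: the manipulation of iterated $\wp$-preimages inside a separably closed but a priori not algebraically closed field requires genuine care, and I would follow the detailed argument given in theorem 4.1 of \cite{C}.
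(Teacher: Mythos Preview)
The paper does not prove this theorem at all; it is simply quoted as theorem 4.1 of \cite{C}. Your proposal ultimately defers to exactly the same reference, so at that level the approaches coincide and there is nothing to compare.

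Since you went further and sketched the argument, one point is worth flagging. In your positive-characteristic case with $\ell \neq p$, you assert that the Kummer iteration ``forces $\ell = 2$ and $\mathrm{char}(\mathds{k}) = 0$''. The first conclusion is correct (for odd $\ell \neq p$ the tower $F \subset F(a^{1/\ell}) \subset F(a^{1/\ell^2}) \subset \cdots$ is strictly increasing inside $\mathds{k}^{sep}$, a contradiction), but the second does not follow from Kummer theory alone: the $\ell = 2$ analysis only yields $\mathds{k}^{sep} = F(\sqrt{-1})$ and $\mathrm{char}(\mathds{k}) \neq 2$. Ruling out odd positive characteristic at this stage needs a separate step. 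One clean way: if $\mathrm{char}(\mathds{k}) = p$ is odd and $[\mathds{k}^{sep}:\mathds{k}] = 2$, then restriction gives a homomorphism $\mathrm{Gal}(\mathds{k}^{sep}/\mathds{k}) \cong \mathbb{Z}/2 \to \mathrm{Gal}(\overline{\mathbb{F}}_p/\mathds{k}\cap\overline{\mathbb{F}}_p) \subseteq \widehat{\mathbb{Z}}$, which must be trivial since $\widehat{\mathbb{Z}}$ is torsion-free; hence $\overline{\mathbb{F}}_p \subseteq \mathds{k}$ and in particular $\sqrt{-1} \in \mathds{k}$, a contradiction. This extra ingredient is presumably absorbed into the reference you cite, but your sketch as written does not account for it.
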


\begin{Remark}
Examples of real closed fields include $\mathbb{R}$, but also $\mathbb{R}^{alg}=\overline{\mathbb{Q}}\cap\mathbb{R}$ the subfield of algebraic numbers. There are more exotic examples such as the fields of hyperreal numbers.
\end{Remark}

Finally, it is well-known that there is no nontrivial finite dimensional division algebra over an algebraically closed field. We need a weaker version of this statement for finite dimensional division algebras over real closed fields.

\begin{Lemma}\label{lem:divisionalgebrarealclosed}
Let $\mathds{k}$ be a real closed field. There are finitely many equivalences classes of finite dimensional division algebras over $\mathds{k}$.
\end{Lemma}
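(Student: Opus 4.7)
The plan is to adapt the classical Frobenius theorem to an arbitrary real closed field $\mathds{k}$, establishing the stronger statement that up to isomorphism the only finite-dimensional division algebras over $\mathds{k}$ are $\mathds{k}$ itself, $\mathds{k}[\sqrt{-1}]$, and a unique quaternion algebra $\mathds{k}\langle i, j \mid i^2 = j^2 = -1,\ ij = -ji\rangle$. Since this is a finite list of three, the lemma follows. The starting observation is that for any element $d$ of a finite-dimensional division algebra $D$ over $\mathds{k}$, the subalgebra $\mathds{k}[d]$ is a commutative finite-dimensional $\mathds{k}$-subalgebra of $D$, hence a finite field extension of $\mathds{k}$. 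By theorem \ref{thm:ArtinSchreier}, applied to the real closed field $\mathds{k}$, the only finite extensions of $\mathds{k}$ are $\mathds{k}$ and $\mathds{k}[\sqrt{-1}]$, so every element of $D$ satisfies a polynomial of degree at most $2$ over $\mathds{k}$.

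Next, I would split on the center $Z(D)$, which is itself a finite extension of $\mathds{k}$ and so is either $\mathds{k}$ or $\mathds{k}[\sqrt{-1}]$. In the former case, $D$ is a finite-dimensional central division algebra over an algebraically closed field, and since any maximal commutative subfield of $D$ is algebraic over its center, one gets $D = \mathds{k}[\sqrt{-1}]$ immediately. In the latter case $Z(D)=\mathds{k}$, I would mimic the classical Frobenius argument: set $V := \{d \in D \setminus \mathds{k} : d^2 \in \mathds{k}_{\leq 0}\} \cup \{0\}$, where negativity is measured using the canonical ordering on $\mathds{k}$. Using the quadratic relation satisfied by every element together with the fact that every nonzero element of $\mathds{k}$ is either a square or the negative of a square, I would show that $V$ is a $\mathds{k}$-subspace of $D$ with $D = \mathds{k} \oplus V$. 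The pairing $B(u,v) := -\tfrac{1}{2}(uv + vu)$ is then $\mathds{k}$-valued, symmetric, and positive definite on $V$. Performing Gram-Schmidt over $\mathds{k}$ produces an orthonormal family $i, j, \ldots$ satisfying the quaternion relations $i^2 = j^2 = -1$ and $ij = -ji$; a standard dimension count (any fourth orthogonal element would force a nontrivial linear relation with $i, j, ij$) then forces $\dim_{\mathds{k}} V \leq 3$, and yields that $D$ is $\mathds{k}$, $\mathds{k}[\sqrt{-1}]$, or the quaternion algebra.

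The main obstacle is verifying that the sign and square-root manipulations of the classical $\mathbb{R}$-valued proof — particularly the step writing a negative element as $-s^2$ for some $s \in \mathds{k}$ and the Gram-Schmidt normalization — carry over to an arbitrary real closed field. This reduces to the two defining features of real closedness: $\mathds{k}$ carries a unique field ordering, and every positive element of $\mathds{k}$ admits a square root (equivalently, $\mathds{k}[\sqrt{-1}]$ is algebraically closed). Granted these two facts, Frobenius's argument transcribes essentially verbatim, completing the proof.
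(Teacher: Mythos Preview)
Your proposal is correct and follows exactly the route the paper takes: the paper's proof simply cites Palais's proof of the Frobenius theorem and remarks that it applies verbatim over any real closed field, while you have sketched that very argument in detail. One minor slip worth fixing: you have interchanged ``former'' and ``latter'' when splitting on the center---it is the case $Z(D)=\mathds{k}[\sqrt{-1}]$ that makes $D$ a central division algebra over an algebraically closed field and forces $D=\mathds{k}[\sqrt{-1}]$, while $Z(D)=\mathds{k}$ is the case handled by the Frobenius-style subspace $V$---but the mathematics is otherwise sound.
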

\begin{proof}
The proof of the Frobenius theorem on division algebras given in \cite{Pal} applies verbatim to any real closed field, showing that there are three equivalence classes of finite division rings occurring in this case: $\mathds{k}$, $\mathds{k}\lbrack\sqrt{-1}\rbrack$, and the ring of quaternions. This completes the proof.
\end{proof}

\subsection{Examples}

Let us fix a field $\mathds{k}$, which satisfies the condition of theorem \ref{thm:ArtinSchreier}. Further, let us assume that $\mathds{k}$ is perfect, as we are mainly interested in developing our theory over such fields. This means that $\mathds{k}$ is either algebraically closed or real closed, as a separably closed field is algebraically closed if and only if it is perfect.

\begin{Example}\label{ex:FpalgZp}
Let $\mathds{k} = \overline{\mathbb{F}}_p$ for a prime $p$. We fix $\mathcal{C}$ the category of $\mathbb{Z}/p$-graded finite-dimensional $\mathds{k}$-vector spaces. As this is a fusion 1-category, we find using proposition \ref{prop:modulesemisimple} that $\mathbf{Mod}(\mathcal{C})$ is a semisimple 2-category. Furthermore, by proposition 4.1 of \cite{EO} (see also \cite{Nat}), finite semisimple right $\mathcal{C}$-module 1-categories are classified by a subgroup $H\subseteq \mathbb{Z}/p$ together with a class in $H^2(H;\mathds{k}^{\times})$. However, those module 1-categories corresponding to $H=\mathbb{Z}/p$ are not separable. So we find that $\mathbf{Mod}(\mathcal{C})$ has only one equivalence class of simple objects, whence it is a finite semisimple 2-category, which is not locally separable.
\end{Example}

\begin{Theorem}\label{thm:modulefinitesemisimple}
Let $\mathcal{C}$ be a multifusion 1-category over an algebraically closed field or a real closed field. The 2-category of separable right $\mathcal{C}$-module 1-categories is a finite semisimple 2-category.
\end{Theorem}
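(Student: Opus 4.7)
The plan is to reduce the finiteness claim to a deformation-theoretic statement about monoidal functors between finite semisimple 1-categories, following the strategy of section 9.1 of \cite{EGNO}. Proposition \ref{prop:modulesemisimple} already guarantees that $\mathbf{Mod}(\mathcal{C})$ is locally finite, so the remaining content is the finiteness of the set of equivalence classes of simple objects. By Ostrik's theorem, a simple object of $\mathbf{Mod}(\mathcal{C})$ amounts to a finite semisimple $\mathds{k}$-linear 1-category $\mathcal{M}$ together with a right $\mathcal{C}$-action $\rho:\mathcal{C}\to \mathrm{Fun}_{\mathds{k}}(\mathcal{M},\mathcal{M})^{\otimes op}$ whose representing algebra in $\mathcal{C}$ is separable.

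I would then decompose the counting problem into two independent tasks. First, I would show that only finitely many underlying 1-categories $\mathcal{M}$ can occur. Up to equivalence, $\mathcal{M}$ is determined by the finite multiset of division $\mathds{k}$-algebras that appear as endomorphism rings of its simple objects. Over an algebraically closed field the only such division algebra is $\mathds{k}$ itself, while over a real closed field Lemma \ref{lem:divisionalgebrarealclosed} gives exactly three possibilities. A Frobenius--Perron type dimension comparison between $\mathcal{M}$ and $\mathcal{C}$ bounds the total rank of $\mathcal{M}$, so in either case only finitely many $\mathcal{M}$ can arise.

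Second, for each fixed $\mathcal{M}$, I would parametrize separable module structures on $\mathcal{M}$ as $\mathds{k}$-points of a finite type affine scheme $X$ cut out by the monoidal-functor axioms, on which the algebraic group $G=\mathrm{Aut}_{\mathds{k}}(\mathcal{M})$ of $\mathds{k}$-linear auto-equivalences acts, so that equivalence classes of separable module structures correspond to $G$-orbits in the separable locus. The infinitesimal deformations of $\rho$ are controlled by Davydov--Yetter cohomology, and separability of the module category supplies the Maschke-style splitting map needed to force $H^{\geq 1}_{\mathrm{DY}}(\rho)$ to vanish. This makes every orbit in the separable locus Zariski-open, and finite-typeness of $X$ then forces the number of orbits to be finite.

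The main obstacle is the real closed case: openness of algebraic group orbits is most naturally expressed over an algebraically closed base field. The natural remedy is to base change to $\mathds{k}[\sqrt{-1}]$, run the standard argument of \cite{EGNO} there, and descend along the $\mathrm{Gal}(\mathds{k}[\sqrt{-1}]/\mathds{k})$-action; Lemma \ref{lem:divisionalgebrarealclosed} is precisely what keeps the descent data under control. A secondary but essential subtlety is that in positive characteristic the vanishing of Davydov--Yetter cohomology genuinely requires the separability hypothesis on the module category, which is why the theorem concerns $\mathbf{Mod}(\mathcal{C})$ rather than the larger 2-category of all finite semisimple right $\mathcal{C}$-module 1-categories, as illustrated by Example \ref{ex:FpalgZp}.
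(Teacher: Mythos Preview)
Your overall strategy matches the paper's: reduce to finitely many underlying finite semisimple 1-categories $\mathcal{M}$, then show that on each fixed $\mathcal{M}$ the separable module structures form finitely many orbits because Davydov--Yetter $H^2$ vanishes. Two points diverge from the paper and one of them is a genuine gap.

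\textbf{Bounding the underlying $\mathcal{M}$.} You invoke a Frobenius--Perron comparison to bound the rank of $\mathcal{M}$. The paper instead uses the purely combinatorial fact (Proposition~\ref{prop:weak346}, i.e.\ the weak-based version of \cite[3.4.6]{EGNO}) that a weak based ring of finite rank has only finitely many irreducible $\mathbb{Z}_+$-modules; combined with the finitely many division algebras over $\mathds{k}$ this bounds the possible $\mathcal{M}$. This is cleaner and avoids any question about whether the usual Frobenius--Perron machinery is available over a real closed field; your sketch does not say which FP-dimension inequality you mean or why it holds in this generality.

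\textbf{The real closed case.} Here your approach is genuinely different and, as written, incomplete. The paper does \emph{not} base change to $\mathds{k}[\sqrt{-1}]$ and descend. Instead it works directly over the real closed field using real algebraic geometry: one passes from the Zariski topology to the Euclidean topology on $\mathds{k}^{\times N}$, uses Nash-function arguments (Propositions 2.9.7 and 3.3.11 of \cite{BCR}) to show that the surjectivity of $(dl_x)_e$ forces the orbit to contain a Euclidean-open neighbourhood of $x$, and then concludes finiteness from the fact that a semi-algebraic set has finitely many semi-algebraically connected components. Your proposed descent argument would have to explain why a single open $G_{\overline{\mathds{k}}}$-orbit in $X(\overline{\mathds{k}})$ meets $X(\mathds{k})$ in only finitely many $G(\mathds{k})$-orbits; this is governed by $H^1(\mathrm{Gal}(\overline{\mathds{k}}/\mathds{k}),\mathrm{Stab}_G(x))$, which you do not compute, and Lemma~\ref{lem:divisionalgebrarealclosed} (a statement about division algebras) does not by itself control these cohomology sets. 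The example following Theorem~\ref{thm:nodeformation} in the paper shows exactly how this kind of descent fails over $\mathbb{Q}$, so some real-closed-specific input is mandatory; the paper's input is the semi-algebraic finiteness theorem, not Galois cohomology.
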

\begin{proof}
By theorem \ref{thm:modulenearfinitesemisimple}, it is enough to prove that there are finitely many equivalence classes of indecomposable separable right $\mathcal{C}$-module 1-categories. We mimick the argument of corollary 9.1.6 of \cite{EGNO}.

By proposition 3.4.6 of \cite{EGNO}, there are only finitely many indecomposable $\mathbb{Z}_+$-modules over $Gr(\mathcal{C})$, the Grothendieck ring of $\mathcal{C}$. Moreover, as there are only finitely many equivalence classes of finite division algebras over $\mathds{k}$, there are only finitely many non-equivalent ways to categorify every such $\mathbb{Z}_+$-module. Therefore, it is enough to prove that for any finite semisimple 1-category $\mathcal{M}$, there are, up to equivalence, only finitely many separable right $\mathcal{C}$-module structures on $\mathcal{M}$. In order to do so, we will reduce this problem to a question about tensor functors, and appeal to theorem \ref{thm:nodeformation} below.

Supplying a right $\mathcal{C}$-module structure on $\mathcal{M}$ is equivalent to giving the data of a tensor functor $F:\mathcal{C}^{\otimes op}\rightarrow Fun(\mathcal{M},\mathcal{M})$. Observe that the right hand-side is a multifusion 1-category as $\mathds{k}$ is perfect. If the chosen right $\mathcal{C}$-module structure on $\mathcal{M}$ is separable, we claim that the tensor functor $F$ above exhibits $Fun(\mathcal{M},\mathcal{M})$ as a separable $\mathcal{C}^{\otimes op}\boxtimes\mathcal{C}$-module 1-category. Namely, proposition 2.4.10 of \cite{DSPS13} gives an equivalence $$Fun(\mathcal{M},\mathcal{M})\simeq Fun(\mathcal{M},\mathbf{Vect}_{\mathds{k}})\boxtimes \mathcal{M},$$ as $\mathcal{C}$-$\mathcal{C}$-bimodule 1-categories, which we can view as an equivalence of right $\mathcal{C}^{\otimes op}\boxtimes\mathcal{C}$-module 1-categories. Now, because the right $\mathcal{C}$-module 1-category $\mathcal{M}$ is separable, there exists a separable algebra $A$ in $\mathcal{C}$ such that $\mathcal{M}\simeq\mathrm{Mod}_{\mathcal{C}}(A)$ as right $\mathcal{C}$-module 1-categories. Combining proposition 2.4.9 and corollary 2.4.13 of \cite{DSPS13}, there is an equivalence of left $\mathcal{C}$-module 1-categories between $Fun(\mathcal{M},\mathbf{Vect}_{\mathds{k}})$ and the 1-category of right $A$-modules in $\mathcal{C}$. Equivalently, there is an equivalence of right $\mathcal{C}^{\otimes op}$-module 1-categories $Fun(\mathcal{M},\mathbf{Vect}_{\mathds{k}})\simeq \mathrm{Mod}_{\mathcal{C}^{\otimes op}}(A')$, where $A'$ is $A$ viewed as a separable algebra in $\mathcal{C}^{\otimes op}$. Finally, proposition 3.8 of \cite{DSPS14} shows that 

$$Fun(\mathcal{M},\mathbf{Vect}_{\mathds{k}})\boxtimes \mathcal{M}\simeq Mod_{\mathcal{C}^{\otimes op}\boxtimes\mathcal{C}}(A'\boxtimes A),$$ and it is straightforward to check that $A'\boxtimes A$ is a separable algebra in $\mathcal{C}^{\otimes op}\boxtimes\mathcal{C}$.

Consequently, it is enough to prove that there are only finitely many equivalence classes of tensor functors $F:\mathcal{C}^{\otimes op}\rightarrow Fun(\mathcal{M},\mathcal{M})$, which exhibits the right hand-side as a separable right $\mathcal{C}^{\otimes op}\boxtimes\mathcal{C}$-module 1-category. This is precisely the content of theorem \ref{thm:nodeformation}. This proves that there are finitely many equivalences classes of indecomposable separable right $\mathcal{C}$-module 1-categories.
\end{proof}

\begin{Corollary}\label{cor:separablefinite}
Over an algebraically closed field or a real closed field, a semisimple 2-category is finite if and only if it is compact.
\end{Corollary}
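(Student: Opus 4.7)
The forward implication is immediate from the definitions: if a semisimple 2-category $\mathfrak{C}$ is finite, then by definition \ref{def:finitesemisimple} it is locally finite with finitely many equivalence classes of simple objects, so $\pi_0(\mathfrak{C})$, being a quotient of this finite set, is automatically finite, whence $\mathfrak{C}$ is compact.

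For the reverse implication, the plan is to reduce to Theorem \ref{thm:modulefinitesemisimple} via the structure theorem for compact semisimple 2-categories. So suppose $\mathds{k}$ is either algebraically closed or real closed and that $\mathfrak{C}$ is a compact semisimple 2-category over $\mathds{k}$. By Theorem \ref{thm:semisimplemodule}, we may choose a finite semisimple tensor 1-category $\mathcal{C}$ over $\mathds{k}$ together with an equivalence $\mathfrak{C}\simeq\mathbf{Mod}(\mathcal{C})$. The key observation is that, by the definition recalled at the beginning of section \ref{sub:sepmod}, a finite semisimple tensor 1-category over an algebraically closed or real closed field is precisely a multifusion 1-category, so $\mathcal{C}$ falls within the hypotheses of Theorem \ref{thm:modulefinitesemisimple}.

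Applying that theorem, $\mathbf{Mod}(\mathcal{C})$ is a finite semisimple 2-category, and transporting this along the equivalence yields that $\mathfrak{C}$ itself is finite, completing the argument.

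There is no genuine obstacle here: the entire content of the corollary has already been packaged into the two theorems \ref{thm:semisimplemodule} and \ref{thm:modulefinitesemisimple}, and the corollary simply observes that, in the special case of the base field being algebraically closed or real closed, the former lands in the hypotheses of the latter. Accordingly, the ``proof'' should consist of little more than a two-sentence assembly of these two results, together with the trivial forward direction.
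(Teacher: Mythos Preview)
Your proposal is correct and matches the paper's intended argument: the corollary is stated without proof in the paper, precisely because it is meant to follow immediately from combining Theorem~\ref{thm:semisimplemodule} with Theorem~\ref{thm:modulefinitesemisimple}, together with the observation that over such fields a finite semisimple tensor 1-category is a multifusion 1-category. Your write-up spells out exactly these steps, including the trivial forward implication.
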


Over algebraically closed fields of positive characteristic, example \ref{ex:FpalgZp} demonstrates that local separability is an independent property. When $char(\mathds{k})=0$, more can be said.

\begin{Corollary}\label{cor:char0equivalence}
Over an algebraically closed field of characteristic zero or a real closed field, every compact semisimple 2-category is finite and locally separable.
\end{Corollary}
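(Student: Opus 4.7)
The plan is to derive the corollary by assembling pieces already in hand rather than proving anything new. Finiteness is immediate: the hypothesis places us in exactly the setting of Corollary \ref{cor:separablefinite}, so any compact semisimple 2-category $\mathfrak{C}$ over an algebraically closed field or a real closed field is automatically finite.

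For local separability, I would fix a simple object $X$ of $\mathfrak{C}$ and examine $\mathrm{End}_{\mathfrak{C}}(X)$. By Definition \ref{def:semisimple} this is a fusion 1-category, in particular a finite semisimple tensor 1-category. The key observation is that both allowed classes of base field have characteristic zero: this is trivial in the algebraically closed case, and for real closed fields it is part of Theorem \ref{thm:ArtinSchreier}. The Remark immediately following the definition of a separable tensor 1-category recalls from \cite{DSPS13} that every finite semisimple tensor 1-category over a field of characteristic zero is separable. Applying this to $\mathrm{End}_{\mathfrak{C}}(X)$ gives separability for each simple $X$, which is exactly the condition for $\mathfrak{C}$ to be locally separable.

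The only subtlety worth flagging is that one must use the version of the \cite{DSPS13} separability theorem that holds over \emph{any} field of characteristic zero, not merely over an algebraically closed one, so that it can be invoked uniformly in the real closed case. No analogue of the delicate deformation-theoretic arguments of Theorem \ref{thm:modulefinitesemisimple} or Theorem \ref{thm:nodeformationmulti} is required here; the corollary is essentially a clean concatenation of Corollary \ref{cor:separablefinite} with the cited characteristic-zero separability result.
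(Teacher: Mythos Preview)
Your proposal is correct and follows essentially the same line as the paper's proof. The paper's argument is a single sentence invoking corollary 2.6.8 of \cite{DSPS13} (the same characteristic-zero separability result you cite via the Remark), with finiteness left implicit from the immediately preceding Corollary~\ref{cor:separablefinite}; your write-up simply unpacks these two ingredients more explicitly and correctly notes that real closed fields have characteristic zero so that the \cite{DSPS13} result applies uniformly.
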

\begin{proof}
Over a field of characteristic zero, every multifusion 1-category is separable by corollary 2.6.8 of \cite{DSPS13}.
\end{proof}

\begin{Example}\label{ex:vectRC}
We work over $\mathbb{R}$. Lemma \ref{lem:divisionalgebrarealclosed} implies that the simple object of $\mathbf{Mod}(\mathbf{Vect}_{\mathbb{R}})$ are given by $\mathbf{Vect}_{\mathbb{R}}$, $\mathbf{Vect}_{\mathbb{C}}$, and $\mathbf{Vect}_{\mathbb{H}}$. But, note that $\mathbf{Mod}(\mathbf{Vect}_{\mathbb{C}})$ has only one simple object, $\mathbf{Vect}_{\mathbb{C}}$ itself.
\end{Example}

Let us conclude by examining an example of a compact semisimple 2-category over a separably closed but not algebraically closed field.

\begin{Example}
Let $\mathds{k} = \mathbb{F}_p(x)^{sep}$, the separable closure of the function field of $\mathbb{F}_p$. We claim that $\mathbf{Mod}(\mathbf{Vect}_{\mathds{k}})$ has only one equivalence class of simple object, $\mathbf{Vect}_{\mathds{k}}$ itself. Namely, as $\mathds{k}$ is separably closed, so is every finite extension. Further, the Brauer group of a separably closed field is trivial. Thus, the equivalence classes of indecomposable finite semisimple right $\mathbf{Vect}_{\mathds{k}}$-module 1-categories are given by the 1-categories $\mathbf{Vect}_{\mathds{k'}}$ for all finite extensions $\mathds{k}'$ of $\mathds{k}$. But, if $\mathds{k}'$ is a non-trivial extension, the right $\mathbf{Vect}_{\mathds{k}}$-module 1-category $\mathbf{Vect}_{\mathds{k'}}$ is clearly not separable.
\end{Example}

This leads us to ask the following question.

\begin{Question}
Over a separably closed field, is every compact semisimple 2-category finite?
\end{Question}

\subsection{Absence of Deformations for Tensor Functors}\label{sub:nodeformations}

Throughout, we work over $\mathds{k}$ a perfect field. Let $\mathcal{C}$, $\mathcal{D}$ be finite semisimple tensor 1-categories, and $F$ a tensor functor $\mathcal{C}\rightarrow \mathcal{D}$. The tensor functor $F$ endows $\mathcal{D}$ with the structure of a $\mathcal{C}$-$\mathcal{C}$-bimodule 1-category, or equivalently of a right $\mathcal{C}^{\otimes op}\boxtimes\mathcal{C}$-module 1-category. 

Following \cite{Da}, \cite{Ye98}, and \cite{Ye01}, we define $C^{\bullet}_{DY}(F)$, the Davydov-Yetter cochain complex of $F$ by setting for every integer $n\geq 0$: $$C^n_{DY}(F):= \textrm{Nat}(\otimes^{n}\circ F^{\times n},\otimes^{n}\circ F^{\times n});$$ \begin{align*}\delta(f)_{X_0,...,X_n}:=& Id_{F(X_0)}\otimes f_{X_1,...,X_n}\\ & +\sum_{i=1}^n (-1)^i f_{X_0,...,X_{i-1}\otimes X_i,...,X_n}\\&+(-1)^{n+1}f_{X_0,...,X_{n-1}}\otimes Id_{F(X_n)}.\end{align*}
The cohomology groups of the cochain complex $C^{\bullet}_{DY}(F)$ are denoted by $H^{\bullet}_{DY}(F)$, and called the Davydov-Yetter cohomology groups of $F$. These cohomology groups describe the deformations of the tensor functor $F$.

\begin{Proposition}\label{prop:vanishingDY}
Provided that $\mathcal{D}$ is separable when viewed as a right $\mathcal{C}^{\otimes op}\boxtimes\mathcal{C}$-module 1-category, the group $H^n_{DY}(F)$ is trivial for every integer $n\geq 1$.
\end{Proposition}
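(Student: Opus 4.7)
The plan is to recognise the Davydov--Yetter complex $C^\bullet_{DY}(F)$ as computing an Ext group in the abelian 1-category of $\mathcal{C}$-$\mathcal{C}$-bimodule endofunctors of $\mathcal{D}$, and then to use the separability hypothesis to force the positive-degree Ext to vanish. Throughout, we view $\mathcal{D}$ as a $\mathcal{C}$-$\mathcal{C}$-bimodule 1-category by means of the tensor functor $F$, equivalently as a right $\mathcal{C}^{\otimes op}\boxtimes \mathcal{C}$-module 1-category.

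First, I would establish an identification $$H^n_{DY}(F)\ \cong\ \mathrm{Ext}^n_{\mathrm{Fun}_{\mathcal{C}^{\otimes op}\boxtimes\mathcal{C}}(\mathcal{D},\mathcal{D})}\bigl(\mathrm{Id}_{\mathcal{D}},\mathrm{Id}_{\mathcal{D}}\bigr),$$ where $\mathrm{Fun}_{\mathcal{C}^{\otimes op}\boxtimes\mathcal{C}}(\mathcal{D},\mathcal{D})$ is the abelian 1-category of $\mathcal{C}$-$\mathcal{C}$-bimodule endofunctors of $\mathcal{D}$. This reinterpretation is essentially due to Davydov and Yetter; it is obtained by producing a bar-type resolution of the unit bimodule endofunctor $\mathrm{Id}_{\mathcal{D}}$ whose $n$-th term is built out of functors of the form $-\otimes F(X_1)\otimes\cdots\otimes F(X_n)\otimes -$ (coended over the $X_i\in\mathcal{C}$), and then observing that applying $\mathrm{Hom}(\mathrm{Id}_{\mathcal{D}},-)$ to this resolution recovers the cosimplicial complex $C^\bullet_{DY}(F)$ with precisely the differential $\delta$ written above.

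Second, I would invoke the intrinsic characterisation of separability recalled in the excerpt (theorem 1.5.4 of \cite{DSPS13}): the assumption that $\mathcal{D}$ is separable as a right $\mathcal{C}^{\otimes op}\boxtimes\mathcal{C}$-module 1-category is exactly the statement that the 1-category $\mathrm{Fun}_{\mathcal{C}^{\otimes op}\boxtimes\mathcal{C}}(\mathcal{D},\mathcal{D})$ is finite semisimple. In a finite semisimple abelian 1-category every short exact sequence splits, so all higher Ext groups between any two objects vanish, giving $H^n_{DY}(F)=0$ for $n\geq 1$ as required.

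The main obstacle is the first step: one must construct a suitable projective (or more precisely, acyclic-for-$\mathrm{Hom}(\mathrm{Id}_{\mathcal{D}},-)$) resolution of $\mathrm{Id}_{\mathcal{D}}$ in the category of bimodule endofunctors and verify that the induced complex of natural transformations is exactly the Davydov--Yetter complex with its sign conventions. Once this comparison is made rigorous --- which involves careful bookkeeping with the coend description of bimodule endofunctors and the multiplication coming from the tensor structure on $\mathcal{C}$ and the action on $\mathcal{D}$ --- the separability hypothesis closes the argument almost immediately.
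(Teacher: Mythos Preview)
Your approach is correct and is essentially the same strategy as the paper's: identify Davydov--Yetter cohomology with an $\mathrm{Ext}$-group in an abelian category that the separability hypothesis forces to be semisimple, whence positive-degree cohomology vanishes. The paper simply outsources your first step to Corollary~3.18 and Proposition~3.10 of \cite{GHS}, which package the comonadic/bar-resolution argument you sketch and identify the relevant category as $Z_F\text{-}\mathrm{Mod}\simeq\mathcal{Z}_{\mathcal{C}}(\mathcal{D})\simeq\mathrm{Fun}_{\mathcal{C}^{\otimes op}\boxtimes\mathcal{C}}(\mathcal{C},\mathcal{D})$; the paper then checks this is finite semisimple under the hypothesis.

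One small point worth flagging: the category you work in, $\mathrm{Fun}_{\mathcal{C}^{\otimes op}\boxtimes\mathcal{C}}(\mathcal{D},\mathcal{D})$, is not literally the relative Drinfel'd center $\mathcal{Z}_{\mathcal{C}}(\mathcal{D})\simeq\mathrm{Fun}_{\mathcal{C}^{\otimes op}\boxtimes\mathcal{C}}(\mathcal{C},\mathcal{D})$ that appears in \cite{GHS} and in the paper---in general it is strictly larger, since a $\mathcal{C}$-$\mathcal{C}$-bimodule endofunctor of $\mathcal{D}$ need not be of the form $Z\otimes-$ for $Z\in\mathcal{Z}_{\mathcal{C}}(\mathcal{D})$. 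This does not break your argument: your category being semisimple is exactly the intrinsic characterisation of the separability hypothesis, and once it is semisimple the bar resolution (which is always exact as a complex of plain functors via the extra degeneracy) remains exact after applying $\mathrm{Hom}(\mathrm{Id}_{\mathcal{D}},-)$, so the cohomology vanishes without needing to know in advance that the terms are projective. But if you want the identification with $\mathrm{Ext}$ to hold unconditionally (i.e.\ before knowing semisimplicity), the relative center is the correct home, and that is what \cite{GHS} establishes.
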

\begin{proof}
We claim that the argument of corollary 3.18 of \cite{GHS} applies verbatim in our more general context. Namely, the only thing we have to check is that $Z_F$-$\mathrm{Mod}$ is finite semisimple (see subsection 3.3 of \cite{GHS} for the definition). In order to do so, observe that $Z_F$-$\mathrm{Mod}\simeq \mathcal{Z}_{\mathcal{C}}(\mathcal{D})$ the relative Drinfel'd center by proposition 3.10 of \cite{GHS}. Now, we have assumed that $\mathcal{D}$ is separable as a $\mathcal{C}^{\otimes op}\boxtimes\mathcal{C}$-module 1-category, thus $\mathcal{Z}_{\mathcal{C}}(\mathcal{D})\simeq Fun_{\mathcal{C}^{\otimes op}\boxtimes\mathcal{C}}(\mathcal{C},\mathcal{D})$ is a finite semisimple 1-category, which concludes the proof.
\end{proof}

\begin{Remark}
The condition of proposition \ref{prop:vanishingDY} is always satisfied if $\mathcal{C}$ is separable. Namely, under this hypothesis, $\mathcal{C}^{\otimes op}\boxtimes \mathcal{C}$ is separable by corollary 2.5.11 of \cite{DSPS13}. Thus, as $\mathcal{D}$ is finite semisimple, it it separable as a right $\mathcal{C}^{\otimes op}\boxtimes \mathcal{C}$-module 1-category.
\end{Remark}

Our goal is now to generalize theorem 2.31 of \cite{ENO} (see also theorem 9.1.5 of \cite{EGNO}), which holds over an algebraically closed field of characteristic zero. We wish to point out that it is asserted in section 9 of \cite{ENO} that their theorem 2.31 holds over an arbitrary algebraically closed field, so long as $\mathcal{C}$ is separable. When the base field is algebraically closed, our proof is inspired by that given in section 9 of \cite{EGNO} in which they work over an algebraically closed field of characteristic zero, though we give a detailed explanation as to why certain orbits of an action of an algebraic group are open. When the base field is real closed, the setup of classical algebraic geometry is no longer adequate, and we need to use ideas coming from real algebraic geometry (see \cite{BCR}).

\begin{Theorem}\label{thm:nodeformation}
Let $\mathds{k}$ be an algebraically closed, or a real closed field. A tensor functor $F:\mathcal{C}\rightarrow \mathcal{D}$ satisfying the hypothesis of proposition \ref{prop:vanishingDY} does not admit any non-trivial deformation. In particular, the number of such functors up to equivalence (for fixed source and target) is finite.
\end{Theorem}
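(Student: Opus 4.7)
The plan is to realize tensor functors $\mathcal{C}\to\mathcal{D}$, with a fixed underlying additive functor $F_0$, as the $\mathds{k}$-points of an affine variety $V$ carrying an algebraic group action whose orbits are precisely the equivalence classes of tensor functors. Since both $\mathcal{C}$ and $\mathcal{D}$ are finite semisimple, only finitely many isomorphism classes of underlying functor $F_0$ can occur (their multiplicities on simples are constrained by the Grothendieck ring of $\mathcal{C}$), so it is enough to fix one such $F_0$. The tensor structures on $F_0$ are families of isomorphisms $J_{X,Y}\colon F_0(X)\otimes F_0(Y)\xrightarrow{\cong} F_0(X\otimes Y)$ satisfying the hexagon axiom; working in coordinates provided by fixed bases of $\mathrm{Hom}$-spaces on simples, $V$ is cut out inside an affine space by finitely many polynomial equations. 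The gauge group $G=\prod_{X} GL(F_0(X))$, with $X$ running over a set of simple representatives in $\mathcal{C}$, acts algebraically on $V$, and two points of $V$ give equivalent tensor functors precisely when they lie in the same $G$-orbit.

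Next I would convert the vanishing of Davydov--Yetter cohomology into openness of these orbits. Unwinding the definitions, the Zariski tangent space $T_J V$ at a point $J\in V$ is naturally identified with the space $Z^2_{DY}(F)$ of $2$-cocycles (these are exactly the first-order perturbations of $J$ still satisfying the linearised hexagon), while the differential at the identity of the orbit map $G\to V$, $g\mapsto g\cdot J$, is the Davydov--Yetter coboundary $\delta\colon C^1_{DY}(F)\to C^2_{DY}(F)$, whose image is $B^2_{DY}(F)$. Thus the tangent space to the orbit $G\cdot J$ sits inside $T_J V$ with quotient $H^2_{DY}(F)$, and this quotient vanishes by Proposition \ref{prop:vanishingDY}. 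Since $G\cdot J$ is smooth as a homogeneous space and has tangent space of full dimension at $J$, a dimension count forces $V$ to be smooth at $J$ and the orbit $G\cdot J$ to be open in $V$.

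To finish, if every $G$-orbit in $V$ is open, the orbits form a disjoint open cover of $V$, so their number is bounded by the number of connected components of $V$. Over an algebraically closed field this is immediate, as $V$ is a Noetherian scheme and hence has finitely many irreducible components. Over a real closed field, $V(\mathds{k})$ is a semi-algebraic subset of some $\mathds{k}^N$ and therefore, by a foundational finiteness theorem in real algebraic geometry (see \cite{BCR}), has only finitely many semi-algebraically connected components. In either case the number of orbits is finite, which yields the second assertion. The rigidity statement is obtained along the way: any deformation of $F$ over a connected parameter space gives a connected family of points of $V$ trapped inside a single open orbit, so it is gauge-equivalent to the constant family $F$.

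The main obstacle is the real closed case, where the Zariski topology is too coarse and one must work instead in the semi-algebraic category. Both the smoothness of $G$-orbits and the dimension-theoretic identification of open orbits need semi-algebraic, i.e. Nash-manifold, counterparts, and the finiteness of connected components furnished by the Tarski--Seidenberg principle replaces the Noetherianity argument used over an algebraically closed field. Once these real-algebraic analogues are in place, the algebraically closed and real closed cases proceed in parallel.
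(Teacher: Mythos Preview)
Your approach is essentially the same as the paper's: parametrise tensor structures on a fixed underlying functor by an affine variety, let the gauge group act, identify the differential of the orbit map with the Davydov--Yetter coboundary so that $H^2_{DY}=0$ forces orbits to be open, and conclude finiteness from finiteness of components. Two places deserve more care.

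First, over a real closed field your claim that ``only finitely many isomorphism classes of underlying functor $F_0$ can occur, their multiplicities on simples being constrained by the Grothendieck ring'' is incomplete. A $\mathbb{Z}_+$-ring homomorphism $Gr(\mathcal{C})\to Gr(\mathcal{D})$ determines $F_0$ only on objects; since $\mathrm{End}(C_i)$ can be $\mathds{k}$, $\mathds{k}[\sqrt{-1}]$, or the quaternions, one must also choose $\mathds{k}$-algebra maps $\mathrm{End}(C_i)\to\mathrm{End}(F_0(C_i))$, and two such choices give isomorphic functors only when they are conjugate. The paper handles this with a case-by-case argument (Skolem--Noether for central simple targets, and a direct diagonalisation when the target is matrices over $\mathds{k}[\sqrt{-1}]$). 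Without this, the finiteness of underlying linear functors is not established.

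Second, over an algebraically closed field of positive characteristic, not every point of $V$ corresponds to a functor satisfying the separability hypothesis of Proposition~\ref{prop:vanishingDY}, so you cannot assert that \emph{every} $G$-orbit is open. The paper restricts to the open subvariety $X'\subseteq V$ consisting of the union of those orbits on which $(dl_x)_e$ is surjective, and uses quasi-compactness of $X'$ to conclude. Your disjoint-open-cover argument should be run on $X'$ rather than on all of $V$.
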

\begin{proof}
\textbf{Case 1: $\mathds{k}$ is algebraically closed}

\textit{Defining the algebraic group action:} We may assume that $\mathcal{C}$ and $\mathcal{D}$ are skeletal. We write $C_i$, $i=0,...,n$ for the set of simple objects of $\mathcal{C}$. We think of the assignment $C_i\mapsto D_i:=F(C_i)$ as fixed. As $\mathds{k}$ is algebraically closed, this is exactly the data of a linear functor $F':\mathcal{C}\rightarrow \mathcal{D}$. A choice of tensor structure for the functor $F':\mathcal{C}\rightarrow \mathcal{D}$ corresponds to the choice of isomorphism $\phi_{C_i,C_j}:F'(C_i)\otimes F'(C_j)\rightarrow F'(C_i\otimes C_j)$ in finite dimensional $\mathds{k}$-vector space for every $i,j$ satisfying finitely many polynomial conditions. Thus, there is a (potentially non-irreducible) variety $X$ over $\mathds{k}$ whose closed points are exactly the possible choices of tensor structure on $F'$.

Similarly, we define $G$ to be the algebraic group over $\mathds{k}$ of natural isomorphisms $\lambda:F\cong F$. As the underlying scheme of $G$ is again a (potentially non-irreducible) variety, it is reduced, which implies that $G$ is smooth (see  proposition 1.18 of \cite{Mil}). The algebraic group $G$ acts on the variety $X$ by sending $(\lambda, \{\phi_{C_i,C_j}\}_{i,j})$ in $G\times X$ to $\{\lambda_{C_i\otimes C_j}\phi_{C_i,C_j}(\lambda_{C_i}^{-1}\otimes \lambda_{C_j}^{-1})\}_{i,j}$ in $X$.

\textit{Openness of orbits:} Fix $x$ a closed point of $X$, and write $F_x$ for the corresponding tensor functor. We wish to consider the action map $l_x:G\rightarrow X$ given by sending $g$ to $g\cdot x$. The differential of $l_x$ at $e\in G$ is a map $(dl_x)_e: T_eG\rightarrow T_xX$. It follows from the definition of the Davydov-Yetter chain complex that $T_xX = Z^2_{DY}(F_x)$, the set of 2-cochains, and that $(dl_x)_e (T_eG) = B^2_{DY}(F_x)$, the set of 2-coboundaries. Now, if $F_x$ exhibits $\mathcal{D}$ as a separable right $\mathcal{C}^{\otimes op}\boxtimes\mathcal{C}$-module 1-category, $H^2_{DY}(F_x)=Z^2_{DY}(F_x)/B^2_{DY}(F_x)$ is trivial by proposition \ref{prop:vanishingDY}, so $(dl_x)_e$ is a surjection.

We now wish to prove that every $x$ in $X$ such that $(dl_x)_e$ is a surjection is a smooth point. Let $x$ in $X$ be such a point. We claim that there exists a subvariety $Y$ in $X$ containing $x$, which is stable under the action of $G$, and for which $x$ is a smooth point. Namely, if $x$ is not a smooth point of $X$, it belongs to the singular locus $Y_1$ of $X$, which is stable under the action of $G$. If $x$ is a smooth point of $Y_1$, we are done. If note, $x$ belongs to the singular locus of $Y_1$, etc. This process eventually stops because the singular locus of an irreducible variety has strictly smaller dimension than the original variety. Now, let $Y$ be the desired variety. The inclusion $Y\hookrightarrow X$ induces a commutative diagram $$\begin{tikzcd}
T_eG \arrow[rr, "(dl_y)_e", two heads] \arrow[rd] &                       & T_xX. \\
                                                  & T_xY \arrow[ru, hook] &     
\end{tikzcd}$$ In particular, the map $T_xY\rightarrow T_xX$ is an isomorphism. This implies that the inclusion induces an isomorphism $\mathcal{O}_{Y,x}\cong \mathcal{O}_{X,x}$ of local rings (the first ring is regular), and so $X$ and $Y$ agree locally around $x$, which proves that $x$ is a smooth point of $X$.

We can now put our discussion together to prove the first part of the result. Namely, fix $x$ a closed point of $X$ such that $(dl_x)_e$ is surjective. As $G$ is an algebraic group, this implies that for every closed point $g$ of $G$, the map $(dl_x)_g:T_gG\rightarrow T_{g\cdot x}X$ is surjective. Thanks to the discussion above, this implies that $l_x:G\rightarrow X$ lands in a smooth subvariety of $X$. But, by proposition 10.4 of \cite{Har}, this implies that $l_x$ is smooth, and by corollary 14.34 of \cite{GoWe}, we get that $l_x$ is open. Thus, we find that the set-theoretic image $l_x(G)$, i.e the orbit $G\cdot x$, is open in $X$. In particular, this applies to those closed points $x$ for which the tensor functor $F_x:\mathcal{C}\rightarrow \mathcal{D}$ exhibits $\mathcal{D}$ as a separable right $\mathcal{C}^{\otimes op}\boxtimes\mathcal{C}$-module 1-category, in which case this shows that $F_x$ admits no non-trivial deformations. In particular, this applies to the tensor functor $F$ itself.

\textit{Conclusion:} Let us write $X'$ for the smooth subscheme of $X$ given by the union of the orbit under $G$ of the closed points $x$ of $X$ for which $(dl_x)_e$ is surjective. By definition, we can write $$X' = \bigcup_{x\in X'}G\cdot x.$$ But, as $X'$ is an open subset of a variety, it is quasicompact, so $X'$ is the union of finitely many orbit. The second part of the theorem follows from this last fact by combining it with lemma \ref{lem:Z+homfinite}. Namely, this lemma asserts that there are finitely many homomorphisms of $\mathbb{Z}_+$-rings from $Gr(\mathcal{C})$ to $Gr(\mathcal{D})$, i.e. there are only finitely many assignments $C_i\mapsto D_i$, as we have fixed at the start of the proof, that can be made into tensor functors.

\vspace{1ex}
\noindent \textbf{Case 2: $\mathds{k}$ is a real closed field}

\textit{Deformations of linear functors:}  We may assume that $\mathcal{C}$ and $\mathcal{D}$ are skeletal. We write $C_i$, $i=0,...,n$ for the set of simple objects of $\mathcal{C}$. We fix an assignment $C_i\mapsto D_i$ an objects of $\mathcal{D}$. Now, $\mathds{k}$ is not algebraically closed so this does not determine a linear functor $\mathcal{C}\rightarrow\mathcal{D}$ uniquely. But, we claim that there are only finitely many equivalence classes of such functors. Namely, upgrading the above assignment to a functor amounts to choosing a homomorphism of $\mathds{k}$-algebras $f_i:End(C_i)\rightarrow End(F(C_i))$ for all $i$. Note that it is enough to prove that up to conjugation by an invertible element of $End(F(C_i))$, there are only finitely many such homomorphisms. Namely, if $f_i$ and $f_i'$ are two choices of homomorphisms such that for every $i$, we have $f_i = a_if_i'a_i^{-1}$ for some invertible $a_i$ in $End(F(C_i))$, then the $a_i$ define a natural isomorphism between the functor defined using the homomorphisms $f_i$ and that defined using the homomorphisms $f_i'$.

So it is enough to prove that for every finite dimensional division algebra $A$ over $\mathds{k}$, and finite semisimple $\mathds{k}$-algebra $B$, there are finitely many $\mathds{k}$-algebra homomorphisms $A\rightarrow B$ up to conjugation by invertible elements of $B$. Without loss of generality, we may assume that $B$ is simple, i.e. $B$ is the algebra of $m\times m$-matrices on $\mathds{k}$, $\mathds{k}\lbrack\sqrt{-1} \rbrack$, or the quaternions. Now, if $B$ is the algebra of $m\times m$-matrices on $\mathds{k}$, or the quaternions, the claim follows from the Skolem-Noether theorem (see theorem 2.7.2 of \cite{GS}). Thus, it only remains to check the claim when $B$ is the algebra of $m\times m$-matrices on $\mathds{k}\lbrack\sqrt{-1} \rbrack$. If $A=\mathds{k}$, the statement is trivial. If $A=\mathds{k}\lbrack\sqrt{-1}\rbrack$, then a $\mathds{k}$-algebra homomorphism $f:A\rightarrow B$ is uniquely determined by the image of $\sqrt{-1}$. But $f(\sqrt{-1})^{2}= Id$, so $f(\sqrt{-1})$ is conjugate to a diagonal matrix with entries $\pm\sqrt{-1}$. There are clearly finitely many of those up to conjugation. Finally, if $A$ is the quaternions, let $f':A\otimes \mathds{k}\lbrack\sqrt{-1} \rbrack\rightarrow B$ be the extension of $f$ to a homomorphism of $\mathds{k}\lbrack\sqrt{-1} \rbrack$-algebras. It is enough to prove that there are finitely many $\mathds{k}\lbrack\sqrt{-1} \rbrack$-algebra homomorphisms $A\otimes \mathds{k}\lbrack\sqrt{-1} \rbrack\rightarrow B$ up to conjugation by invertible elements of $B$. But $A\otimes \mathds{k}\lbrack\sqrt{-1}\rbrack\cong Mat_2(\mathds{k}\lbrack\sqrt{-1}\rbrack)$ is simple, so the claim follows again from the Skolem-Noether theorem.

\textit{Defining the algebraic group action:} Let us now fix a $\mathds{k}$-linear functor $F':\mathcal{C}\rightarrow \mathcal{D}$. A choice of tensor structure on $F'$ amounts to the choice of an isomorphism $\phi_{C_i,C_{j}}:F'(X_i)\otimes F'(X_{j})\rightarrow F'(X_i\otimes X_{j})$ in a finite dimensional $\mathds{k}$-vector space for every $i,j$. Furthermore, these isomorphisms have to satisfy certain polynomial equations guaranteeing that $\phi$ is a natural transformation and suitably coherent. Thus, there is a (potentially non-irreducible) Zariski-open subset $X$ of an algebraic set in $\mathds{k}^{\times N}$ for some positive integer $N$, whose points are exactly the possible tensor structures on $F$. Similarly, we define $G$ to be the Zariski-open subset $\mathds{k}^{\times M}$ with $M$ a positive integer, whose points are the natural isomorphisms $\lambda:F\cong F$. Observe that the Zariski-open subset $G$ carries a natural continuous action given by composition. Moreover, $G$ acts on $X$ by sending $(\lambda, \{\phi_{C_i,C_j}\}_{i,j})$ in $G\times X$ to $\{\lambda_{C_i\otimes C_j}\phi_{C_i,C_j}(\lambda_{C_i}^{-1}\otimes \lambda_{C_j}^{-1})\}_{i,j}$ in $X$. This action is continuous in the Zariski topology.

\textit{Openness of orbits:} We claim that $X$ is smooth. To see this, fix a closed point $x\in X$, and write $F_x$ for the corresponding tensor functor. We consider the action map $l_x:G\rightarrow X$ given by sending $g$ to $g\cdot x$. Its differential at $e\in G$ is a map $(dl_x)_e: T_eG\rightarrow T_xX$ between the Zariski tangent spaces. Now, it follows from the definition of the Davydov-Yetter chain complex that $T_xX = Z^2_{DY}(F_x)$ the set of 2-cochains, and that $(dl_x)_e (T_eG) = B^2_{DY}(F_x)$ the set of 2-coboundaries. By corollary 2.6.9 of \cite{DSPS13}, the separability assumption of proposition \ref{prop:vanishingDY} is satisfied, so that $H^2_{DY}(F_x)=Z^2_{DY}(F_x)/B^2_{DY}(F_x)$ is trivial, and $(dl_x)_e$ is surjective.

Using an inductive argument similar to the one we have used in the previous case together with proposition 3.3.14 of \cite{BCR}, we can construct an algebraic subset $Y$ of $X$ containing $x$ as a smooth point, and stable under the action of $G$. This implies that $l_x$ factors through $Y$, so that $(dl_x)_e$ factors through $T_xY$. But $(dl_x)_e$ is surjective, which implies that the canonical inclusion $T_xY\hookrightarrow T_xX$ is an isomorphism. This shows that $Y$ is a Zariski-open subset of $X$ containing $x$, and proves that $X$ is smooth at $x$.

In order to finish our argument, we need to consider a finer topology than the Zariski topology. Namely, as $\mathds{k}$ is a real closed field, it is in particular well-ordered by theorem 1.2.2 of \cite{BCR}. This allows us to consider the Euclidean topology on the spaces $\mathds{k}^{\times N}$ and $\mathds{k}^{\times M}$, which is strictly finer than the Zarisky topology. For the remainder of the proof, we will exclusively work with the Euclidean topology. In particular, we think of the algebraic sets $X$, and $G$ as being endowed with the Euclidean topology, i.e. we think of $X$, and $G$ as semi-algebraic sets in the terminology of \cite{BCR}.

Let us fix a point $x$ in $X$, we will now prove that the orbit of $x$ in $X$ under the action of $G$ is open in the Euclidean topology. As $x$ is a smooth point of an algebraic set, proposition 3.3.11 of \cite{BCR} shows that there exists an open subset $U$ of $x$ in $X$, an open subset $V$ of $\mathds{k}^{\times P}$, and a Nash diffeomorphism $\phi:U\cong V$. (A Nash function between semi-algebraic sets is a smooth semi-algebraic map.) Let us now consider the Nash function $\phi\circ l_x:l_x^{-1}(U)\rightarrow V$, which maps $g$ in $l_x^{-1}(U)\subseteq G$ to $\phi(g\cdot x)$. But $(dl_x)_e$ is surjective, thence it follows from proposition 2.9.7 of \cite{BCR} that the image of $\phi\circ l_x$ contains an open neighborhood of $\phi(x)$. Given that $\phi$ is a Nash diffeomorphism, this implies that the image of $l_x:G\rightarrow X$ contains an open neighborhood of $x$. As $x$ was arbitrary we find that $G\cdot x$ is an open subset of $X$, i.e. the associated tensor functor $F_x$ admits no non-trivial deformations.

\textit{Conclusion:} Proposition 2.2.7 of \cite{BCR} shows that for any $x$ in $X$, $G\cdot x$ is semi-agebraic. As the orbits in $X$ under the action of $G$ are disjoint, we therefore find that $G\cdot x$ is an open and closed semi-algebraic subset of $X$, i.e. a union of semi-algebraically connected components of $X$. But, $$X = \bigcup_{x\in X}G\cdot x,$$ and $X$ is a finite union of semi-algebraically connected components by theorem 2.4.4 of \cite{BCR}. Thus, $X$ is the union of finitely many orbits under the action of $G$. The second part of the statement now follows from lemma \ref{lem:Z+homfinite}.
\end{proof}

\begin{Example}
Let $\mathds{k} = \mathbb{Q}$, and write $\mathcal{C}$ for the separable finite semisimple tensor 1-category of finite dimensional $\mathbb{Z}/2$-graded $\mathbb{Q}$-vector spaces. Then, there are infinitely many non-equivalent tensor functors $\mathcal{C}\rightarrow \mathbf{Vect}_{\mathds{k}}$. Namely, such functors up to equivalence correspond exactly to classes in $H^2(\mathbb{Z}/2;\mathbb{Q}^{\times})\cong \mathbb{Q}^{\times}/(\mathbb{Q}^{\times})^2$, and this group is infinite. In the notation of the proof of theorem \ref{thm:nodeformation}, $X=\mathbb{Q}^{\times}$, $G=\mathbb{Q}^{\times}$, and $G$ acts on $X$ by $(g,x)\mapsto g^2x$. If we view $X$ and $G$ as sets, the orbits of this action are manifestly not open. However, if we view them as schemes, it is possible to show that the orbits are indeed open. The reason for this discrepancy is that schemes over $\mathbb{Q}$ have many closed points which are not $\mathbb{Q}$-points.
\end{Example}

\subsection{Absence of Deformations for Separable Tensor 1-Categories}

Let us assume that $\mathds{k}$ is an algebraically closed field or a real closed field. We begin by briefly explaining how the methods of the previous section can be used to generalize theorem 2.30 of \cite{ENO}, which holds over an algebraically closed field of characteristic zero. Further, over an algebraically closed field of positive characteristic, this statement appears in section 9 of \cite{ENO}, and a proof can be obtained using the techniques developed in their article. We sketch a proof in the algebraically closed case both for completeness, and in order to introduce some notations that will be used in what follows. We emphasize that the real closed case has not been previously considered in the literature.

\begin{Theorem}\label{thm:nodeformationmulti}
Let $\mathcal{C}$ be a separable multifusion 1-category over an algebraically closed or real closed field. Then, $\mathcal{C}$ has no non-trivial deformations. Consequently, the number of equivalence classes of separable multifusion 1-categories with a fixed Grothendieck ring is finite.
\end{Theorem}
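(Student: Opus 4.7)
The plan is to adapt the strategy of Theorem \ref{thm:nodeformation} almost verbatim, replacing deformations of the tensor structure on a fixed linear functor by deformations of the associator of a multifusion category with fixed Grothendieck ring. First, I would fix the abstract Grothendieck ring $R$. There are only finitely many equivalence classes of underlying $\mathds{k}$-linear semisimple categories $\mathcal{A}$ whose simples are indexed by a chosen $\mathbb{Z}_+$-basis of $R$: over an algebraically closed field this is trivial, since every endomorphism division algebra is $\mathds{k}$; over a real closed field, Lemma \ref{lem:divisionalgebrarealclosed} supplies only finitely many possible division algebras per simple. It therefore suffices, for each such $\mathcal{A}$, to show that separable multifusion structures on $\mathcal{A}$ compatible with $R$ have only finitely many equivalence classes.

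Next, I would build the algebro-geometric parameter space. After fixing the tensor bifunctor on objects according to the fusion rules of $R$, a multifusion structure is specified by a collection of associators $\alpha_{X,Y,Z}$ and unit constraints satisfying the pentagon and triangle axioms. These conditions are polynomial, so they cut out an algebraic subset $X$ of a finite-dimensional affine $\mathds{k}$-space on which the algebraic group $G$ of natural automorphisms of the tensor bifunctor $\otimes:\mathcal{A}\times\mathcal{A}\to\mathcal{A}$ acts by the usual gauge twist on associators. Equivalence classes of multifusion structures on $\mathcal{A}$ correspond, up to the finite ambiguity given by basis-permuting automorphisms of $R$, to $G$-orbits in $X$. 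The key input is Proposition \ref{prop:vanishingDY} applied to the identity tensor functor $\mathrm{id}:\mathcal{C}_x\to\mathcal{C}_x$ at each point $x\in X$: since $\mathcal{C}_x$ is separable by hypothesis, one has $H^n_{DY}(\mathrm{id}_{\mathcal{C}_x})=0$ for all $n\geq 1$. A direct unwinding of the complex $C^\bullet_{DY}(\mathrm{id})$ identifies $T_xX$ with $Z^3_{DY}(\mathrm{id}_{\mathcal{C}_x})$ (the linearized pentagon is precisely the $3$-cocycle condition) and $(dl_x)_e(T_eG)$ with $B^3_{DY}(\mathrm{id}_{\mathcal{C}_x})$, so $(dl_x)_e$ is surjective at every $x$.

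The openness of orbits and their finiteness are then concluded by the very arguments used in Theorem \ref{thm:nodeformation}: in the algebraically closed case, the inductive descent into singular loci together with Proposition 10.4 of \cite{Har} and Corollary 14.34 of \cite{GoWe} yields openness of each orbit, and quasi-compactness of $X$ forces only finitely many orbits; in the real closed case, Propositions 3.3.11 and 2.9.7 of \cite{BCR} give openness in the Euclidean topology, while Proposition 2.2.7 and Theorem 2.4.4 of \cite{BCR} give finitely many semi-algebraically connected components and hence finitely many orbits. Summing over the finitely many $\mathcal{A}$ and collapsing by the finitely many basis-permuting automorphisms of $R$ yields the finiteness statement. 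The main technical subtlety I anticipate is the degree shift relative to Theorem \ref{thm:nodeformation}: here the relevant DY degree is three rather than two, so one must carefully verify that the pentagon linearizes exactly to the $3$-cocycle condition and that the differential of the $G$-action realizes exactly the $3$-coboundaries, and one must handle the unit isomorphisms and triangle axiom in such a way that they do not contribute spurious moduli beyond those already killed by the vanishing of $H^3_{DY}$.
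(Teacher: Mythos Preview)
Your strategy coincides with the paper's: parameterize monoidal structures on the fixed underlying linear category by an algebraic set, act by a gauge group, use the vanishing of $H^3_{DY}$ at separable points to force the orbit-map differential to be surjective, and then finish via the openness and quasi-compactness (resp.\ semi-algebraic connectedness) arguments already established in Theorem~\ref{thm:nodeformation}. Two points, however, deserve care.

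First, over an algebraically closed field of positive characteristic it is \emph{not} true that every $\mathcal{C}_x$ is separable, so your phrase ``since $\mathcal{C}_x$ is separable by hypothesis'' is incorrect as stated. The paper only asserts surjectivity of $(dl_z)_e$ at those $z$ for which $\mathcal{C}_z$ is separable, and then passes to the open subvariety $Z'$ swept out by the orbits of such points before invoking quasi-compactness to get finitely many orbits. Your argument needs the same restriction. (Over a real closed field this issue disappears, since every multifusion 1-category in characteristic zero is separable.)

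Second, the paper's parameter space $Z$ records not only associators but also evaluation morphisms compatible with \emph{fixed} coevaluations, so that every point of $Z$ is automatically a rigid monoidal category. Your space $X$, as described, encodes only associators and unit constraints satisfying pentagon and triangle; a priori a point of $X$ need not be rigid, hence need not be a multifusion category at all. You should either enlarge $X$ to include the duality data as the paper does, or supply an argument that rigidity is an open (or closed) algebraic condition so that the rigid locus is again a variety on which the same orbit analysis runs.
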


\begin{proof}
The proof is entirely analogous to that of theorem \ref{thm:nodeformation}, so that we only indicate the key steps.

\vspace{1ex}
\noindent \textbf{Case 1: $\mathds{k}$ is algebraically closed}

We may assume that $\mathcal{C}$ is skeletal. We write $C_i$, $i=0,...,n$ for its simple objects, and assume that $C_0 =I$ is the monoidal unit. The tensor structure of $\mathcal{C}$ establishes for us an assignment $(C_i,C_j)\mapsto \oplus_k n^k_{ij}C_k$ for some non-negative integers $n^k_{ij}$, which we think as fixed. As $\mathcal{C}$ is rigid and skeletal, for any $i$, the left adjoint $C_i^*$ is equal to $C_{i^*}$, for some integer $i^*$ in $\{0,...,n\}$. We fix left and right coevalutation morphisms $coev_{i}:I\rightarrow C_i\otimes C_{i^*}$ and $coev'_{i}:I\rightarrow C_{i^*}\otimes C_i$ for every $i$ in $\mathcal{C}$. Let us now define $Z$ to be the (potentially non-irreducible) variety over $\mathds{k}$ whose closed points are precisely the data necessary to turn the above assignment into a tensor 1-category, i.e. the coherent choices of associators, and evaluation morphisms compatible with the associators and the fixed coevaluation morphisms. Similarly, we define a smooth algebraic group $H$ whose closed points are the automorphisms of the identity functor on $\mathcal{C}$ (viewed as a linear 1-category). The group structure is given by composition, and $H$ acts on $Z$.

Given a point $z$ in $Z$, write $\mathcal{C}_z$ for the corresponding multifusion 1-category. We consider the action map $l_z:H\rightarrow Z$ given by $h\mapsto h\cdot z$. The differential of $l_z$ at $e$ is a map $(dl_z)_e:T_eG\rightarrow T_zX$. If $\mathcal{C}_z$ is separable, the Davydov-Yetter cohomology group $H^3_{DY}(\mathcal{C}_z)$ vanishes. This can be seen by following the original argument given in \cite{ENO} carefully. More straightforwardly, it is also also possible use the fact that the proof of theorem 9.1.3 of \cite{EGNO} applies to any separable fusion 1-category over an algebraically closed field. (One may alternatively appeal to proposition \ref{prop:vanishingDY} above.) As a consequence, we find that $(dl_z)_e$ is surjective. As in the proof of theorem \ref{thm:nodeformation}, any point $z$ for which $(dl_z)_e$ is surjective is smooth. Moreover, the orbit $H\cdot z$ of any such point $z$ is open, so that $\mathcal{C}_z$ admits no nontrivial deformations. This applies in particular to $\mathcal{C}$.

Finally, write $Z'$ for the smooth subvariety of $Z$ given by the union of the orbits under $H$ of the closed points $z$ in $Z$ such that $(dl_z)_e$ is surjective. Then, we have $$Z'=\bigcup_{z\in Z'}H\cdot z.$$ But, $Z'$ being an open subset of a variety, it is quasi-compact, which shows that $Z'$ is the union of finitely many orbits. This proves the second part of the statement.

\vspace{1ex}
\noindent \textbf{Case 2: $\mathds{k}$ is a real closed field}

Without loss of generality, we may assume that $\mathcal{C}$ is skeletal. Analogously to the algebraically closed case, we define a (potentially non-irreducible) Zariski open subset $Z$ of an algebraic set in $\mathds{k}^{\times N}$ for some positive integer $N$, whose points parameterise the tensor structure on $\mathcal{C}$. We also define $H$ to be the Zariksi-open subset of $\mathds{k}^{\times M}$ with $M$ a positive integer, whose points are the automorphisms of the identity functor on the linear 1-category $\mathcal{C}$. Observe that $H$ acts on $Z$ in a canonical way.

Given a point $z$ in $Z$, write $\mathcal{C}_z$ for the corresponding multifusion 1-category. We consider the action map $l_z:H\rightarrow Z$ given by $h\mapsto h\cdot z$. The differential of $l_z$ at $e$, the identity of $H$, is a map $(dl_z)_e:T_eG\rightarrow T_zX$. Thanks to proposition \ref{prop:vanishingDY} and the fact that every multifusion 1-category over $\mathds{k}$ is separable, the Davydov-Yetter cohomology group $H^3_{DY}(\mathcal{C}_z)$ vanishes. As a consequence, we find that $(dl_z)_e$ is surjective for every $z$, and the argument used in the proof of theorem \ref{thm:nodeformation} then shows that $Z$ is smooth. Further, by passing to the Euclidean topology, which is possible as $\mathds{k}$ is a real closed field, one shows that the orbit $H\cdot z$ of any point $z$ in $Z$ is an open semi-algebraic subset. This implies that $\mathcal{C}_z$ admits no nontrivial deformations.

Finally, for any $z$ in $Z$, the subset $H\cdot z$ is open and closed, meaning that it is a union of semi-algebraically connected components of $Z$. Further, we have $$Z=\bigcup_{z\in Z}H\cdot z.$$ But, by theorem 2.4.4 of \cite{BCR}, $Z$ is a finite union of semi-algebraically connected components. This implies that $Z$ is the union of finitely many orbits under the action of $H$, thereby establishing the second part of the theorem.
\end{proof}

\begin{Remark}
Over a general field, the statement of theorem \ref{thm:nodeformation} does not hold. Namely, working over $\mathbb{F}_p$, $\mathbf{Vect}_{\mathbb{F}_{p^q}}$ is a finite semisimple tensor 1-category for every positive integer $q$. These finite semisimple tensor 1-categories are manifestly separable and non-equivalent. Furthermore, they all share the same Grothendieck ring, $\mathbb{Z}$.
\end{Remark}

Over an algebraically closed field, corollary \ref{cor:nodefHopf} first appeared in \cite{Ste}, and corollary \ref{cor:nodefbraiding} follows from remark 2.33 of \cite{ENO}. Over a real closed field, both corollaries are new.

\begin{Corollary}\label{cor:nodefHopf}
There are only finitely many equivalence classes of semisimple and cosemisimple Hopf algebras of a fixed dimension $d$.
\end{Corollary}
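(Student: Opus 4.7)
The plan is to reformulate the problem in categorical terms and apply Theorem \ref{thm:nodeformationmulti} together with Theorem \ref{thm:nodeformation}. To a semisimple and cosemisimple Hopf algebra $H$ of dimension $d$ over $\mathds{k}$, we associate the fusion 1-category $\mathrm{Rep}(H)$ equipped with its canonical fiber functor $F_H:\mathrm{Rep}(H)\to \mathbf{Vect}_{\mathds{k}}$. Tannaka--Krein reconstruction provides an inverse to this assignment, so equivalence classes of semisimple cosemisimple Hopf algebras of dimension $d$ are in bijection with equivalence classes of pairs $(\mathcal{C},F)$, where $\mathcal{C}$ is a fusion 1-category and $F:\mathcal{C}\to\mathbf{Vect}_{\mathds{k}}$ is a tensor functor, subject to the constraint that $\sum_i \mathrm{dim}_{\mathds{k}}(F(X_i))^2=d$, the sum being taken over simple objects $X_i$ of $\mathcal{C}$.

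The essential categorical input is that, when $H$ is both semisimple and cosemisimple, the fusion 1-category $\mathrm{Rep}(H)$ is separable. Indeed, under this hypothesis the Drinfel'd double $D(H)$ is a semisimple Hopf algebra, and its representation category is equivalent to the Drinfel'd center $\mathcal{Z}(\mathrm{Rep}(H))$, which is therefore finite semisimple. Given this, I would argue in three steps. First, because $d$ bounds both the number of simple objects and the structure constants of any Grothendieck ring that can occur, only finitely many based rings arise as $Gr(\mathcal{C})$. Second, Theorem \ref{thm:nodeformationmulti} ensures that, for each fixed Grothendieck ring, only finitely many equivalence classes of separable fusion 1-categories realize it. Third, for each such $\mathcal{C}$, Theorem \ref{thm:nodeformation} applied with target $\mathcal{D}=\mathbf{Vect}_{\mathds{k}}$ (which is trivially separable, so the hypothesis of Proposition \ref{prop:vanishingDY} holds) yields only finitely many equivalence classes of fiber functors $\mathcal{C}\to\mathbf{Vect}_{\mathds{k}}$. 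Combining these three finiteness statements completes the proof.

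The main obstacle is the first step: establishing the finiteness of Grothendieck rings with a fixed bound on their total dimension. One argues that the Frobenius--Perron dimensions of the simple objects of a fusion category of Frobenius--Perron dimension $d$ are bounded above by $\sqrt{d}$, and that the structure constants $n_{ij}^k$ are non-negative integers constrained by associativity and rigidity, leaving only finitely many possibilities. A secondary subtlety arises in the real closed case, where one must check that the Hopf algebra reconstructed from a $\mathds{k}$-linear pair $(\mathcal{C},F)$ is defined over $\mathds{k}$ rather than over its algebraic closure; this is automatic since the coalgebra and algebra structure on the reconstructed object are built from $\mathds{k}$-linear data.
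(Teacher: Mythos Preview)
Your argument is correct and follows essentially the same route as the paper: reduce via Tannaka--Krein reconstruction to pairs $(\mathcal{C},F)$ with $\mathcal{C}$ a separable fusion 1-category of Frobenius--Perron dimension $d$ and $F$ a fiber functor, then invoke Theorems \ref{thm:nodeformationmulti} and \ref{thm:nodeformation}. The only differences are expository: you spell out the finiteness of possible Grothendieck rings and the Drinfel'd-double argument for separability, whereas the paper records the former implicitly via the Frobenius--Perron dimension and cites \cite{EO} for the latter.
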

\begin{proof}
Recall that, for any Hopf algebra $H$ of dimension $d$, the Frobenius-Perron dimension of the fusion 1-category $\mathbf{Rep}(H)$ is $d$. Further, from the reconstruction theorem, it follows that there is a bijection between equivalence classes of semisimple Hopf algebras of dimension $d$ and fusion 1-categories equipped with a tensor functor to $\mathbf{Vect}_{\mathds{k}}$. But, for a semisimple Hopf algebra $H$, it is argued in section 9.1 of \cite{EO}, that $\mathbf{Rep}(H)$ is separable if and only $H$ is cosemisimple. The proof is finished by appealing to theorems \ref{thm:nodeformation} and \ref{thm:nodeformationmulti}.
\end{proof}

\begin{Corollary}\label{cor:nodefbraiding}
There are only finitely many equivalence classes of braidings on a given separable fusion 1-category.
\end{Corollary}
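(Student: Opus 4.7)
The plan is to reduce the statement to theorem \ref{thm:nodeformation} by encoding a braiding as a tensor functor into the Drinfel'd center. Recall that, for any fusion 1-category $\mathcal{C}$, specifying a braiding $c$ on $\mathcal{C}$ is equivalent to specifying a tensor functor $F_c:\mathcal{C}\rightarrow \mathcal{Z}(\mathcal{C})$ which is a section of the forgetful tensor functor $U:\mathcal{Z}(\mathcal{C})\rightarrow \mathcal{C}$; explicitly, $F_c$ sends $X$ in $\mathcal{C}$ to the half-braiding $(X,c_{X,-})$. Under this correspondence, equivalent braidings give rise to equivalent sections of $U$, hence to equivalent tensor functors $\mathcal{C}\rightarrow \mathcal{Z}(\mathcal{C})$.

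Since $\mathcal{C}$ is separable, the Drinfel'd center $\mathcal{Z}(\mathcal{C})$ is a finite semisimple tensor 1-category by definition of separability. Further, by the remark immediately following proposition \ref{prop:vanishingDY}, the separability of $\mathcal{C}$ ensures that $\mathcal{C}^{\otimes op}\boxtimes \mathcal{C}$ is separable, so that any tensor functor $F:\mathcal{C}\rightarrow \mathcal{Z}(\mathcal{C})$ exhibits $\mathcal{Z}(\mathcal{C})$ as a separable right $\mathcal{C}^{\otimes op}\boxtimes \mathcal{C}$-module 1-category. The hypothesis of theorem \ref{thm:nodeformation} is therefore satisfied, which yields that there are only finitely many equivalence classes of tensor functors $\mathcal{C}\rightarrow \mathcal{Z}(\mathcal{C})$. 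A fortiori, only finitely many such classes consist of sections of $U$, so there are only finitely many equivalence classes of braidings on $\mathcal{C}$.

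The main obstacle, if one can call it that, is verifying that the notion of equivalence of braidings on $\mathcal{C}$ matches (or is controlled by) the notion of equivalence of the associated tensor functors into $\mathcal{Z}(\mathcal{C})$. This is a routine unpacking of the definitions: a tensor natural isomorphism $F_{c}\cong F_{c'}$ yields, after whiskering with $U$, a tensor automorphism of the identity of $\mathcal{C}$, which can be absorbed to produce the required equivalence between $c$ and $c'$. Since we only need a finite upper bound on equivalence classes of braidings, it suffices to use the unidirectional implication that equivalent braidings determine equivalent tensor functors, and the finiteness established above.
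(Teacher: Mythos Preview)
Your proof is correct and follows essentially the same route as the paper's: encode a braiding as a section $\mathcal{C}\to\mathcal{Z}(\mathcal{C})$ of the forgetful functor, use separability of $\mathcal{C}$ to ensure $\mathcal{Z}(\mathcal{C})$ is finite semisimple and that the hypothesis of proposition~\ref{prop:vanishingDY} is met, then invoke theorem~\ref{thm:nodeformation}. Your appeal to the remark after proposition~\ref{prop:vanishingDY} is in fact slightly more direct than the paper's argument, which additionally records that $\mathcal{Z}(\mathcal{C})$ itself is a separable fusion 1-category.

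One small slip in your final paragraph: the direction you actually need is the \emph{reverse} of what you claim suffices. To bound the number of equivalence classes of braidings by the (finite) number of equivalence classes of tensor functors $\mathcal{C}\to\mathcal{Z}(\mathcal{C})$, you need that the assignment $c\mapsto F_c$ is \emph{injective} on equivalence classes, i.e.\ that $F_c\simeq F_{c'}$ as tensor functors implies $c$ and $c'$ are equivalent braidings. You do argue this correctly in the preceding sentence (whiskering a tensor natural isomorphism $F_c\cong F_{c'}$ with $U$ and absorbing the resulting automorphism of the identity), so the proof is complete; only the concluding ``it suffices'' sentence has the logic backwards and should be dropped or corrected.
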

\begin{proof}
A braiding on a fusion 1-category $\mathcal{C}$ can equivalently be described as an tensor functor $\mathcal{C}\rightarrow \mathcal{Z}(\mathcal{C})$ whose composite with the forgetful tensor functor $\mathcal{Z}(\mathcal{C})\rightarrow \mathcal{C}$ is the identity. Now, if $\mathcal{C}$ is separable, $ \mathcal{Z}(\mathcal{C})$ is a finite semisimple 1-category. Furthermore, $ \mathcal{Z}(\mathcal{C})$ is a separable fusion 1-category. Over an algebraically closed field, this follows from theorem 2.15 of \cite{ENO}, and, over a real closed field, every fusion 1-category is separable. In particular, theorem \ref{thm:nodeformation} applies, which proves the result.
\end{proof}

Finally, let us recall corollary 9.1.8 of \cite{EGNO} as corollary \ref{cor:definedoverQbar} below, and give the appropriate generalizations in corollaries \ref{cor:definedoverFpbar} and \ref{cor:definedoverRpalg}. Let us also mention that corollary \ref{cor:definedoverFpbar} is certainly known to experts, as it is mentioned in subsection 4.5 of \cite{Et}.

\begin{Corollary}\label{cor:definedoverQbar}
Let $\mathds{k}$ be an algebraically closed field of characteristic zero, then any multifusion 1-category, tensor functor between them, finite semisimple module 1-category over them, and semisimple Hopf algebra is defined over $\overline{\mathbb{Q}}$.
\end{Corollary}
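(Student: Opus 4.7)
The plan is to combine the no-deformation results of Theorems \ref{thm:nodeformation} and \ref{thm:nodeformationmulti} with a descent argument over $\overline{\mathbb{Q}}$. Since $\mathrm{char}(\mathds{k})=0$, corollary 2.6.8 of \cite{DSPS13} guarantees that every multifusion 1-category is separable, so the hypotheses of both theorems are satisfied unconditionally. The underlying idea is that a statement of the form ``$F$ has no non-trivial deformations" is precisely what lets one descend $F$ from $\mathds{k}$ to the algebraic subfield $\overline{\mathbb{Q}}$: if the moduli space is locally a finite set of points, all of these points must be algebraic.

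I would begin with a multifusion 1-category $\mathcal{C}$, recycling the variety $Z$ and the smooth algebraic group $H$ from the proof of Theorem \ref{thm:nodeformationmulti}. Once one fixes the skeletal data (the simple objects and the integer fusion coefficients $n^k_{ij}$), the polynomial equations cutting out $Z$ --- pentagon, rigidity, compatibility with the chosen coevaluations --- have integer coefficients, and the $H$-action is similarly defined over $\mathbb{Z}$. Thus $Z$, $H$, and the action map descend canonically to $\overline{\mathbb{Q}}$-schemes of finite type. Given $z_0 \in Z(\mathds{k})$ representing $\mathcal{C}$, Theorem \ref{thm:nodeformationmulti} produces a non-empty $\overline{\mathbb{Q}}$-open subscheme $U\subseteq Z$ (the locus where $(dl_z)_e$ is surjective) containing $z_0$, whose $\mathds{k}$-points decompose into finitely many open $H$-orbits. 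The irreducible component $V$ of $Z_{\overline{\mathbb{Q}}}$ through $z_0$ is itself defined over $\overline{\mathbb{Q}}$, its $\overline{\mathbb{Q}}$-points are Zariski dense in $V$, and $H\cdot z_0$ is a dense open subset of $V_{\mathds{k}}$; hence $V(\overline{\mathbb{Q}})\cap (H\cdot z_0)$ is non-empty, furnishing a point $z_0'\in Z(\overline{\mathbb{Q}})$ whose associated multifusion 1-category is equivalent to $\mathcal{C}$ after base change to $\mathds{k}$.

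The remaining cases follow by applying the same template to the appropriate parameter varieties. For a tensor functor $F:\mathcal{C}\rightarrow\mathcal{D}$ with both $\mathcal{C}$ and $\mathcal{D}$ already descended to $\overline{\mathbb{Q}}$, the scheme $X$ and group $G$ from the proof of Theorem \ref{thm:nodeformation} descend to $\overline{\mathbb{Q}}$ and the same orbit-density argument applies. For a finite semisimple right $\mathcal{C}$-module 1-category $\mathcal{M}$, Ostrik's theorem lets us write $\mathcal{M}\simeq\mathrm{Mod}_{\mathcal{C}}(A)$ for a separable algebra $A$ in $\mathcal{C}$; the moduli of separable-algebra structures on a fixed object of $\mathcal{C}$ is again a $\overline{\mathbb{Q}}$-scheme with open $\mathrm{Aut}$-orbits (the relevant vanishing of deformation cohomology being a direct consequence of Proposition \ref{prop:vanishingDY} applied to the associated tensor functor $\mathcal{C}^{\otimes\mathrm{op}}\to\mathrm{Fun}(\mathcal{M},\mathcal{M})$), so $A$ descends. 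Finally, a semisimple Hopf algebra $H$ is reconstructed from the pair $(\mathbf{Rep}(H),\mathrm{forget})$ of a fusion 1-category and a fiber functor to $\mathbf{Vect}_{\mathds{k}}$, both of which have been descended in the previous steps, so $H$ itself descends to $\overline{\mathbb{Q}}$.

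The main technical obstacle is precisely the descent step: one must verify not only that $Z$ and $H$ are defined over $\overline{\mathbb{Q}}$ (which is a bookkeeping check once one notices that only the structural isomorphisms, i.e.\ the coordinates of $Z$, carry non-integer data), but also that the $H$-orbit of $z_0$ genuinely meets $Z(\overline{\mathbb{Q}})$. This second point is what the openness of the orbit buys us: an open orbit is dense in its ambient $\overline{\mathbb{Q}}$-irreducible component, and $\overline{\mathbb{Q}}$-points are Zariski dense in that component by Hilbert's Nullstellensatz, so the intersection cannot be empty. Everything else is a routine transcription of the deformation-theoretic input provided by Theorems \ref{thm:nodeformation} and \ref{thm:nodeformationmulti}.
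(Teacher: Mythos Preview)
Your proposal is correct and follows essentially the same strategy as the paper: exploit that the parameter varieties $Z$ and $X$ from Theorems~\ref{thm:nodeformationmulti} and~\ref{thm:nodeformation} are defined over $\mathbb{Z}$ (resp.\ $\overline{\mathbb{Q}}$), use density of $\overline{\mathbb{Q}}$-points together with openness of orbits to find a $\overline{\mathbb{Q}}$-point in each orbit, and bootstrap through the hierarchy multifusion $\leadsto$ tensor functor $\leadsto$ module $\leadsto$ Hopf algebra.

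Two minor differences are worth flagging. First, the paper invokes a dedicated density statement (Lemma~\ref{lem:density}: for $\mathds{k}\subseteq\mathds{K}$ algebraically closed, $V_{\mathds{k}}$ is Zariski dense in $V_{\mathds{K}}$) rather than routing through irreducible components; your formulation is fine, but note that ``the irreducible component of $Z_{\overline{\mathbb{Q}}}$ through $z_0$'' is a slight type mismatch since $z_0\in Z(\mathds{k})$---what you mean is that irreducible components of $Z_{\overline{\mathbb{Q}}}$ stay irreducible after base change to $\mathds{k}$, which is true because both fields are algebraically closed. Second, for module 1-categories the paper takes the shorter path: since $\mathrm{Fun}(\mathcal{M},\mathcal{M})$ is canonically defined over $\overline{\mathbb{Q}}$, a right $\mathcal{C}$-module structure on $\mathcal{M}$ descends if and only if the tensor functor $\mathcal{C}^{\otimes\mathrm{op}}\to\mathrm{Fun}(\mathcal{M},\mathcal{M})$ does, reducing immediately to the tensor-functor case already handled. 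Your route via descending the separable algebra $A$ also works (separability forces the relevant Hochschild-type obstructions to vanish), but it requires setting up a new moduli problem and the cohomological input you cite---Proposition~\ref{prop:vanishingDY} for the tensor functor---controls deformations of the module structure, not directly of the algebra structure on a fixed object, so a little extra translation is needed. The paper's reduction avoids this bookkeeping entirely.
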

\begin{proof}
Clearly, $\overline{\mathbb{Q}}$ is a subfield of $\mathds{k}$. Now, observe that the variety $Z$ over $\mathds{k}$ used in the proof of theorem \ref{thm:nodeformationmulti} is defined over $\mathbb{Z}$. Let us write $P(Z)$ for the corresponding algebraic set in $\mathds{k}^{\times m}$, which is also defined over $\mathbb{Z}$. Then, using lemma \ref{lem:density} below, we see that the set of $\overline{\mathbb{Q}}$-points of $P(Z)$ is dense in the Zariski topology. Let $\mathcal{C}$ be a multifusion 1-category corresponding to a point $z\in P(Z)$ (i.e. a closed point of $Z$). As the orbit $H\cdot z$ is open in $P(Z)$, there exists a point $z'$ in $H\cdot z$ whose coordinates are in $\overline{\mathbb{Q}}$. This proves the claim for multifusion 1-categories.

In order to establish the claim for tensor functors, one can use the result we have just proven, and therefore assume that our multifusion 1-categories are defined over $\overline{\mathbb{Q}}$. This implies that the variety $X$ over $\mathds{k}$ used in the proof of theorem \ref{thm:nodeformation} is defined over $\overline{\mathbb{Q}}$. Then, we can apply the same reasoning as above to deduce the desired result.

Let $\mathcal{C}$ be a multifusion 1-category, which we can assume to be defined over $\overline{\mathbb{Q}}$. Note that for any finite semisimple 1-category $\mathcal{M}$, $End(\mathcal{M})$ is canonically defined over $\overline{\mathbb{Q}}$. Now, observe that a finite semisimple right $\mathcal{C}$-module 1-category $\mathcal{M}$ is defined over $\overline{\mathbb{Q}}$ if and only if the corresponding tensor functor $\mathcal{C}^{\otimes op}\rightarrow End(\mathcal{M})$ is defined over $\overline{\mathbb{Q}}$. Thus, the result for finite semisimple module 1-categories follows from that for tensor functors.

Finally, let $H$ be a semisimple Hopf algebra. By what we have prove above, there exists a fusion 1-category $\mathcal{C}$ defined over $\overline{\mathbb{Q}}$, which is equivalent to $\mathbf{Rep}(H)$. Furthermore, there is a tensor functor $F:\mathcal{C}\rightarrow\mathbf{Vect}$ defined over $\overline{\mathbb{Q}}$, which is equivalent to the canonical tensor functor $\mathcal{C}\simeq \mathbf{Rep}(H)\rightarrow\mathbf{Vect}$. Applying the reconstruction theorem to $F$ yields a Hopf algebra $H'$, which is defined over $\overline{\mathbb{Q}}$, and equivalent to $H$.
\end{proof}

The following lemma is certainly well-known. We include a proof for completeness.

\begin{Lemma}\label{lem:density}
Let $\mathds{k}\subseteq \mathds{K}$ be algebraically closed fields. Let $f_1,...,f_m$ be polynomials in $\mathds{k}\lbrack x_1,...,x_n\rbrack$, and write $V_{\mathds{k}}$, $V_{\mathds{K}}$ respectively, for the vanishing sets of the $f_i$'s in $\mathds{k}^{\times n}$, $\mathds{K}^{\times n}$ respectively. Then, $V_{\mathds{k}}$ is dense in $V_{\mathds{K}}$ with respect to the Zariski topology.
\end{Lemma}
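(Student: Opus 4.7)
The plan is to translate density in the Zariski topology into an algebraic statement via the Nullstellensatz, and then use a basis-expansion trick to pass from polynomials over $\mathds{K}$ to polynomials over $\mathds{k}$. Let $I = (f_1, \ldots, f_m) \subseteq \mathds{k}\lbrack x_1, \ldots, x_n\rbrack$ denote the ideal generated by the $f_i$. Since $V_{\mathds{K}}$ is Zariski-closed in $\mathds{K}^{\times n}$, it suffices to show that every polynomial $g \in \mathds{K}\lbrack x_1, \ldots, x_n\rbrack$ that vanishes on $V_{\mathds{k}}$ also vanishes on $V_{\mathds{K}}$; for then the Zariski closure of $V_{\mathds{k}}$ inside $V_{\mathds{K}}$ cannot be cut out by any additional equation.

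The key step is to fix a basis $\{e_\alpha\}$ of $\mathds{K}$ as a $\mathds{k}$-vector space and expand $g = \sum_\alpha g_\alpha e_\alpha$ with $g_\alpha \in \mathds{k}\lbrack x_1, \ldots, x_n\rbrack$ (only finitely many nonzero). For any point $x \in V_{\mathds{k}} \subseteq \mathds{k}^{\times n}$, the values $g_\alpha(x)$ lie in $\mathds{k}$, so the $\mathds{k}$-linear independence of the $e_\alpha$ forces $g_\alpha(x) = 0$ for every $\alpha$. Hence each $g_\alpha$ vanishes on the whole of $V_{\mathds{k}}$.

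Now, since $\mathds{k}$ is algebraically closed, the Nullstellensatz applied to $\mathds{k}\lbrack x_1, \ldots, x_n\rbrack$ yields $g_\alpha \in \sqrt{I}$, so some power $g_\alpha^{N_\alpha}$ belongs to $I$. In particular, $g_\alpha^{N_\alpha}$ vanishes identically on $V_{\mathds{K}}$, and therefore so does $g_\alpha$ (as $\mathds{K}$ is a field, hence reduced). Consequently $g = \sum_\alpha g_\alpha e_\alpha$ vanishes on $V_{\mathds{K}}$, which is what we needed.

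The argument is essentially formal; the only point requiring a bit of care is the basis-expansion step, where one must use that evaluation at a point of $\mathds{k}^{\times n}$ respects the decomposition $g = \sum_\alpha g_\alpha e_\alpha$ and that the $e_\alpha$ remain $\mathds{k}$-linearly independent after evaluation. Neither step is serious, so I do not expect any genuine obstacle in writing up the proof.
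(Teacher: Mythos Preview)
Your proof is correct and takes a genuinely different route from the paper's. The paper argues the contrapositive via a specialization argument: given $g\in\mathds{K}[x_1,\ldots,x_n]$ not vanishing on $V_{\mathds{K}}$, it picks a point $(p_1,\ldots,p_n)\in V_{\mathds{K}}$ with $g(p_1,\ldots,p_n)\neq 0$, forms the finitely generated $\mathds{k}$-subalgebra $A\subseteq\mathds{K}$ generated by the $p_i$, the coefficients of $g$, and $g(p_1,\ldots,p_n)^{-1}$, and reduces modulo a maximal ideal of $A$ (using Zariski's lemma to identify $A/\mathfrak{m}\cong\mathds{k}$) to produce a $\mathds{k}$-point $(q_1,\ldots,q_n)\in V_{\mathds{k}}$ at which $g$ does not vanish. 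Your approach instead decomposes $g$ along a $\mathds{k}$-basis of $\mathds{K}$ and applies the Nullstellensatz over $\mathds{k}$ directly to each component $g_\alpha$. Both arguments ultimately rest on the Nullstellensatz (yours explicitly, the paper's through Zariski's lemma), but the paper's version is more geometric in that it actually manufactures a witnessing $\mathds{k}$-point, whereas yours is purely algebraic and arguably tidier for the purpose at hand. Neither approach delivers substantially more than the other for this lemma.
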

\begin{proof}
We have to show that if a polynomial in $\mathds{K}\lbrack x_1,...,x_n\rbrack$ vanishes on $V_{\mathds{k}}$, then it vanishes on $V_{\mathds{K}}$. We prove the converse, i.e. that any polynomial $g$ in $\mathds{K}\lbrack x_1,...,x_n\rbrack$ which does not vanish $V_{\mathds{K}}$, does not vanish on $V_{\mathds{k}}$. Let $(p_1,...,p_n)$ be a point in $V_{\mathds{K}}$ for which $g(p_1,...,p_n)\neq 0$. Let us consider $A$ the $\mathds{k}$-subalgebra of $\mathds{K}$ generated by the coefficients of $g$, the $p_i$'s, and $g(p_1,...,p_n)^{-1}$. Let $\mathfrak{m}$ be a maximal ideal of $A$. As $A$ is finitely generated and $\mathds{k}$ is algebraically closed, we have $A/\mathfrak{m}\cong \mathds{k}$. Let $q_1,...,q_n$ be the images of $p_1,..,p_n$ along the quotient $A\twoheadrightarrow A/\mathfrak{m}\cong \mathds{k}$. We have $g(q_1,...,q_n) = g(p_1,...,p_n)$ in $A/\mathfrak{m}$, which is a unit by construction.
\end{proof}

\begin{Corollary}\label{cor:definedoverFpbar}
Let $\mathds{k}$ be an algebraically closed field of characteristic $p$, any separable multifusion 1-category, separable module 1-category over a separable multifusion 1-category, tensor functor between separable multifusion 1-categories, semisimple and cosemisimple Hopf algebra is defined over $\overline{\mathbb{F}}_p$.
\end{Corollary}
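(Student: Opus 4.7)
The plan is to follow the argument of corollary \ref{cor:definedoverQbar} essentially verbatim, substituting $\overline{\mathbb{F}}_p$ for $\overline{\mathbb{Q}}$ throughout, and carrying the separability hypothesis through each step so that the key openness of orbits still holds. Observe first that $\overline{\mathbb{F}}_p\subseteq\mathds{k}$, and that the varieties $Z$ (from theorem \ref{thm:nodeformationmulti}) and $X$ (from theorem \ref{thm:nodeformation}) are cut out by polynomial equations (associativity pentagons, triangle identities, compatibility of evaluation/coevaluation, naturality, and coherence for $\phi$) whose coefficients are rational integers, so they are defined over $\mathbb{Z}$ and in particular over $\mathbb{F}_p$. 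Lemma \ref{lem:density} then guarantees that their $\overline{\mathbb{F}}_p$-points are Zariski-dense.

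For a separable multifusion 1-category $\mathcal{C}$, pick a closed point $z\in Z$ representing $\mathcal{C}$. The proof of theorem \ref{thm:nodeformationmulti} shows that the separability of $\mathcal{C}$ makes $(dl_z)_e$ surjective, so the orbit $H\cdot z$ is open in $Z$. By density, $H\cdot z$ contains an $\overline{\mathbb{F}}_p$-point $z'$, and $\mathcal{C}_{z'}$ is a multifusion 1-category equivalent to $\mathcal{C}$ and defined over $\overline{\mathbb{F}}_p$. For a tensor functor $F:\mathcal{C}\to\mathcal{D}$ between separable multifusion 1-categories, we may assume by the previous step that $\mathcal{C}$ and $\mathcal{D}$ are themselves defined over $\overline{\mathbb{F}}_p$, so the variety $X$ of theorem \ref{thm:nodeformation} is defined over $\overline{\mathbb{F}}_p$. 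The remark following proposition \ref{prop:vanishingDY} shows that separability of $\mathcal{C}$ implies the vanishing hypothesis, so the orbit $G\cdot x$ of the point representing $F$ is open and hence contains an $\overline{\mathbb{F}}_p$-point.

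For a separable right $\mathcal{C}$-module 1-category $\mathcal{M}$, we use (as in corollary \ref{cor:definedoverQbar}) that a right $\mathcal{C}$-module structure on $\mathcal{M}$ is equivalent to a tensor functor $\mathcal{C}^{\otimes op}\to \mathrm{End}(\mathcal{M})$. Since $\mathds{k}$ is algebraically closed, $\mathrm{End}(\mathcal{M})\simeq\mathbf{Vect}^{\times n}$ is a separable multifusion 1-category canonically defined over $\mathbb{F}_p$, and $\mathcal{C}^{\otimes op}$ is separable, so the tensor functor case applies and yields an equivalent structure defined over $\overline{\mathbb{F}}_p$. Finally, for a semisimple and cosemisimple Hopf algebra $H$, the proof of corollary \ref{cor:nodefHopf} identifies $H$ with the datum of the separable fusion 1-category $\mathbf{Rep}(H)$ together with its canonical tensor functor to $\mathbf{Vect}_{\mathds{k}}$. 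Both are defined over $\overline{\mathbb{F}}_p$ by the cases already handled, and applying Tannakian reconstruction produces a Hopf algebra over $\overline{\mathbb{F}}_p$ equivalent to $H$.

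The main point requiring care — and the only genuine departure from the characteristic zero story — is that in positive characteristic not every multifusion 1-category is separable, so the hypothesis of separability must be propagated through every reduction: one must verify at each stage that the orbit invoked is open, which is precisely what fails without separability. Once the separability bookkeeping is in place, the density input of lemma \ref{lem:density} together with the openness of orbits proved in theorems \ref{thm:nodeformation} and \ref{thm:nodeformationmulti} closes the argument exactly as in the $\overline{\mathbb{Q}}$ case.
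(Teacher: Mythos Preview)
Your proposal is correct and follows exactly the approach the paper intends: the paper's own proof says only ``with the obvious modifications, one can use the argument used to prove corollary \ref{cor:definedoverQbar},'' and you have spelled out precisely those modifications, correctly identifying that the separability hypothesis is what guarantees the relevant orbits are open in positive characteristic. One small slip: $\mathrm{End}(\mathcal{M})$ for $\mathcal{M}$ with $n$ simples is the $n\times n$ matrix multifusion category over $\mathbf{Vect}$ (with $n^2$ simples), not $\mathbf{Vect}^{\times n}$, but this does not affect your argument since it is still separable and canonically defined over the prime field.
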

\begin{proof}
With the obvious modifications, one can use the argument used to prove corollary \ref{cor:definedoverQbar}.
\end{proof}

\begin{Corollary}\label{cor:definedoverRpalg}
Let $\mathds{k}$ be a real closed field, then any multifusion 1-category, tensor functor between them, finite semisimple module 1-category over them, and semisimple Hopf algebra is defined over $\mathbb{R}^{alg}=\overline{\mathbb{Q}}\cap \mathbb{R}$.
\end{Corollary}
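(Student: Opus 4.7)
The plan is to adapt the blueprint of Corollary \ref{cor:definedoverQbar}, replacing the Zariski density of $\overline{\mathbb{Q}}$-points provided by Lemma \ref{lem:density} with a transfer statement between the real closed fields $\mathbb{R}^{alg}$ and $\mathds{k}$. The starting observation is that the semi-algebraic sets $Z$ (parameterising tensor structures on a fixed skeletal linear 1-category with fixed fusion rules) and $H$ (parameterising natural automorphisms of the identity functor) constructed in Case 2 of the proof of Theorem \ref{thm:nodeformationmulti} are cut out by polynomial (in)equalities with integer coefficients; in particular, both $Z$ and $H$, together with the action of $H$ on $Z$, are defined over $\mathbb{R}^{alg}\subseteq\mathds{k}$.

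Next I would invoke the Tarski--Seidenberg transfer principle for the inclusion of real closed fields $\mathbb{R}^{alg}\hookrightarrow\mathds{k}$: for any semi-algebraic set $Z$ defined over $\mathbb{R}^{alg}$, base change induces a natural bijection between the semi-algebraically connected components of $Z(\mathbb{R}^{alg})$ and those of $Z(\mathds{k})$, and each connected component of $Z(\mathds{k})$ meets $Z(\mathbb{R}^{alg})$ (using that connected components of a semi-algebraic set over a real closed field are themselves semi-algebraic over the field of definition, as treated in Chapters 2 and 5 of \cite{BCR}). By the argument of Theorem \ref{thm:nodeformationmulti}, each $H$-orbit in $Z$ is open-and-closed in the Euclidean topology, and is thus a union of semi-algebraically connected components of $Z(\mathds{k})$; the transfer principle therefore supplies an $\mathbb{R}^{alg}$-point $z'$ in every such orbit. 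The multifusion 1-category corresponding to $z'$ is defined over $\mathbb{R}^{alg}$ and, via the element $h\in H$ realising $z'=h\cdot z$, is equivalent to the original $\mathcal{C}$.

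For the remaining cases the same mechanism applies. For a tensor functor $F:\mathcal{C}\to\mathcal{D}$, I would first descend $\mathcal{C}$ and $\mathcal{D}$ to $\mathbb{R}^{alg}$ by the previous step; the semi-algebraic set $X$ of tensor structures and the group $G$ from the proof of Theorem \ref{thm:nodeformation} are then both defined over $\mathbb{R}^{alg}$, and the transfer argument produces a $G$-translate of $F$ whose structure constants lie in $\mathbb{R}^{alg}$ (the finiteness of equivalence classes of underlying linear functors established in the first step of that proof, together with the fact that the relevant finite-dimensional division algebras $\mathds{k}$, $\mathds{k}[\sqrt{-1}]$, and the quaternions are all defined over $\mathbb{R}^{alg}$, ensures we may choose the linear functor itself to be defined over $\mathbb{R}^{alg}$). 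The module 1-category case reduces to the tensor functor case via the equivalence between right $\mathcal{C}$-module structures on $\mathcal{M}$ and tensor functors $\mathcal{C}^{\otimes op}\to\mathrm{Fun}(\mathcal{M},\mathcal{M})$, the target being canonically defined over $\mathbb{R}^{alg}$ once $\mathcal{M}$ is skeletalised. Finally, the semisimple Hopf algebra case follows by Tannakian reconstruction applied to a fibre functor $\mathbf{Rep}(H)\to\mathbf{Vect}_{\mathds{k}}$ defined over $\mathbb{R}^{alg}$, exactly as in the last paragraph of the proof of Corollary \ref{cor:definedoverQbar}.

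The main obstacle I anticipate is the passage from Zariski density to Euclidean openness: over a real closed field the orbits $H\cdot z$ are only open in the Euclidean topology (not the Zariski topology), so the purely algebraic-geometric density argument of Lemma \ref{lem:density} is insufficient. Instead one must rely on the deeper fact that semi-algebraically connected components of a semi-algebraic set are semi-algebraic over the field of definition and are compatible with base change between real closed fields, which is precisely where the Tarski--Seidenberg principle does the substantive work.
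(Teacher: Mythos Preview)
Your proposal is correct and follows essentially the same route as the paper: the paper's proof is the one-liner ``modify the argument of Corollary~\ref{cor:definedoverQbar} by replacing $\overline{\mathbb{Q}}$ with $\mathbb{R}^{alg}$ and appealing to Proposition~5.3.5 of \cite{BCR} instead of Lemma~\ref{lem:density}'', and Proposition~5.3.5 of \cite{BCR} is precisely the transfer statement (bijection of semi-algebraically connected components under real closed base change) that you invoke. Your write-up correctly identifies why Zariski density is inadequate here and how the Euclidean open-and-closedness of orbits established in Theorem~\ref{thm:nodeformationmulti} combines with the transfer principle to produce an $\mathbb{R}^{alg}$-point in each orbit.
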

\begin{proof}
Modifying the argument used to prove corollary \ref{cor:definedoverQbar} by replacing $\overline{\mathbb{Q}}$ by $\mathbb{R}^{alg}$ and by appealing to proposition 5.3.5 of \cite{BCR} instead of lemma \ref{lem:density} yields the desired result.
\end{proof}

\section{Additional Properties of Compact Semisimple 2-Categories}\label{sec:additional}

We explain how a number of important results in the theory of finite semisimple 2-categories generalize to compact semisimple 2-categories over an arbitrary field $\mathds{k}$.

\subsection{An Equivalence of 3-categories}

In the spirit of \cite{D1}, we establish an equivalence between two 3-categories. In order to introduce the first one, recall from \cite{DSPS13} following \cite{JS} that, over any base field, there is a 3-category $\mathbf{TC}$ of finite tensor 1-categories, finite bimodule 1-categories, bimodule functors, and bimodule natural transformations.

\begin{Theorem}
Finite semisimple tensor 1-categories, right separable bimodule 1-categories, bimodule functors, and bimodule natural transformations define a sub-3-category $\mathbf{TC}^{rsep}$ of $\mathbf{TC}$.
\end{Theorem}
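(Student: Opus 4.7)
The verification is essentially that the class of right separable bimodule 1-categories contains the identity 1-morphisms of $\mathbf{TC}$ at each object and is closed under the relative Deligne tensor product. The hom-2-categories of bimodule functors and natural transformations, as well as all associator and unitor coherence data, are then inherited from $\mathbf{TC}$ without further restriction, so no additional checks are required for these.

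For the identity, given a finite semisimple tensor 1-category $\mathcal{C}$, the regular $\mathcal{C}$-$\mathcal{C}$-bimodule is equivalent as a right $\mathcal{C}$-module 1-category to $\mathrm{Mod}_{\mathcal{C}}(I)$, where $I$ denotes the monoidal unit. The unit $I$ is tautologically a separable algebra, as its multiplication $I\otimes I\to I$ is an isomorphism admitting its own inverse as a bimodule splitting. This is enough to conclude that the identity 1-morphism is right separable.

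For composition, the substantive step, let $\mathcal{M}$ be a right separable $\mathcal{C}$-$\mathcal{D}$-bimodule and $\mathcal{N}$ a right separable $\mathcal{D}$-$\mathcal{E}$-bimodule; the plan is to show that $\mathcal{M}\boxtimes_{\mathcal{D}}\mathcal{N}$ is right separable as a $\mathcal{C}$-$\mathcal{E}$-bimodule. I would write $\mathcal{M}\simeq\mathrm{Mod}_{\mathcal{D}}(A)$ and $\mathcal{N}\simeq\mathrm{Mod}_{\mathcal{E}}(B)$ for separable algebras $A$ in $\mathcal{D}$ and $B$ in $\mathcal{E}$, use the left $\mathcal{D}$-module structure on $\mathcal{N}$ to obtain a tensor functor $\mathcal{D}\to \mathrm{Bimod}_{\mathcal{E}}(B)$, and identify $\mathcal{M}\boxtimes_{\mathcal{D}}\mathcal{N}$ with the category of modules in $\mathcal{E}$ over an algebra built from $B$ and the image of $A$ under this tensor functor. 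Invoking the stability results of \cite{DSPS13}, namely that tensor functors send separable algebras to separable algebras and that the formation of enveloping algebras preserves separability, would then yield the claim.

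The main obstacle will be the bookkeeping needed to make the identification of $\mathcal{M}\boxtimes_{\mathcal{D}}\mathcal{N}$ with modules over a single algebra in $\mathcal{E}$ precise, and to confirm that the separability of $A$ is correctly transported through the left $\mathcal{D}$-action on $\mathcal{N}$. Once this is done, associativity and the remaining coherence data follow automatically from those of $\mathbf{TC}$, completing the verification that $\mathbf{TC}^{rsep}$ is a sub-3-category.
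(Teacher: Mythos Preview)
Your proposal is correct and takes essentially the same approach as the paper: both reduce the verification to showing that right separable bimodules are closed under the relative Deligne tensor product, and the paper simply cites theorem~2.5.5 of \cite{DSPS13} for this fact, whose proof proceeds along exactly the lines you sketch (realize each bimodule as modules over a separable algebra, transport $A$ through the tensor functor $\mathcal{D}\to\mathrm{Fun}_{\mathcal{E}}(\mathcal{N},\mathcal{N})$, and use that a separable algebra over a separable algebra is separable). Your explicit treatment of the identity 1-morphism is a small addition the paper leaves implicit.
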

\begin{proof}
It is enough to prove that right separable bimodule 1-categories compose. This is precisely what the proof of theorem 2.5.5 of \cite{DSPS13} shows.
\end{proof}

\begin{Remark}
Let $\mathds{k}$ be a perfect field, the full sub-3-category of $\mathbf{TC}^{rsep}$ on the separable finite semisimple tensor 1-categories is denoted by $\mathbf{TC}^{sep}$ in \cite{DSPS13}. Namely, proposition 2.5.10 of \cite{DSPS13} shows that $\mathbf{TC}^{sep}$ can also be described as the 3-category of separable finite semisimple tensor 1-categories, right separable bimodule 1-categories, bimodule functors, and bimodule natural transformations.
\end{Remark}

\begin{Definition}
We write $\mathbf{CSS2C}$ for the 3-category of compact semisimple 2-categories, linear 2-functors, 2-natural transformations, and modifications.
\end{Definition}

\begin{Theorem}\label{thm:equivalence}
Over an arbitrary field $\mathds{k}$, there is an equivalence of linear 3-categories $$\mathbf{Mod}:\mathbf{TC}^{rsep}\xrightarrow{\simeq} \mathbf{CSS2C}.$$
\end{Theorem}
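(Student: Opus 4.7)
The plan is to build the 3-functor $\mathbf{Mod}(-)$ explicitly, then verify essential surjectivity on objects together with a local equivalence on every hom-2-category. On objects, send $\mathcal{C}$ to the compact semisimple 2-category $\mathbf{Mod}(\mathcal{C})$, which is well-defined by Theorem \ref{thm:modulenearfinitesemisimple}. On 1-morphisms, send a right separable $\mathcal{C}$-$\mathcal{D}$-bimodule $\mathcal{M}$ to the linear 2-functor $\mathcal{N}\mapsto \mathcal{N}\boxtimes_{\mathcal{C}}\mathcal{M}$; this lands in $\mathbf{Mod}(\mathcal{D})$ because right separable bimodules compose in $\mathbf{TC}^{rsep}$, which is precisely the content used to define that 3-category. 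On 2-morphisms and 3-morphisms the assignment is the evident one, sending a bimodule functor to whiskering by it, and a bimodule natural transformation to the induced modification.

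Essential surjectivity on objects is exactly Theorem \ref{thm:semisimplemodule}. The substantive step is to prove that for each pair $\mathcal{C},\mathcal{D}$ of finite semisimple tensor 1-categories the induced 2-functor
$$\mathbf{Mod}(-):\mathbf{TC}^{rsep}(\mathcal{C},\mathcal{D})\longrightarrow \mathbf{CSS2C}(\mathbf{Mod}(\mathcal{C}),\mathbf{Mod}(\mathcal{D}))$$
is an equivalence of 2-categories. My strategy is to exploit the presentation $\mathbf{Mod}(\mathcal{C})\simeq Cau(\mathrm{B}\mathcal{C})$ from the proof of Theorem \ref{thm:semisimplemodule}. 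Since $\mathbf{Mod}(\mathcal{D})$ is Cauchy complete, the universal property of the Cauchy completion (in the form used in \cite{D1}) gives an equivalence of 2-categories
$$Fun(\mathbf{Mod}(\mathcal{C}),\mathbf{Mod}(\mathcal{D}))\xrightarrow{\simeq} Fun(\mathrm{B}\mathcal{C},\mathbf{Mod}(\mathcal{D})),$$
obtained by restriction along the inclusion $\mathrm{B}\mathcal{C}\hookrightarrow \mathbf{Mod}(\mathcal{C})$. Unpacking the right-hand side, a linear 2-functor out of $\mathrm{B}\mathcal{C}$ is precisely the data of an object $\mathcal{M}\in \mathbf{Mod}(\mathcal{D})$ together with a monoidal functor $\mathcal{C}\rightarrow End_{\mathbf{Mod}(\mathcal{D})}(\mathcal{M})$, i.e. a compatible left $\mathcal{C}$-action making $\mathcal{M}$ a $\mathcal{C}$-$\mathcal{D}$-bimodule whose right separability is inherited from $\mathcal{M}\in \mathbf{Mod}(\mathcal{D})$. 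A 2-natural transformation between two such 2-functors similarly unpacks to a $\mathcal{C}$-$\mathcal{D}$-bimodule functor, and a modification to a bimodule natural transformation. Altogether this identifies the 2-category on the right with $\mathbf{TC}^{rsep}(\mathcal{C},\mathcal{D})$, giving the desired local equivalence.

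The main obstacle will be organizing this data so that the local equivalences assemble into an honest 3-functor, rather than just fibrewise equivalences. The coherence cells amount to checking that the assignment $\mathcal{M}\mapsto -\boxtimes_{\mathcal{C}}\mathcal{M}$ is compatible with composition up to canonical equivalence, which follows from associativity of the relative Deligne tensor product together with the usual identification $\mathcal{N}\boxtimes_{\mathcal{C}}\mathcal{C}\simeq \mathcal{N}$ for the units; pinning down the required higher coherences is tedious but follows the template established in \cite{D1} for the algebraically closed characteristic zero case, since nothing in that template relied on the ground field beyond what is already available here. Putting the local equivalences and essential surjectivity together via the standard recognition criterion yields the equivalence of linear 3-categories.
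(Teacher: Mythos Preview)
Your proposal is correct and follows essentially the same approach as the paper: essential surjectivity via Theorem~\ref{thm:semisimplemodule}, and the local equivalence on hom-2-categories via the universal property of the Cauchy completion $\mathbf{Mod}(\mathcal{C})\simeq Cau(\mathrm{B}\mathcal{C})$ (the paper phrases this as constructing an explicit pseudo-inverse $F\mapsto F(\mathcal{C})$ and then invoking that same universal property). The one packaging difference worth noting is that the paper realises $\mathbf{Mod}(-)$ as the corepresented 3-functor $Hom_{\mathbf{TC}^{rsep}}(\mathbf{Vect}_{\mathds{k}},-)$, which delivers all the higher coherences for free and sidesteps the tedious verification you flag at the end.
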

\begin{proof}
The proof follows that of theorem 2.2.2 of \cite{D1}. As $\mathbf{Vect}_{\mathds{k}}$ is a finite semisimple tensor 1-category, we have a linear 3-functor $$Hom_{\mathbf{TC}^{rsep}}(\mathbf{Vect}_{\mathds{k}},-):\mathbf{TC}^{rsep}\rightarrow \mathbf{Cat}^2_{\mathds{k}},$$ to the 3-category of $\mathds{k}$-linear 2-categories. Given a finite semisimple tensor 1-category $\mathcal{C}$, $Hom_{\mathbf{TC}^{rsep}}(\mathbf{Vect}_{\mathds{k}},\mathcal{C})$ is by definition the 2-category of separable right $\mathcal{C}$-module 1-categories, whence the above 3-functor factors through the full sub-2-category $\mathbf{CSS2C}$. We denote this linear 3-functor by $$\mathbf{Mod}:\mathbf{TC}^{rsep}\rightarrow \mathbf{CSS2C}.$$

We will prove that $\mathbf{Mod}$ is an equivalence by showing that it is essentially surjective and induces equivalences on $Hom$-2-categories. Essential surjectivity follows from theorem \ref{thm:semisimplemodule}. Now, let $\mathcal{C}$, $\mathcal{D}$ be two finite semisimple tensor 1-categories. It remains to prove that the linear 2-functor $$T:Hom_{\mathbf{TC}^{rsep}}(\mathcal{C},\mathcal{D})\rightarrow Fun(\mathbf{Mod}(\mathcal{C}),\mathbf{Mod}(\mathcal{D}))$$ induced by $\mathbf{Mod}(-)$ is an equivalence. By construction, this 2-functor sends the $\mathcal{C}$-$\mathcal{D}$-bimodule 1-category $\mathcal{M}$, which is separable as a right module 1-category, to the 1-functor $$\begin{tabular}{c c c}
    $\mathbf{Mod}(\mathcal{C})$ & $\rightarrow$ & $\mathbf{Mod}(\mathcal{D})$ \\
     $\mathcal{N}$ & $\mapsto$ & $\mathcal{N}\boxtimes_{\mathcal{C}} \mathcal{M}.$
\end{tabular}$$ A pseudo-inverse to $T$ is given by the linear 2-functor $$R:Fun(\mathbf{Mod}(\mathcal{C}),\mathbf{Mod}(\mathcal{D}))\rightarrow Hom_{\mathbf{TC}^{rsep}}(\mathcal{C},\mathcal{D})$$ defined by sending the 2-functor $F:\mathbf{Mod}(\mathcal{C})\rightarrow\mathbf{Mod}(\mathcal{D})$ to $F(\mathcal{C})$. Note that $F(\mathcal{C})$ inherits a left action by $\mathcal{C}$ from the canonical left action of $\mathcal{C}$ on itself. The functoriality of $F$ ensures that it is compatible with the given right action by $\mathcal{D}$, so that $F(\mathcal{C})$ is a $\mathcal{C}$-$\mathcal{D}$-bimodule 1-category, which is separable as a right $\mathcal{D}$-module 1-category. Then, it is not hard to check directly that $R \circ T$ is equivalent to the identity 2-functor on $\mathbf{TC}^{rsep}$. Finally, similarly to what is done in the proof of theorem 2.2.2 of \cite{D1}, one can use the 3-universal property of the Cauchy completion (see definition 1.2.1 of \cite{D1}) to show that $T\circ R$ is equivalent to the identity 2-functor on $\mathbf{CSS2C}$.
\end{proof}

\begin{Corollary}
Over a perfect field $\mathds{k}$, the equivalence of theorem \ref{thm:equivalence} restricts to an equivalence between the full sub-3-categories on the separable tensor 1-categories and on the locally separable compact semisimple 2-categories.
\end{Corollary}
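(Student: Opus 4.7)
The plan is to deduce the corollary directly from Theorem \ref{thm:equivalence} by verifying that the 3-functor $\mathbf{Mod}(-)$ restricts appropriately. Since both sub-3-categories in question are \emph{full}, their Hom-2-categories coincide with those in $\mathbf{TC}^{rsep}$ and $\mathbf{CSS2C}$ respectively; consequently, the fully faithful behavior on Hom-2-categories will be inherited automatically from the equivalence established in Theorem \ref{thm:equivalence}. Thus all I really need to check is that the restriction of $\mathbf{Mod}(-)$ to separable tensor 1-categories lands inside locally separable compact semisimple 2-categories, and that every object of the latter is hit.

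First I would verify that the restriction is well-defined on objects. This is exactly the content of Theorem \ref{thm:sepmultsep2cat}: whenever $\mathcal{C}$ is a separable finite semisimple tensor 1-category, $\mathbf{Mod}(\mathcal{C})$ is a locally separable compact semisimple 2-category. Hence $\mathbf{Mod}(-)$ does restrict to a linear 3-functor between the two full sub-3-categories.

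For essential surjectivity on the restricted classes of objects, I would invoke Theorem \ref{thm:sep2catsepmult}, which asserts that every locally separable compact semisimple 2-category $\mathfrak{C}$ is equivalent, as an object of $\mathbf{CSS2C}$, to $\mathbf{Mod}(\mathcal{C})$ for some separable finite semisimple tensor 1-category $\mathcal{C}$. Combining this essential surjectivity with the closure on objects and the inherited equivalences on Hom-2-categories yields the desired equivalence of sub-3-categories. In truth there is no substantive obstacle here: Theorems \ref{thm:sepmultsep2cat} and \ref{thm:sep2catsepmult} already package both directions of the restriction, and the corollary is essentially a bookkeeping assembly of the pieces already in place.
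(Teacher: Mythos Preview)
Your proposal is correct and follows exactly the paper's approach: the paper's proof is a one-line citation of Theorems \ref{thm:sepmultsep2cat} and \ref{thm:sep2catsepmult}, and you have simply unpacked why those two results suffice given that the sub-3-categories are full.
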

\begin{proof}
This is a consequence of theorems \ref{thm:sepmultsep2cat} and \ref{thm:sep2catsepmult}
\end{proof}

\begin{Corollary}
If $\mathds{k}$ is an algebraically closed field or a real closed field, the equivalence of theorem \ref{thm:equivalence} becomes an equivalence between the 3-category of multifusion 1-categories and that of finite semisimple 2-categories.
\end{Corollary}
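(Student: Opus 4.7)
The plan is to derive this corollary directly from Theorem \ref{thm:equivalence} by observing that, over an algebraically closed or real closed field $\mathds{k}$, the two 3-categories appearing in that theorem already coincide, term by term, with the 3-categories asserted in the statement; the higher morphism data is preserved because it is defined identically on both sides.

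First, on the source side, I would recall from Subsection \ref{sub:sepmod} that a multifusion 1-category over an algebraically closed or real closed field is by definition a finite semisimple tensor 1-category. Hence the class of objects of $\mathbf{TC}^{rsep}$ is exactly the class of multifusion 1-categories. On the target side, I would invoke Corollary \ref{cor:separablefinite}, which asserts that over such a field a semisimple 2-category is finite if and only if it is compact. Consequently the objects of $\mathbf{CSS2C}$ are precisely the finite semisimple 2-categories; local finiteness is already built into both notions and is automatic once compactness is in place.

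Given these two identifications on objects, nothing further needs to be checked. The 1-morphisms of $\mathbf{TC}^{rsep}$ are right separable bimodule 1-categories, the 2-morphisms are bimodule functors, and the 3-morphisms are bimodule natural transformations; the higher morphism data in $\mathbf{CSS2C}$ consists of linear 2-functors, 2-natural transformations, and modifications. These are the same data that one naturally takes for the 3-category of multifusion 1-categories (respectively, of finite semisimple 2-categories), so Theorem \ref{thm:equivalence} reads directly as the asserted equivalence.

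In short, there is no genuine obstacle: the corollary amounts to an unwinding of definitions, with the substantive content packaged into Corollary \ref{cor:separablefinite}, which in turn rests on Theorem \ref{thm:modulefinitesemisimple} and the Artin--Schreier-type classification of Theorem \ref{thm:ArtinSchreier}.
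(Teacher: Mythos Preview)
Your proposal is correct and follows essentially the same approach as the paper. The paper's proof is the single sentence ``This follows from theorem \ref{thm:modulefinitesemisimple},'' whereas you invoke its immediate consequence, Corollary \ref{cor:separablefinite}, together with the definitional identification of multifusion 1-categories with finite semisimple tensor 1-categories over such fields; the content is the same.
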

\begin{proof}
This follows from theorem \ref{thm:modulefinitesemisimple}.
\end{proof}

\subsection{The Yoneda Embedding}

In \cite{DR}, the authors prove that the absolute Yoneda embedding of a finite semisimple 2-category over an algebraic closed field of characteristic zero is an equivalence (proposition 1.4.11). Assuming that $\mathds{k}$ is a perfect field, their proof generalizes to locally separable compact semisimple 2-categories. We present here a slightly different argument using theorem \ref{thm:equivalence}.

\begin{Proposition}\label{prop:Yoneda}
Le $\mathds{k}$ be a perfect field, and $\mathfrak{C}$ be a locally separable compact semisimple 2-category. The Yoneda embedding $$\yo:\mathfrak{C}\rightarrow Fun(\mathfrak{C}^{1op}, \mathbf{2Vect}_{\mathds{k}})$$ is an equivalence.
\end{Proposition}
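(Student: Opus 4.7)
The plan is to verify that $\yo$ is fully faithful (automatic from the 2-categorical Yoneda lemma, which only requires $\mathfrak{C}$ to be locally small and linear) and essentially surjective. For the latter, the strategy is to identify the 2-category of presheaves with $\mathfrak{C}$ itself via the universal property of Cauchy completion together with Theorem \ref{thm:equivalence}, and then check that the resulting equivalence intertwines with Yoneda.

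First, by Theorem \ref{thm:sep2catsepmult}, there is a separable finite semisimple tensor 1-category $\mathcal{C}$ with $\mathfrak{C}\simeq \mathbf{Mod}(\mathcal{C})$; the proof of Theorem \ref{thm:semisimplemodule} realizes this equivalence via a generator $G\in\mathfrak{C}$ with $End_{\mathfrak{C}}(G)\simeq \mathcal{C}$, exhibiting $\mathfrak{C}\simeq Cau(\mathrm{B}\mathcal{C})$. Passing to 1-opposites (which preserves Cauchy completeness for semisimple 2-categories with adjoints) yields $\mathfrak{C}^{1op}\simeq Cau(\mathrm{B}\mathcal{C}^{\otimes op})$. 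Since $\mathbf{2Vect}_{\mathds{k}}$ is Cauchy complete, the universal property of the Cauchy completion (definition 1.2.1 of \cite{D1}) produces an equivalence
$$Fun(\mathfrak{C}^{1op},\mathbf{2Vect}_{\mathds{k}})\xrightarrow{\sim}Fun(\mathrm{B}\mathcal{C}^{\otimes op},\mathbf{2Vect}_{\mathds{k}})$$
given by restriction. A linear 2-functor $\mathrm{B}\mathcal{C}^{\otimes op}\to \mathbf{2Vect}_{\mathds{k}}$ is exactly a finite semisimple right $\mathcal{C}$-module 1-category, which is automatically separable by Remark \ref{rem:separableperfectfinitesemisimple} since $\mathds{k}$ is perfect and $\mathcal{C}$ is separable. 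Hence $Fun(\mathfrak{C}^{1op},\mathbf{2Vect}_{\mathds{k}})\simeq \mathbf{Mod}(\mathcal{C})\simeq \mathfrak{C}$.

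Finally, I would check that the composite
$$\mathfrak{C}\xrightarrow{\yo}Fun(\mathfrak{C}^{1op},\mathbf{2Vect}_{\mathds{k}})\xrightarrow{\sim}\mathbf{Mod}(\mathcal{C})\simeq \mathfrak{C}$$
agrees with the structure equivalence of Theorem \ref{thm:semisimplemodule}: concretely, $X$ is sent to $\yo(X)(G)=Hom_{\mathfrak{C}}(G,X)$ equipped with its right $\mathcal{C}$-module structure induced by pre-composition, and this is exactly the image of $X$ under $\mathfrak{C}\simeq \mathbf{Mod}(\mathcal{C})$. Since this composite and the second arrow are equivalences, so is $\yo$. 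The main obstacle lies in the invocation of the universal property of Cauchy completion for presheaf 2-categories: one must carefully verify that $\mathbf{2Vect}_{\mathds{k}}$ meets the required Cauchy-completeness hypotheses and correctly handle the interplay between 1-opposites and Cauchy completion; once this formal step is in place, the rest of the argument is bookkeeping.
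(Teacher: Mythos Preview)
Your proposal is correct and follows essentially the same route as the paper's proof: both reduce to $\mathfrak{C}\simeq\mathbf{Mod}(\mathcal{C})$ for a separable $\mathcal{C}$, identify $Fun(\mathfrak{C}^{1op},\mathbf{2Vect}_{\mathds{k}})$ with (left $\mathcal{C}^{\otimes op}$-, equivalently right $\mathcal{C}$-) module 1-categories via restriction/evaluation at the generator, and then check that under this identification the Yoneda embedding becomes $\mathcal{M}\mapsto Hom_{\mathcal{C}}(\mathcal{C},\mathcal{M})\simeq\mathcal{M}$. The only cosmetic difference is that you invoke the universal property of the Cauchy completion directly, whereas the paper packages the same step through Theorem~\ref{thm:equivalence}; the paper also records the chain $\mathbf{Mod}(\mathcal{C}^{\otimes op})\simeq Cau(\mathrm{B}\mathcal{C})^{1op}\simeq\mathbf{Mod}(\mathcal{C})^{1op}$ explicitly, which is exactly your step ``passing to 1-opposites''.
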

\begin{proof}
Thanks to theorem \ref{thm:semisimplemodule}, we may assume that $\mathfrak{C}=\mathbf{Mod}(\mathcal{C})$ for some separable tensor 1-category $\mathcal{C}$. Moreover, we have $$\mathbf{Mod}(\mathcal{C}^{\otimes op})\simeq Cau(\mathrm{B}(\mathcal{C}^{\otimes op}))\simeq Cau(\mathrm{B}(\mathcal{C})^{1 op})\simeq Cau(\mathrm{B}(\mathcal{C}))^{1 op}\simeq \mathbf{Mod}(\mathcal{C})^{1op}.$$ Under this equivalence, the finite semisimple right $\mathcal{C}$-module 1-category $\mathrm{Mod}_{\mathcal{C}^{\otimes op}}(A)$ is sent to $\mathrm{Mod}_{\mathcal{C}}(A)$.

Now, by theorem \ref{thm:equivalence} and the above equivalence, $Fun(\mathfrak{C}^{1op}, \mathbf{2Vect}_{\mathds{k}})$ is equivalent to the 2-category $\mathbf{LMod}(\mathcal{C}^{\otimes op})$ of finite semisimple left $\mathcal{C}^{\otimes op}$-module 1-categories via evaluation at $\mathcal{C}$. More precisely, given a 2-functor $F:\mathfrak{C}^{1op}\rightarrow \mathbf{2Vect}_{\mathds{k}}$, we view $F(\mathcal{C})$ as a left $\mathcal{C}^{\otimes op}$-module 1-category by precomposition using the left action of $\mathcal{C}$ on itself. Under these identifications, the Yoneda embedding is given by $$\mathcal{M}\mapsto Hom_{\mathcal{C}}(\mathcal{C},\mathcal{M}).$$ Observe that the right hand-side is equivalent to $\mathcal{M}$ viewed as a left $\mathcal{C}^{\otimes op}$-module 1-category. This 2-functor is therefore manifestly an equivalence, so the proof of the result is complete.
\end{proof}

\begin{Remark}
We have to assume that $\mathds{k}$ is perfect in the statement of proposition \ref{prop:Yoneda}. Namely, if $\mathds{k}$ is arbitrary, the 2-functor $\yo$ may not make sense as $Hom_{\mathfrak{C}}(C,D)$ is a priori only a finite semisimple $\mathds{k}$-linear 1-category, and not a separable $\mathds{k}$-linear 1-category. Explicitly, let us set $\mathds{k}=\mathbb{F}_p(x)$, and define $\mathds{k'}=\mathbb{F}_p(\sqrt[p]{x})$. If we take $\mathfrak{C}=\mathbf{2Vect}_{\mathds{k}}$, and $C=\mathbf{Vect}_{\mathds{k}}$, $D=\mathbf{Vect}_{\mathds{k'}}$, then $Hom_{\mathfrak{C}}(C,D) = \mathbf{Vect}_{\mathds{k'}}$, which is not a separable $\mathds{k}$-linear 1-category.

Similarly, assuming that $\mathds{k}$ is perfect alone is not enough. Namely, theorem \ref{thm:equivalence} identifies $Fun(\mathbf{Mod}(\mathcal{C})^{1op}, \mathbf{2Vect}_{\mathds{k}})$ with the 2-category of left $\mathcal{C}^{\otimes op}$-module 1-categories whose underlying 1-category is separable. Note that, in this case, the 2-functor $\yo$ does indeed exist. Taking $\mathds{k}=\mathbb{F}_p$, and $\mathcal{C}$ to be the finite semisimple tensor 1-category of $\mathbb{Z}/p$-graded finite dimensional $\mathds{k}$-vector spaces, we see that $\mathbf{Vect}_{\mathds{k}}$ is a left $\mathcal{C}^{\otimes op}$-module 1-category whose underlying 1-category is separable. However, it cannot be in the image of $\yo$ as it is not separable as a left $\mathcal{C}^{\otimes op}$-module 1-category.
\end{Remark}

\begin{Corollary}
Let $\mathds{k}$ be a perfect field, $\mathfrak{C}$, $\mathfrak{D}$ be two compact semisimple 2-categories, and $P:\mathfrak{D}^{1op}\times\mathfrak{C}\rightarrow \mathbf{2Vect}_{\mathds{k}}$ be a bilinear 2-functor. If $\mathfrak{D}$ is locally separable, there exists a linear 2-functor $F:\mathfrak{C}\rightarrow\mathfrak{D}$ such that $P$ and $Hom_{\mathfrak{D}}(-,F(-))$ are 2-naturally equivalent.
\end{Corollary}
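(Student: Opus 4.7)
The plan is to curry $P$ into a linear 2-functor valued in the presheaf 2-category and then apply the inverse of the Yoneda equivalence of Proposition~\ref{prop:Yoneda}.

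First, fixing the second variable of $P$, for each object $C$ of $\mathfrak{C}$ we obtain a linear 2-functor $\widetilde{P}(C) := P(-,C):\mathfrak{D}^{1op}\to\mathbf{2Vect}_{\mathds{k}}$. Since $P$ is bilinear and 2-functorial, this assignment promotes to a linear 2-functor
$$\widetilde{P}:\mathfrak{C}\to Fun(\mathfrak{D}^{1op},\mathbf{2Vect}_{\mathds{k}}),$$
sending a 1-morphism $f:C\to C'$ to the 2-natural transformation with $D$-component $P(Id_D,f)$, and similarly on 2-morphisms. All coherences are inherited directly from those of $P$.

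Second, because $\mathfrak{D}$ is a locally separable compact semisimple 2-category, Proposition~\ref{prop:Yoneda} provides an equivalence $\yo:\mathfrak{D}\xrightarrow{\simeq} Fun(\mathfrak{D}^{1op},\mathbf{2Vect}_{\mathds{k}})$. Choose a pseudo-inverse $\yo^{-1}$ of $\yo$, and define
$$F:=\yo^{-1}\circ\widetilde{P}:\mathfrak{C}\to\mathfrak{D},$$
a linear 2-functor. The counit equivalence $\yo\circ\yo^{-1}\simeq Id$ evaluated at $\widetilde{P}(C)$ furnishes, for each $C$, an equivalence
$$Hom_{\mathfrak{D}}(-,F(C))=\yo(F(C))\simeq\widetilde{P}(C)=P(-,C)$$
in $Fun(\mathfrak{D}^{1op},\mathbf{2Vect}_{\mathds{k}})$. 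The 2-naturality of this counit in its argument assembles these equivalences, as $C$ varies, into a 2-natural equivalence of bilinear 2-functors $P\simeq Hom_{\mathfrak{D}}(-,F(-))$.

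The main obstacle is the higher coherence bookkeeping of the resulting equivalence in the $\mathfrak{C}$-variable, which is where the pseudo-inverse data of $\yo^{-1}$ enters nontrivially. This data is however entirely supplied by the equivalence of Proposition~\ref{prop:Yoneda}, and composes cleanly with the linearity of $\widetilde{P}$, so no additional input is required. Note that no separability hypothesis on $\mathfrak{C}$ is needed: the 2-category $\mathfrak{C}$ enters only as the source of $\widetilde{P}$, and local separability of $\mathfrak{D}$ is the single ingredient used to invert the Yoneda embedding.
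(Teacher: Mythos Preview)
Your proposal is correct and follows essentially the same approach as the paper: curry $P$ to obtain a 2-functor $\mathfrak{C}\to Fun(\mathfrak{D}^{1op},\mathbf{2Vect}_{\mathds{k}})$, compose with a pseudo-inverse of the Yoneda embedding supplied by Proposition~\ref{prop:Yoneda}, and read off the desired equivalence from the counit. The paper phrases the final step as a chain of equivalences passing through the Yoneda lemma $Hom(\yo(D),Q(C))\simeq (Q(C))(D)$ rather than invoking the counit directly, but this is the same argument.
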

\begin{proof}
By hypothesis, $P$ defines a linear 2-functor $Q:\mathfrak{C}\rightarrow Fun(\mathfrak{D}^{1op},\mathbf{2Vect}_{\mathds{k}})$. Composing $Q$ with a pseudo-inverse of $\yo$, we obtain a linear 2-functor $F:\mathfrak{C}\rightarrow \mathfrak{D}$. For every $C$ in $\mathfrak{C}$, and $D$ in $\mathfrak{D}$, there are natural equivalences \begin{align*}Hom_{\mathfrak{D}}(D,F(C))&\simeq Hom_{Fun(\mathfrak{D}^{1op},\mathbf{2Vect}_{\mathds{k}})}(\yo(D),\yo(F(C)))\\&\simeq Hom_{Fun(\mathfrak{D}^{1op},\mathbf{2Vect}_{\mathds{k}})}(\yo(D),Q(C))\\&\simeq (Q(C))(D) = P(D,C).\end{align*} This proves the result.
\end{proof}

\subsection{Compact Semisimple Tensor 2-Categories}

The following definitions are straightfoward categorifications of the definitions of tensor 1-category and of (multi)fusion 1-category recalled in \ref{sub:sepmod}.

\begin{Definition}
A tensor 2-category is a rigid monoidal linear 2-category.
\end{Definition}

\begin{Definition}
Over an algebraically closed field or a real closed field, a multifusion 2-category is a rigid monoidal finite semisimple 2-category. A fusion 2-category is a multifusion 2-category whose monoidal unit is a simple object.
\end{Definition}

The results of section 2 of \cite{D2} also apply to compact semisimple tensor 2-categories. We highlight the generalized versions of proposition 2.4.3 and proposition 2.4.7. We omit the proofs as they are virtually identical to those given in \cite{D2}.

\begin{Proposition}
Let $\mathcal{C}$ be a braided finite semisimple tensor 1-category. Then, $\mathbf{Mod}(\mathcal{C})$ is a compact semisimple tensor 2-category with monoidal structure given by the balanced Deligne tensor product $\boxtimes_{\mathcal{C}}$.
\end{Proposition}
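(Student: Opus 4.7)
The plan is to mimic the decategorification where, given a braided tensor 1-category $\mathcal{C}$, the 1-category $\mathrm{Mod}(\mathcal{C})$ of separable algebras in $\mathcal{C}$ inherits a tensor structure from the braiding. Categorifying, I would proceed as follows.

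First, I would use the braiding of $\mathcal{C}$ to promote any right $\mathcal{C}$-module 1-category $\mathcal{M}$ to a $\mathcal{C}$-$\mathcal{C}$-bimodule 1-category, with left action defined as the right action precomposed with the braiding (and coherence data coming from the hexagon axioms). This gives a 2-functor from $\mathbf{Mod}(\mathcal{C})$ into the 2-category of $\mathcal{C}$-$\mathcal{C}$-bimodule 1-categories. Composition of bimodules via the balanced Deligne tensor product $\boxtimes_{\mathcal{C}}$ then supplies a candidate monoidal structure on $\mathbf{Mod}(\mathcal{C})$, with monoidal unit $\mathcal{C}$ itself.

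Next, I would verify that $\boxtimes_{\mathcal{C}}$ preserves separability. Using Ostrik's theorem, write $\mathcal{M}\simeq \mathrm{Mod}_{\mathcal{C}}(A)$ and $\mathcal{N}\simeq \mathrm{Mod}_{\mathcal{C}}(B)$ for separable algebras $A$, $B$ in $\mathcal{C}$. The braiding turns $A\otimes B$ into an algebra in $\mathcal{C}$, and one gets $\mathcal{M}\boxtimes_{\mathcal{C}}\mathcal{N}\simeq \mathrm{Mod}_{\mathcal{C}}(A\otimes B)$ as right $\mathcal{C}$-module 1-categories. Since the tensor product of two separable algebras in a braided finite semisimple tensor 1-category is again separable (the multiplication of $A\otimes B$ splits via the tensor product of the splittings of $m_A$ and $m_B$, conjugated by the braiding), the result $\mathcal{M}\boxtimes_{\mathcal{C}}\mathcal{N}$ lies in $\mathbf{Mod}(\mathcal{C})$. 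Monoidal coherence (associator, pentagonator, unitors) follows from the standard coherence of the relative Deligne tensor product, precisely as in proposition 2.4.3 of \cite{D2}.

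Finally, to establish rigidity, I would exhibit explicit duals. Given $\mathcal{M}\simeq \mathrm{Mod}_{\mathcal{C}}(A)$, its dual is $\mathrm{Mod}_{\mathcal{C}}(A^{op})$, where $A^{op}$ denotes $A$ with the opposite multiplication obtained by precomposing $m_A$ with the braiding; separability of $A^{op}$ is immediate from that of $A$. The evaluation and coevaluation 1-morphisms are given by the canonical bimodule maps $A\otimes A^{op}\to A$ (multiplication, split by separability) and $\mathcal{C}\to A\boxtimes_{\mathcal{C}}A^{op}$, and the zig-zag axioms hold up to the usual 2-isomorphisms. The same argument, run symmetrically, supplies left duals. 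The main obstacle is really just the separability-preservation step, which is the key point where the braiding is used essentially; everything else is a formal categorification of the decategorified story, and so is omitted exactly as indicated by the reference to \cite{D2}.
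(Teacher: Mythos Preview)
Your proposal is correct and aligns with the paper's approach: the paper omits the proof entirely, stating that it is ``virtually identical'' to that of proposition 2.4.3 of \cite{D2}, and your sketch is precisely the standard argument that reference contains (braiding upgrades right modules to bimodules, $\boxtimes_{\mathcal{C}}$ corresponds to $A\otimes B$ on separable algebras and preserves separability, duals come from $A^{op}$). One minor quibble: your description of the evaluation and coevaluation as ``bimodule maps'' is imprecise---they are 1-morphisms in $\mathbf{Mod}(\mathcal{C})$, hence right $\mathcal{C}$-module functors, realized by the $(A^{op}\otimes A)$-$I$-bimodule $A$ (and symmetrically), not by algebra maps; the separability section is what makes the zig-zag 2-isomorphisms exist.
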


\begin{Proposition}
There is an equivalence between the category of connected compact semisimple tensor 2-categories and equivalence classes of monoidal linear 2-functors and the category of finite semisimple braided tensor 1-categories with simple monoidal unit and equivalence classes of braided tensor functors.
\end{Proposition}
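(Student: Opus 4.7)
The plan is to imitate proposition 2.4.7 of \cite{D2}, constructing a pair of mutually quasi-inverse functors. In the direction from braided tensor 1-categories to tensor 2-categories, I send a finite semisimple braided tensor 1-category $\mathcal{B}$ with simple unit to $\mathbf{Mod}(\mathcal{B})$ equipped with the $\boxtimes_{\mathcal{B}}$-monoidal structure supplied by the previous proposition. The monoidal unit is $\mathcal{B}$ viewed as a module 1-category over itself, which is indecomposable because its unit is simple, and therefore simple in $\mathbf{Mod}(\mathcal{B})$; moreover, because $\mathcal{B}$ is a generator of $\mathbf{Mod}(\mathcal{B})$, every simple object of $\mathbf{Mod}(\mathcal{B})$ receives a nonzero 1-morphism from $\mathcal{B}$, so this tensor 2-category is connected. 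On morphisms, a braided tensor functor $F:\mathcal{B}\to \mathcal{B}'$ is sent to the base-change 2-functor $\mathcal{M}\mapsto \mathcal{M}\boxtimes_{\mathcal{B}}\mathcal{B}'$, which is monoidal thanks to the braiding.

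In the opposite direction, given a connected compact semisimple tensor 2-category $\mathfrak{C}$ with monoidal unit $I$, connectedness forces $I$ to be simple, so I associate to $\mathfrak{C}$ the finite semisimple tensor 1-category $\Omega\mathfrak{C}:=\mathrm{End}_{\mathfrak{C}}(I)$. The Eckmann--Hilton argument applied to the two compatible monoidal structures on $\Omega\mathfrak{C}$ (horizontal composition from $\otimes$ on $\mathfrak{C}$, vertical composition from $\circ$ in $\mathfrak{C}$) promotes $\Omega\mathfrak{C}$ to a braided tensor 1-category whose monoidal unit $\mathrm{Id}_I$ is simple by Definition \ref{def:semisimple}. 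A monoidal linear 2-functor $\mathfrak{C}\to \mathfrak{D}$ restricts to a braided tensor functor between the corresponding endomorphism 1-categories of the units.

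To verify that these constructions are quasi-inverse, one direction is almost tautological: for a finite semisimple braided tensor 1-category $\mathcal{B}$ with simple unit, $$\Omega\mathbf{Mod}(\mathcal{B})=\mathrm{End}_{\mathbf{Mod}(\mathcal{B})}(\mathcal{B})\simeq \mathrm{Fun}_{\mathcal{B}}(\mathcal{B},\mathcal{B})\simeq \mathcal{B}$$ as tensor 1-categories, and a diagram chase shows the induced braiding coincides with the given one. In the other direction, given a connected compact semisimple tensor 2-category $\mathfrak{C}$, Theorem \ref{thm:semisimplemodule} applied to the generator $I$ yields an equivalence $\mathfrak{C}\simeq \mathbf{Mod}(\Omega\mathfrak{C})$ of semisimple 2-categories.

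The main obstacle will be upgrading this last equivalence to a monoidal equivalence—that is, showing that the equivalence $\mathfrak{C}\simeq\mathbf{Mod}(\Omega\mathfrak{C})$ provided by Theorem \ref{thm:semisimplemodule} intertwines $\otimes$ on $\mathfrak{C}$ with $\boxtimes_{\Omega\mathfrak{C}}$ on $\mathbf{Mod}(\Omega\mathfrak{C})$. Concretely, one needs, for each pair of objects $X,Y$ of $\mathfrak{C}$, a canonical equivalence of right $\Omega\mathfrak{C}$-module 1-categories $$\mathrm{Hom}_{\mathfrak{C}}(I,X)\boxtimes_{\Omega\mathfrak{C}}\mathrm{Hom}_{\mathfrak{C}}(I,Y)\xrightarrow{\simeq}\mathrm{Hom}_{\mathfrak{C}}(I,X\otimes Y),$$ natural and coherent in $X$ and $Y$, which follows from the universal property of the balanced Deligne tensor product together with the rigidity of the monoidal structure on $\mathfrak{C}$; this is precisely where the argument of proposition 2.4.7 of \cite{D2} does its real work, and the same strategy carries over verbatim to our more general compact setting.
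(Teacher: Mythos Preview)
Your proposal is correct and follows exactly the approach the paper endorses: the paper omits the proof entirely, stating that it is ``virtually identical'' to that of proposition 2.4.7 of \cite{D2}, which is precisely the argument you have sketched. Your identification of the key inputs---that Theorem~\ref{thm:semisimplemodule} applied to the simple generator $I$ furnishes the underlying equivalence $\mathfrak{C}\simeq\mathbf{Mod}(\Omega\mathfrak{C})$, and that the monoidal upgrade via the universal property of $\boxtimes_{\Omega\mathfrak{C}}$ is where the substance of \cite{D2} is invoked---is on target.
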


\begin{Example}
Let us work over $\mathbb{R}$, and examine the fusion rule of the fusion 2-categories $\mathbf{Mod}(\mathbf{Vect}_{\mathbb{R}})$ and $\mathbf{Mod}(\mathbf{Vect}_{\mathbb{C}})$. From example \ref{ex:vectRC}, we know that the latter has only one simple object, which is the monoidal unit, so the fusion rule is completely determined. We also know that the former has three equivalence classes of simple objects given by $\mathbf{Vect}_{\mathbb{R}}$, $\mathbf{Vect}_{\mathbb{C}}$, and $\mathbf{Vect}_{\mathbb{H}}$. Clearly, $\mathbf{Vect}_{\mathbb{R}}$ is the monoidal unit. Now, the remaining products are given by: $$\mathbf{Vect}_{\mathbb{C}}\boxtimes_{\mathbf{Vect}_{\mathbb{R}}}\mathbf{Vect}_{\mathbb{C}}\simeq \mathbf{Vect}_{\mathbb{C}}\boxplus \mathbf{Vect}_{\mathbb{C}},$$ $$\mathbf{Vect}_{\mathbb{C}}\boxtimes_{\mathbf{Vect}_{\mathbb{R}}}\mathbf{Vect}_{\mathbb{H}}\simeq\mathbf{Vect}_{\mathbb{H}}\boxtimes_{\mathbf{Vect}_{\mathbb{R}}}\mathbf{Vect}_{\mathbb{C}}\simeq \mathbf{Vect}_{\mathbb{C}} ,$$ $$\mathbf{Vect}_{\mathbb{H}}\boxtimes_{\mathbf{Vect}_{\mathbb{R}}}\mathbf{Vect}_{\mathbb{H}}\simeq \mathbf{Vect}_{\mathbb{R}}.$$
\end{Example}

\begin{Example}
Let us examine the structure of the compact semisimple tensor 2-category $\mathbf{Mod}(\mathbf{Vect}_{\mathbb{F}_p})$ for some fixed prime $p$. Equivalence classes of simple objects are classified by finite separable division algebras over $\mathbb{F}_p$. But, by the Wedderburn theorem, every such algebra is isomorphic to $\mathbb{F}_{p^q}$ for some positive integer $q$. This means that a set of representatives for the equivalence classes of simple objects of $\mathbf{Mod}(\mathbf{Vect}_{\mathbb{F}_p})$ is given by $\mathbf{Vect}_{\mathbb{F}_{p^q}}$ for all positive integer $q$. For any two positive integers $q\geq r$, we have $$\mathbf{Vect}_{\mathbb{F}_{p^q}}\boxtimes_{\mathbf{Vect}_{\mathbb{F}_p}} \mathbf{Vect}_{\mathbb{F}_{p^r}}\simeq \boxplus_{i=1}^{r}\mathbf{Vect}_{\mathbb{F}_{p^q}}.$$ Namely, $\mathbb{F}_{p^q}\otimes \mathbb{F}_{p^r}=\oplus_{i=1}^r \mathbb{F}_{p^q}$. A similar identity holds if $r\leq q$.
\end{Example}

\begin{Example}
Let $p$ be a prime number, and $\mathds{k}$ be an algebraically closed field such that $char(\mathds{k})\neq p$. We work with $\mathbf{Vect}(\mathbb{Z}/p)$, the fusion 1-category of finite dimensional $\mathbb{Z}/p$-graded $\mathds{k}$-vector spaces. A braiding $b$ on $\mathbf{Vect}(\mathbb{Z}/p)$ is completely determined by $$b_{\mathds{k}_1,\mathds{k}_1}:\mathds{k}_1\otimes\mathds{k}_1\rightarrow \mathds{k}_1\otimes\mathds{k}_1,$$ where $\mathds{k}_1$ is the one dimensional $\mathds{k}$-vector space concentrated in degree $1\in \mathbb{Z}/p$. Furthermore, the braiding axioms imply that $b_{\mathds{k}_1,\mathds{k}_1}$ has to be a $p$-th root of unity $\zeta$. Conversely, any choice of a $p$-th root of unity $\zeta$ describes a braiding on $\mathbf{Vect}(\mathbb{Z}/p)$ via the assignment $b_{\mathds{k}_1,\mathds{k}_1}=\zeta$.

On one hand, if $p=char(\mathds{k})$, then we have seen in example \ref{ex:FpalgZp} that the finite semisimple 2-category $\mathbf{Mod}(\mathbf{Vect}(\mathbb{Z}/p))$ has a unique equivalence class of simple objects given by $\mathbf{Vect}(\mathbb{Z}/p)$. Further, in this case, $1$ is the unique $p$-th root of unity, so there is a unique braiding $triv$ on $\mathbf{Vect}(\mathbb{Z}/p)$. We write $\mathcal{C}$ for the corresponding braided fusion 1-category. The fusion rule of $\mathbf{Mod}(\mathcal{C})$ is then completely described by $\mathbf{Vect}(\mathbb{Z}/p)\boxtimes_{\mathcal{C}}\mathbf{Vect}(\mathbb{Z}/p)\simeq \mathbf{Vect}(\mathbb{Z}/p)$, as $\mathbf{Vect}(\mathbb{Z}/p)$ is the monoidal unit.

On the other hand, if $p\neq char(\mathds{k})$, then we have seen how to describe the equivalence classes of simple of $\mathbf{Mod}(\mathbf{Vect}(\mathbb{Z}/p))$ in example \ref{ex:separablevectG}. In our current situation, we have two equivalence classes of simple objects, $\mathbf{Vect}(\mathbb{Z}/p)$ with the canonical right action (corresponding to the zero subgroup with the trivial cohomology class), and $\mathbf{Vect}$ with the trivial right action (corresponding to the group $\mathbb{Z}/p$ with the trivial cohomology class).

Let us denote by $\mathcal{C}$ the braided fusion 1-category obtained by giving $\mathbf{Vect}(\mathbb{Z}/p)$ the braiding specified by $1$ in $\mathds{k}$, and by $\mathcal{D}$ the braided fusion 1-category obtained by endowing $\mathbf{Vect}(\mathbb{Z}/p)$ with the braiding $\beta$ specified by a fixed primitive $p$-th root of unity $\zeta$ in $\mathds{k}$. We now describe the fusion rules of the associated compact semisimple tensor 2-categories. In either case, the simple object $\mathbf{Vect}(\mathbb{Z}/p)$ is the monoidal unit, so we only have to describe the product of $\mathbf{Vect}$ by $\mathbf{Vect}$. In $\mathbf{Mod}(\mathcal{C})$, we have $$\mathbf{Vect}\boxtimes_{\mathcal{C}}\mathbf{Vect}\simeq \boxplus_{i=1}^p \mathbf{Vect},$$ and in $\mathbf{Mod}(\mathcal{D})$, we have $$\mathbf{Vect}\boxtimes_{\mathcal{D}}\mathbf{Vect}\simeq \mathbf{Vect}(\mathbb{Z}/p).$$ This can be seen by generalizing the argument of section 2.5 of \cite{D2} to the characteristic $p$ case. Namely, the argument only relies on the field being algebraically closed.
\end{Example}

\appendix

\section{Weak Based Rings}
\setcounter{subsection}{1}

The goal of this part of the appendix is to state a version of proposition 3.4.6 of \cite{EGNO} that applies to weak based rings. As indicated in exercise 3.8.1, such a statement holds. We provide a proof for completeness. Further, we also prove lemma \ref{lem:Z+homfinite}, as it features prominently in the proof of our main theorem, and we have not been able to locate a proof in the literature. We begin by recalling the necessary definitions from \cite{EGNO} section 3.1 and section 3.8.

\begin{Definition}
A $\mathbb{Z}_+$-ring is a ring $A$, which is free as $\mathbb{Z}$-module, together with basis $\{b_i\}_{i\in I}$ such that $1=\sum_{i}a^ib_i$ for non-negative integers $a^i$ and, for every $i$, $j$ we have $b_ib_j = \sum_{k}c_{ij}^k b_k$ for non-negative integers $c_{ij}^k$.
\end{Definition}

Given a $\mathbb{Z}_+$-ring $A$, we write $I_0$ for the finite subset of $I$ corresponding to the non-zero $a^i$'s. We define $\tau:A\rightarrow \mathbb{Z}$ to be the group homomorphism given by $$\tau(b_i)=\begin{cases}1 \textrm{ if } i\in I_0\\0 \textrm{ otherwise.} \end{cases}$$

\begin{Definition}
A $\mathbb{Z}_+$-ring $A$ with basis $\{b_i\}_{i\in I}$ is called a weak based ring if there exists an involution $i\mapsto i^*$ of $I$ such that the induced map on $A$ is an anti-involution, and $$\tau(b_ib_j)=\begin{cases}t_i>0 \textrm{ if } i=j^*\\0  \textrm{ if } i\neq j^*. \end{cases}$$
If $t_i=1$ for all $i$, then we say that $A$ is a based ring.
\end{Definition}

The following example also appears in \cite{EG}.

\begin{Example}
The Grothendieck ring $Gr(\mathcal{C})$ of a finite semisimple tensor 1-category $\mathcal{C}$ with unit $I$ is a weak based ring with basis given by the equivalence classes of simple objects and involution induced by the dual functor. Namely, the usual proof shows that right and left duals agree up to a non-canonical isomorphism. Namely, for any simple object $C$ of $\mathcal{C}$, we have $$Hom_{\mathcal{C}}(C^*,{^*C})\cong Hom_{\mathcal{C}}(C\otimes C^*,I)\cong Hom_{\mathcal{C}}(I, C\otimes C^*)\cong Hom_{\mathcal{C}}(C,C).$$ For any simple objects $C$, $D$ in $\mathcal{C}$, there is an isomorphism $$Hom_{\mathcal{C}}(C^*\otimes D,I)\cong Hom_{\mathcal{C}}(D,C),$$ and, as the right hand-side is non-zero if and only if $C$ and $D$ are isomorphic, this shows that $Gr(\mathcal{C})$ is a weak based ring. Note that if $\mathds{k}$ is algebraically closed, then the Grothendieck ring of $\mathcal{C}$ is a based ring.
\end{Example}

\begin{Definition}
Let $A$, $C$ be two $\mathbb{Z}_+$-rings with bases $\{b_i\}_{i\in I}$, and $\{d_j\}_{j\in J}$ respectively. A homomorphism of $\mathbb{Z}_+$-rings $f:A\rightarrow C$ is a homomorphism of rings such that for every $i$, $f(b_i)=\sum_{j\in J}g^j_id_j$ for some non-negative integers $g^j_i$.
\end{Definition}

\begin{Definition}
Let $A$, $C$ be two weak based $\mathbb{Z}_+$-rings. A homomorphism of weak based $\mathbb{Z}_+$-rings $f:A\rightarrow C$ is a homomorphism of $\mathbb{Z}_+$-rings which preserves the involution.
\end{Definition}

\begin{Example}
A tensor functor $F:\mathcal{C}\rightarrow \mathcal{D}$ between two finite semisimple tensor 1-categories induces a homomorphism of weak based $\mathbb{Z}_+$-rings $Gr(F):Gr(\mathcal{C})\rightarrow Gr(\mathcal{D})$ by sending the equivalence class of the simple object $C$ to $F(C)$.
\end{Example}

\begin{Lemma}\label{lem:Z+homfinite}
The set of homomorphisms of weak based $\mathbb{Z}_+$-rings between two weak based $\mathbb{Z}_+$-rings of finite rank is finite.
\end{Lemma}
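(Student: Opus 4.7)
The plan is to bound the non-negative integer matrix $(g_i^j)_{i \in I, j \in J}$ defined by $f(b_i) = \sum_j g_i^j d_j$, which entirely determines the homomorphism $f$. Since each $g_i^j$ will take non-negative integer values and $I, J$ are finite, a uniform bound on these coefficients immediately yields the lemma.

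First, I will exploit the unit condition. Writing $1_A = \sum_{i \in I_0} a^i b_i$ and $1_C = \sum_{j \in J_0} c^j d_j$ with $a^i, c^j > 0$, the equality $f(1_A) = 1_C$ forces $\sum_{i \in I_0} a^i g_i^j = c^j$ for $j \in J_0$ and $\sum_{i \in I_0} a^i g_i^j = 0$ otherwise. This immediately gives $g_i^j = 0$ when $i \in I_0$ and $j \notin J_0$, and bounds $g_i^j \leq c^j/a^i$ in the remaining ranges, so finitely many choices remain for the $g_i^j$ with $i \in I_0$.

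For general $i \in I$, I will use the Frobenius–Perron dimension, defined on basis elements by $\mathrm{FPdim}(b_i) = \rho(L_{b_i})$ where $L_{b_i}$ denotes left multiplication, and satisfying $\mathrm{FPdim}(b_i) \geq 1$ by Proposition 3.3.4 of \cite{EGNO}. Since $f$ is a ring homomorphism, $p(f(b_i)) = 0$ in $C$ whenever $p$ is the minimal polynomial of $b_i$ in $A$; and because the regular representation $A \hookrightarrow \mathrm{End}(A)$ is faithful, $p$ coincides with the minimal polynomial of the operator $L_{b_i}$. Every eigenvalue of $L_{f(b_i)}$ is therefore a root of $p$ and hence an eigenvalue of $L_{b_i}$, giving
\[ \rho(L_{f(b_i)}) \;\leq\; \rho(L_{b_i}) \;=\; \mathrm{FPdim}_A(b_i). \]

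To extract bounds on the individual $g_i^j$, I will use a common Perron eigenvector. On a transitive $\mathbb{Z}_+$-ring the Frobenius–Perron dimension is a ring homomorphism to $\mathbb{R}$ (Proposition 3.3.6 of \cite{EGNO}), equivalently a simultaneous positive eigenvector $\phi$ of all transposed operators $L_{d_j}^T$ with eigenvalues $\mathrm{FPdim}_C(d_j)$. Applying this to $L_{f(b_i)}^T = \sum_j g_i^j L_{d_j}^T$ exhibits $\sum_j g_i^j \mathrm{FPdim}_C(d_j)$ as an eigenvalue of $L_{f(b_i)}$, hence bounded above by $\mathrm{FPdim}_A(b_i)$. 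Since $\mathrm{FPdim}_C(d_j) \geq 1$, each $g_i^j$ is then bounded uniformly by the constant $\mathrm{FPdim}_A(b_i)$, leaving only finitely many matrices $(g_i^j)$, and hence finitely many homomorphisms.

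The main technical obstacle will be the case when $C$ is not transitive as a $\mathbb{Z}_+$-ring, since then $\mathrm{FPdim}_C$ is not itself a ring homomorphism and no common Perron eigenvector exists globally. This will be resolved by first decomposing $C$ into its transitive components with respect to the $\mathbb{Z}_+$-structure, running the common-eigenvector argument on each component, and then combining the resulting bounds; the involution of a weak based ring is not essential to the argument and simply adds further symmetry constraints on the $g_i^j$.
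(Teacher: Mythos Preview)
Your Frobenius--Perron argument is clean and correct when $C$ is transitive, but the proposed fix for the non-transitive case does not work, and this is a genuine gap. Weak based rings need not decompose as direct sums of transitive $\mathbb{Z}_+$-rings: take $C=\mathrm{Mat}_2(\mathbb{Z})$ with basis $\{E_{ij}\}$, unit $E_{11}+E_{22}$, and involution $E_{ij}\mapsto E_{ji}$. This is an indecomposable (even simple) based ring, yet it is not transitive, since $E_{11}\cdot d$ never contains $E_{22}$ for any basis element $d$. Worse, the operator $L_{E_{12}}$ is nilpotent, so $\mathrm{FPdim}_C(E_{12})=0$; in particular Proposition~3.3.4 of \cite{EGNO}, which you invoke, requires transitivity and fails here. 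Consequently the inequality $\sum_j g_i^j\,\mathrm{FPdim}_C(d_j)\leq \mathrm{FPdim}_A(b_i)$, while still true, gives no control whatsoever over the coefficients $g_i^j$ attached to those $d_j$ with $\mathrm{FPdim}_C(d_j)=0$. Since the lemma is applied in the paper precisely to Grothendieck rings of multifusion categories such as $\mathrm{Fun}(\mathcal{M},\mathcal{M})$, this non-transitive situation is exactly the one that matters.

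Your remark that the involution is ``not essential'' is also off the mark: it is exactly the structure that rescues the argument. The paper's proof sets $b=\sum_i b_i$, writes $f(b)=\sum_j g^j d_j$, and computes $f(b^2)=\sum_l e^l d_l$ in two ways. The upper bound $\sum_l e^l\leq N\sum_j g^j$ with $N=\max_i n^i$ (where $b^2=\sum_i n^i b_i$) is elementary. For the lower bound one uses $b=b^*$ and the fact that $f$ preserves $*$ to get $g^{j}=g^{j^*}$, and then the weak based axiom $\tau(d_jd_{j^*})>0$ forces $d_jd_{j^*}\neq 0$, so that the $j=k^*$ terms in $f(b)f(b)=\sum_{j,k}g^jg^kd_jd_k$ already contribute $\sum_j (g^j)^2$ to $\sum_l e^l$. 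Combining the two inequalities gives $g^j\leq |J|\,N$ for every $j$, and hence a bound on each $g_i^j$. Note how this completely sidesteps spectral radii and transitivity; the involution and the positivity of $\tau(d_jd_{j^*})$ do all the work.
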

\begin{proof}
Let $A$, $C$ be two $\mathbb{Z}_+$-rings with bases $\{b_i\}_{i\in I}$, and $\{d_j\}_{j\in J}$ respectively. For any $j,k$ in $J$, we have $d_jd_k = \sum_{l\in J} c^l_{jk}d_l$ for some non-negative integers $c^l_{jk}$. We write $f:A\rightarrow C$ for some homomorphism $\mathbb{Z}_+$-rings. Let us define $b:=\sum_{i\in I}b_i$. We have $b^2 = \sum_{j\in I}n^ib_i$, for some non-negative integers $n_i$, and we write $N:=\mathrm{max}\{n_i\}$. Further, we also have $f(b_i)=\sum_{j\in J}g^j_id_j$ for some non-negative integers $g^j_i$. Summing over $i$, we write $f(b)=\sum_{j\in J}g^jd_j$ for some non-negative integers $g^j$. We now compute $f(b^2) = \sum_{j\in J}e^jd_j$ in two different ways. Firstly, we have $$f(b^2) = f(\sum_{i\in I}n^ib_i) = \sum_{i\in I, j\in J}g^j_i n^i d_j,$$ from which we find $\sum_{j\in J}e^j\leq N\sum_j g^j$. Secondly, we have $$f(b^2) = (\sum_{j\in J}g^jd_j)(\sum_{k\in J}g^kd_k)=\sum_{j,k,l}c^l_{jk}g^jg^kd_l.$$ As $f$ is a homomorphism of weak based $\mathbb{Z}_+$-rings, and $b = b^*$, we have $f(b)=f(b)^*$. This together with the above equation implies that $\sum_{j\in J}e^j\geq \sum_j (g^j)^2$. Putting both inequalities together, we have $\sum_j (g^j)^2\leq N\sum_j g^j$, which forces $g^j\leq |J| N$ for all $j$, whence the proof is complete.
\end{proof}

We now recall two definitions from section 3.4 of \cite{EGNO}.

\begin{Definition}
Let $A$ be a $\mathbb{Z}_+$-ring with basis $\{b_i\}_{i\in I}$. A $\mathbb{Z}_+$-module over $A$ is a right $A$-module $M$, which is free as $\mathbb{Z}$-module, together with a basis $\{m_l\}_{l\in L}$ such that for all $i$, $l$, we have $m_lb_i=\sum_k a^k_{il}m_k$ for some non-negative integers $a^k_{il}$.
\end{Definition}

\begin{Definition}
A $\mathbb{Z}_+$-module $M$ over $A$ is called indecomposable if it cannot be written as the direct sum of two non-zero $\mathbb{Z}_+$-modules over $A$. A $\mathbb{Z}_+$-module $M$ over $A$ is called irreducible if it no proper non-empty subset of the basis generates a proper submodule over $A$.
\end{Definition}

\begin{Example}
Let $\mathcal{C}$ be a finite semisimple tensor 1-category, and $\mathcal{M}$ a finite semisimple right $\mathcal{C}$-module 1-category. Then, the action of $Gr(\mathcal{C})$ on $Gr(\mathcal{M})$ the Grothendieck group of $\mathcal{M}$, endows $Gr(\mathcal{M})$ with the structure of $\mathbb{Z}_+$-module over $Gr(\mathcal{C})$. It is easy to see that $\mathcal{M}$ is indecomposable as a right $\mathcal{C}$-module 1-category if and only if $Gr(\mathcal{M})$ is indecomposable as a $\mathbb{Z}_+$-module over $Gr(\mathcal{C})$.
\end{Example}

The following lemma generalizes proposition 7.7.2 of \cite{EGNO}.

\begin{Lemma}
Let $\mathcal{M}$ be a finite semisimple right $\mathcal{C}$-module 1-category. Then, the $\mathbb{Z}_+$-module $Gr(\mathcal{M})$ over $Gr(\mathcal{C})$ is irreducible.
\end{Lemma}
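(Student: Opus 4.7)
My plan is to argue by contradiction. First, observe that the statement must implicitly assume $\mathcal{M}$ is indecomposable: if $\mathcal{M}$ decomposes non-trivially as $\mathcal{N}_1 \boxplus \mathcal{N}_2$, then the classes of simples of $\mathcal{N}_1$ form a proper non-empty basis subset generating the proper submodule $Gr(\mathcal{N}_1)$, so irreducibility would already fail. Thus I will assume $\mathcal{M}$ is indecomposable, which should presumably be part of the hypothesis.

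Given a non-empty subset $S$ of the basis of $Gr(\mathcal{M})$, let $T$ denote the set of classes of simple objects contained in the $Gr(\mathcal{C})$-submodule $\langle S \rangle \subseteq Gr(\mathcal{M})$ generated by $S$. I will write $\mathcal{N}, \mathcal{N}' \subseteq \mathcal{M}$ for the full sub-1-categories on direct sums of simples whose classes lie respectively in $T$ and in its complement $B \setminus T$. By construction $\mathcal{N}$ is stable under the right action of $\mathcal{C}$. The crucial step will be to show that $\mathcal{N}'$ is also $\mathcal{C}$-stable, and this is where rigidity of $\mathcal{C}$ is indispensable. Suppose for contradiction that some simple $Y$ with $[Y] \notin T$ and some object $C$ of $\mathcal{C}$ are such that $Y \otimes C$ admits a simple summand $X$ with $[X] \in T$. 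The adjunction $(- \otimes C) \dashv (- \otimes C^{*})$ on $\mathcal{M}$ coming from rigidity of $\mathcal{C}$ yields a non-zero map $X \otimes C^{*} \to Y$, so $Y$ appears as a summand of $X \otimes C^{*}$. But $X \otimes C^{*}$ lies in $\mathcal{N}$ by $\mathcal{C}$-stability of $\mathcal{N}$, forcing $[Y] \in T$, a contradiction.

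Having established that both $\mathcal{N}$ and $\mathcal{N}'$ are $\mathcal{C}$-module sub-1-categories, I obtain a splitting $\mathcal{M} \simeq \mathcal{N} \boxplus \mathcal{N}'$ of right $\mathcal{C}$-module 1-categories. Since $S$ is non-empty $\mathcal{N}$ is non-zero, and indecomposability of $\mathcal{M}$ then forces $\mathcal{N}' = 0$, so $T$ equals the entire basis and $\langle S \rangle = Gr(\mathcal{M})$. This shows $Gr(\mathcal{M})$ is irreducible. The main obstacle is precisely the rigidity step showing stability of $\mathcal{N}'$; without rigidity, the basis complement of a $\mathcal{C}$-stable set would not automatically be $\mathcal{C}$-stable, and the reduction to indecomposability would collapse.
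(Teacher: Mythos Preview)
Your argument is correct and follows essentially the same route as the paper: take the full subcategory $\mathcal{N}$ on the simples in a proper $Gr(\mathcal{C})$-stable basis subset, use the rigidity adjunction $Hom_{\mathcal{M}}(M\otimes C, N)\cong Hom_{\mathcal{M}}(M, N\otimes C^{*})$ to deduce that the complementary subcategory $\mathcal{N}^{\bot}$ is also $\mathcal{C}$-stable, and conclude a nontrivial decomposition of $\mathcal{M}$, contradicting indecomposability. Your explicit remark that indecomposability must be assumed is well taken (the paper's proof tacitly uses it when declaring the resulting splitting ``a contradiction''); the only cosmetic slip is the direction of the adjunction map, but since $Y$ is simple in a semisimple category a nonzero morphism in either direction already makes $Y$ a summand, so the conclusion stands.
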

\begin{proof}
Let $N$ be a proper $\mathbb{Z}_+$-submodule of $Gr(\mathcal{M})$ stable under $Gr(\mathcal{C})$. These conditions ensures that there exists a proper finite semisimple right $\mathcal{C}$-submodule 1-category $\mathcal{N}$ of $\mathcal{M}$ such that $Gr(\mathcal{N}) = N$. Similarly, if we write $N^{\bot}$ for the proper $\mathbb{Z}_+$-submodule of $Gr(\mathcal{M})$ spanned by the basis vector of $Gr(\mathcal{M})$ not in $N$, there exists a proper finite semisimple subcategory $\mathcal{N}^{\bot}$ of $\mathcal{M}$ such that $Gr(\mathcal{N}^{\bot})=N^{\bot}$. We claim that $\mathcal{N}^{\bot}$ is stable under the right action of $\mathcal{C}$. Provided this is true, we have exhibited $\mathcal{M}$ as a direct sum of the two non-trivial finite semisimple right $\mathcal{C}$-module 1-categories $\mathcal{N}$ and $\mathcal{N}^{\bot}$, a contradiction.

Let us now prove the claim. Let $C$ in $\mathcal{C}$, $M$ in $\mathcal{N}^{\bot}$, and $N$ in $\mathcal{N}$ be arbitrary objects, there is an isomorphism $$Hom_{\mathcal{M}}(M\otimes C,N)\cong Hom_{\mathcal{M}}(M,N\otimes {C^*}).$$ Thus, we see that $\mathcal{N}$ is stable under the action of $\mathcal{C}$ if and only if $\mathcal{N}^{\bot}$ is, which proves the claim.
\end{proof}

The next proposition is proposition 3.4.6 of \cite{EGNO} stated in the setting of weak based rings. We follow their proof very closely.

\begin{Proposition}\label{prop:weak346}
Let $A$ be a weak based ring of finite rank. Then there are only finitely many equivalences classes of irreducible $\mathbb{Z}_+$-modules over $A$.
\end{Proposition}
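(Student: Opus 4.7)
The strategy is to adapt the proof of proposition 3.4.6 of \cite{EGNO}, which handles based rings, to the weak based setting. The goal is to show that any irreducible $\mathbb{Z}_+$-module $M$ over $A$ is determined by a bounded amount of combinatorial data, namely the rank $|L|$ of its basis and the non-negative integer structure constants $a^k_{il}$; since both will turn out to be bounded by constants depending only on $A$, only finitely many equivalence classes of irreducible $\mathbb{Z}_+$-modules can exist.

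The setup uses the element $b := \sum_{i \in I} b_i \in A$, which satisfies $b^* = b$ because the involution permutes the finite set $I$. Let $L_b$ and $N_M$ denote the matrices of left multiplication by $b$ on $A$, respectively right multiplication by $b$ on $M$, expressed in the bases $\{b_i\}$ and $\{m_l\}$. I would first verify that the irreducibility of $M$ as a $\mathbb{Z}_+$-module forces $N_M$ to be irreducible as a non-negative matrix: for any $l, l' \in L$ there must exist a word $w = b_{i_1} \cdots b_{i_n}$ such that $m_{l'}$ appears as a summand of $m_l \cdot w$ with positive coefficient, which translates to $(N_M^n)_{l,l'} > 0$. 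Perron--Frobenius then provides a unique (up to scalar) positive eigenvector $(d_l)_{l \in L}$ of $N_M$ with positive eigenvalue $\rho_M$, and analogously $L_b$ admits a Perron eigenvalue $\rho_A > 0$ with positive eigenvector.

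The central step is to extract a Frobenius--Perron-type bound of the form $\sum_l d_l^2 \leq C(A)$ for a constant depending only on $A$. The plan is to obtain this from the Frobenius-reciprocity pairing $\langle x, y \rangle := \tau(xy^*)$ on $A_{\mathbb{R}}$, which under the weak based axioms is symmetric and has the orthogonality relation $\langle b_i, b_j \rangle = t_i \delta_{ij}$. Along the way, I would check that $t_i = t_{i^*}$, which follows from the fact that $1^* = 1$ forces $I_0$ to be stable under $*$ and hence $\tau(a^*) = \tau(a)$. Combining this pairing with a Cauchy--Schwarz argument applied to the action map yields the desired estimate, with the weights $t_i$ entering as multiplicative corrections. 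After normalising $(d_l)$ so that $d_l \geq 1$ for all $l$, the bound on $\sum_l d_l^2$ implies $|L| \leq C(A)$. Each individual structure constant is then controlled via the eigenvector identity $\sum_i \sum_k a^k_{il} d_k = \rho_M d_l$ combined with the lower bound on $d_k$, yielding $a^k_{il} \leq \rho_M d_l/d_k$. Since the $a^k_{il}$ are non-negative integers with boundedly many possible values and $|L|$ is bounded, only finitely many tuples of structure constants arise, whence finitely many irreducible $\mathbb{Z}_+$-modules up to equivalence.

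The main obstacle will be the Frobenius--Perron bound in the presence of the non-trivial weights $t_i$. In the based case, orthonormality of $\{b_i\}$ with respect to $\tau$ makes the estimate essentially immediate; for weak based rings one has to carry the weights through the Cauchy--Schwarz step and verify that the resulting inequality is still strong enough to bound $\sum_l d_l^2$ by a quantity depending only on $A$. A secondary subtlety is ensuring the uniform lower bound on the normalised $d_l$, which uses the $A$-integrality of the structure constants together with Perron--Frobenius.
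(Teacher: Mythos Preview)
Your plan differs substantially from the paper's argument, which avoids Perron--Frobenius theory entirely. The paper works directly with $b=\sum_i b_i$ and the integers $f_l^k$ defined by $m_l b=\sum_k f_l^k m_k$. One first checks that $\langle l\rangle:=\{k:f_l^k\neq 0\}$ already spans an $A$-submodule (using that every $b_i$ occurs in $b^2$, which only needs unitality), so irreducibility forces $\langle l\rangle=L$ and in particular $f_l^k\ge 1$ for all $k$. Choosing $l_0$ to minimise $f_l:=\sum_k f_l^k$ and expanding $m_{l_0}b^2$ in two ways then yields $f_{l_0}^2\le N f_{l_0}$, where $N:=\max_i n_i$ comes from $b^2=\sum_i n_i b_i$; hence $|L|\le f_{l_0}\le N$, and a second look at the same expansion bounds every $f_j^k$ by $N^2$. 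No eigenvectors, no pairings, and the weights $t_i$ never enter.

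The weak point in your plan is precisely the step you flag as the main obstacle. The pairing $\langle x,y\rangle=\tau(xy^*)$ lives on $A$, not on $M$, and you give no mechanism for transporting it to an inequality of the form $\sum_l d_l^2\le C(A)$; ``Cauchy--Schwarz applied to the action map'' is too vague to evaluate, and in fact the weights $t_i$ are irrelevant to the finiteness. If you insist on the Perron--Frobenius framework, the bound you need comes from a much more elementary source: the entrywise inequality $N_M^2\le N\cdot N_M$ (immediate from $b^2=\sum_i n_i b_i$) gives $\rho_M\le N$, and evaluating the eigenvector equation $\sum_k f_{l_0}^k d_k=\rho_M d_{l_0}$ at an index with $d_{l_0}=\min_l d_l=1$, combined with $f_{l_0}^k\ge 1$, yields $\sum_k d_k\le N$. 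This simultaneously bounds $|L|$ and each $d_l$, after which your inequality $a_{il}^k\le \rho_M d_l/d_k$ finishes. But once you unwind this you have reproduced the paper's double-counting at the minimising index, dressed in eigenvector language; the spectral machinery and the $\tau$-pairing are scaffolding that can be removed.
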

\begin{proof}
As $A$ has finite rank, so will any irreducible $\mathbb{Z}_+$-modules over $A$. Namely, let $M$ be an irreducible $\mathbb{Z}_+$-modules over $A$ with basis $\{m_l\}_{l\in L}$. Given any $l$ in $L$ and $i$ in $I$, we write $m_lb_i=\sum_{k\in L}f_{li}^km_k$ for some non-negative integers $f_{li}^k$. If we let $b:=\sum_{i\in I}b_i$, we have $m_lb=\sum_{k\in L}f_{l}^km_k$ with $f_l^k=\sum_{i\in I}f_{li}^k$, which is positive as $A$ has a unit. We let $\langle l\rangle:=\{k\in L|f_l^k\neq 0\}$, which is finite, and claim that the $\mathbb{Z}_{+}$-submodule of $M$ generated by $m_k$ with $k\in\langle l\rangle$ is stable under the action of $A$. Namely, for any $k$ in $\langle l\rangle$, $m_k$ appears in $m_lb$. On the other hand, $(m_lb)b=m_l(b^2)=\sum_{i\in I, k\in\langle l\rangle}n_if_{il}^k m_k$. This proves the claim. Further, as $M$ is irreducible, we have $L=\langle l\rangle$.

In $A$, we have $b^2 = \sum_{i \in I}n_ib_i$ for some non-negative integers $n_i$, and we define $N:=\mathrm{max}\{n_i\}$, which is a positive integer independent of $M$. We also set $f_l:=\sum_{k\in L}f_{l}^k$. Now, pick $l_0$ in $L$ such that $f_{l_0}$ is minimal amongst the $f_l$'s. We now compute $m_{l_0}b^2 = \sum_{k\in L}c_{k}m_k$ in two different ways. Firstly, we have $$(m_{l_0}b)b = \sum_{j,k\in L}f_{l_0}^jf_{j}^km_k,$$ from which we deduce $\sum_{k\in L}c_{k}\geq f_{l_0}^2$. Secondly, we have $$m_{l_0}(b^2) = \sum_{i\in I, k\in L} n_i f_{l_0i}^k m_k,$$ from which we get $Nf_{l_0}\geq \sum_{k\in L}c_{k}$. Putting everything together, we find $Nf_{l_0}\geq f_{l_0}^2$, and so $N\geq l_0$. As $L=\langle l_0\rangle$ because $M$ is irreducible, this bounds the rank of $M$. Moreover, we also get $N^2\geq \sum_{k\in L}c_k = \sum_{j,k\in L}f_{l_0}^jf_{j}^k$. As $f_{l_0}^j$ is positive for every $j$, there are only finitely many non-negative integers $f_{j}^k$ satisfying this inequality. This completes the proof of the proposition.
\end{proof}

\bibliography{bibliography.bib}

\end{document}